\newtheorem{assumption}{Assumption}
\newtheorem{lem}{Lemma}
\newtheorem{thm}{Theorem}
\begin{document}
	\title{Completely Parameter-Free Single-Loop Algorithms for Nonconvex-Concave Minimax Problems\footnotemark[1]}	
	\author{\name Junnan Yang \email yangjunnan@shu.edu.cn \\
		\addr Department of Mathematics\\
		Shanghai University\\
		Shanghai 200444, People's Republic of China
		\AND
		\name Huiling Zhang \email zhanghl@amss.ac.cn \\
		\addr Academy of Mathematics and Systems Science\\ 
		Chinese Academy of Sciences\\
		Beijing 100190, People's Republic of China
		\AND
		\name Zi Xu\footnotemark[2] \email xuzi@shu.edu.cn \\
		\addr Department of Mathematics\\
		Shanghai University\\
		Shanghai 200444, People's Republic of China\\
		and\\
		Newtouch Center for Mathematics of Shanghai University\\
		Shanghai University\\
		Shanghai 200444, People's Republic of China}
	
	\renewcommand{\thefootnote}{\fnsymbol{footnote}}
	\footnotetext[1]{This work is supported by National Natural Science Foundation of China under the grants 12471294 and the Postdoctoral Fellowship Program of CPSF under Grant Number GZB20240802 and 2024M763470.}
	\footnotetext[2]{Corresponding author.}
	\editor{My editor}
	\maketitle
	
	\begin{abstract}
		Due to their importance in various emerging applications, efficient algorithms for solving minimax problems have recently received increasing attention. However, many existing algorithms require prior knowledge of the problem parameters in order to achieve optimal iteration complexity. In this paper, three completely parameter-free single-loop algorithms, namely {PF-AGP-NSC} algorithm, {PF-AGP-NC} algorithm and {PF-AGP-NL} algorithm, are proposed to solve the smooth nonconvex-strongly concave, nonconvex-concave minimax problems and nonconvex-linear minimax problems respectively using line search without requiring any prior knowledge about parameters such as the Lipschtiz constant $L$ or the strongly concave modulus $\mu$. Furthermore, we prove that the total number of gradient calls required to obtain an $\varepsilon$-stationary point for the {PF-AGP-NSC} algorithm, the {PF-AGP-NC} algorithm, and the {PF-AGP-NL} algorithm are upper bounded by \ {$\mathcal{O}\left( L^2\kappa^3\varepsilon^{-2} \right)$}, {$\mathcal{O}\left( \log^2(L)L^4\varepsilon^{-4} \right)$}, and {$\mathcal{O}\left( L^3\varepsilon^{-3} \right)$}, respectively, where $\kappa$ is the condition number.
		To the best of our knowledge, PF-AGP-NC and PF-AGP-NL are the first completely parameter-free algorithms for solving nonconvex-concave and nonconvex-linear minimax problems, respectively. PF-AGP-NSC is a completely parameter-free algorithm for solving nonconvex-strongly concave minimax problems, achieving the best known complexity with respect to $\varepsilon$. Numerical results demonstrate the efficiency of the three proposed algorithms.
	\end{abstract}
	
	\begin{keywords}
		minimax optimization; parameter-free alternating gradient projection algorithm; iteration complexity; machine learning
	\end{keywords}
	
	\section{Introduction}
	We consider the following minimax optimization problem: 
	\begin{equation} \label{p}
		\min_{x \in \mathcal{X}} \max_{y \in \mathcal{Y}} f(x, y), \tag{P}
	\end{equation}
	where $\mathcal{X} \subseteq \mathbb{R}^n$ and $\mathcal{Y} \subseteq \mathbb{R}^m$ are non-empty closed compact convex sets, and $ f:\mathcal{X} \times \mathcal{Y} \rightarrow \mathbb{R} $ is a smooth function. In recent years, the minimax problems have attracted the attention of scholars in the cross-disciplinary fields of optimization and machine learning due to its wide application. Many practical problems can be modeled as \eqref{p}, such as generative adversarial networks (GAN) \citep{Arjovsky2017,Goodfellow2014,Maziar}, reinforcement learning \citep{Dai2018}, power control and transceiver design in signal processing \cite{Lu}, distributed non-convex optimization \citep{Giannakis,Giordano,Mateos}, robust optimization \citep{Ben-Tal2009,Gao2023}, statistical learning \citep{Giordano,ShafieezadehAbadeh2015}, etc. When $f(x, y)$ is (non)convex with respect to $x$ and (non)concave with respect to $y$, we call \eqref{p} the (non)convex-(non)concave minimax problem respectively.
	
	There are many research results on convex-concave minimax problems. The mirror neighbor method was proposed in \citep{Nemirovski2004}, and the dual extrapolation algorithm was proposed in \citep{Nesterov2007}, both of which achieve the optimal iteration complexity $\mathcal{O}(\varepsilon^{-1})$ for solving convex-concave minimax problems. For more related research results, please refer to \citep{Vandenberghe,Chen2014,Chen2017,Dang2014,Monteiro2010,Nedic2009,Yang2020} and their references.
	
	Recently, various algorithms have been proposed to solve the nonconvex-(strongly) concave minimax problems. For the nonconvex-strongly concave minimax problems, some recent studies \citep{Jin,Lin2019,Rafique,Lu} proposed various algorithms that achieve gradient complexity of $\tilde{\mathcal{O}}(\kappa_y^2 \varepsilon^{-2})$ for stationary points of $\Phi(\cdot) = \max_{y \in \mathcal{Y}} f(\cdot, y)$ when $\mathcal{X} = \mathbb{R}^n$ or for stationary points of $f$ where $\kappa_y$ is the condition number of $f(x, \cdot)$. An acceleration algorithm was proposed in \citep{Lin2020}, which improves the gradient complexity to $\tilde{\mathcal{O}}(\sqrt{\kappa_y} \varepsilon^{-2})$. In addition, Zhang et al. \citep{Zhang2021} proposed a general acceleration framework, which improves the iteration complexity to $\mathcal{O}\left(\sqrt{\kappa_y}\varepsilon ^{-2} \right)$.
	For general nonconvex-concave minimax problems, there are two types of algorithms, i.e., multi-loop algorithms and single-loop algorithms. Various multi-loop algorithms have been proposed in \citep{Kong,Lin2020,Nouiehed2019,Ostro,Rafique,Thekumparampil2019}. The best known iteration complexity of multi-loop algorithms for solving nonconvex-concave minimax problems is $\tilde{\mathcal{O}}( \varepsilon ^{-2.5})$, which was achieved by \citep{Lin2020}.  For solving nonconvex-linear minimax problems, Pan et al. \citep{Pan2021} proposed a new alternating gradient projection algorithm with the iteration complexity of $\mathcal{O}( \varepsilon ^{-3})$.
	Various single-loop algorithms have also been proposed  for solving nonconvex-concave minimax problems,  e.g., the gradient descent-ascent (GDA) algorithm \citep{Lin2019}, the hybrid block successive approximation (HiBSA) algorithm \citep{Lu}, the unified single-loop alternating gradient projection (AGP) algorithm \citep{xu2023unified}, and the smoothed GDA algorithm \citep{Zhang2020}. Both the AGP algorithm and the smoothed GDA algorithm achieve the best known iteration complexity, i.e., $\mathcal{O}( \varepsilon ^{-4})$, among single-loop algorithms.

	It should be noted that for most of the above existing results, achieving the optimal complexity of the algorithm requires assuming accurate information about certain parameters of the known problem, such as the Lipschitz constant or the strong concavity modulus $\mu$, etc. Accurately estimating these parameters is usually challenging, and conservative estimates will significantly affect the algorithm performance \citep{lan2023optimal}. Therefore, designing parameter-free algorithms that do not rely on these parameter inputs and have complexity guarantees has attracted widespread attention in recent years.

	For nonconvex-strongly concave minimax problems, there are two main types of parameter-free algorithms: one based on AdaGrad steps and the other based on line search. In the framework of AdaGrad steps, the NeAda algorithm was proposed in \citep{Yang2022}, which obtains an $\varepsilon$-stationary point with an iteration complexity of $\tilde{\mathcal{O}}(\varepsilon^{-2})$ and $\tilde{\mathcal{O}}(\varepsilon^{-4})$ in deterministic and stochastic settings, respectively. The TiAda algorithm was proposed in \citep{Li2022}, which obtains an $\epsilon$-stationary point with an iteration complexity of ${\mathcal{O}}(\varepsilon^{-2})$ and ${\mathcal{O}}(\varepsilon^{-4})$ in deterministic and stochastic settings, respectively. Based on the line search framework, a stochastic gradient descent with backtracking (SGDA-B) algorithm was proposed in \citep{Xu2024}, which has an upper bound of $\mathcal{O}(L\kappa^2\log(\kappa) \varepsilon^{-2})$ on the total number of gradient calls to obtain an $\varepsilon$-stationary point. A proximal alternating gradient descent (AGDA+) method was proposed in \citep{Zhang2024}  which achieves an upper bound of the total number of gradient calls of  $\mathcal{O}(\bar{L}^2\bar{\kappa}^4\log(\mathcal{R})\varepsilon^{-2})$, where $\bar{\kappa} = \bar{L}/\bar{\mu}$, $\bar{L} = \mathcal{R} \tilde{l}$ and $\bar{\mu} = \tilde{\mu}/\mathcal{R}$, $\mathcal{R} = \max\{ \tilde{\mu}/\mu, L/\tilde{l}, 1 \}$, $\tilde{\mu} \geq \mu$ and $\tilde{l} \leq L$ denote the initial estimate of the concavity modulus $\mu$ and the local Lipschitz constant $L$.
	
	For the nonconvex-concave minimax problems, a stochastic GDA method with backtracking (SGDA-B) is proposed in \citep{Xu2024}, which obtains an upper bound of $\tilde{\mathcal{O}}(L^3\varepsilon^{-4})$ on the total number of gradient calls to obtain an $\varepsilon$-stationary point.
	
	For nonconvex-linear minimax problems (a special class of nonconvex-concave minimax problems), before the results of this paper, there were no relevant results on parameter-free optimization algorithms with an iteration complexity of $\mathcal{O}(\varepsilon^{-3})$.
	
	\subsection{Contributions}
	Based on the line search framework, this paper proposes three completely parameter-free alternating gradient projection algorithms, namely PF-AGP-NSC algorithm, PF-AGP-NC algorithm and PF-AGP-NL algorithm, which are used to solve nonconvex-strongly concave, nonconvex-concave and nonconvex-linear minimax problems respectively. These algorithms do not require prior knowledge of any problem-related parameters, including the Lipschtiz constant $L$ and the strongly concave modulus $\mu$. Furthermore, we prove that the total number of gradient calls required to obtain an $\varepsilon$-stationary point for the {PF-AGP-NSC} algorithm, the {PF-AGP-NC} algorithm, and the {PF-AGP-NL} algorithm are upper bounded by \ {$\mathcal{O}\left( L^2\kappa^3\varepsilon^{-2} \right)$}, {$\mathcal{O}\left( \log^2(L)L^4\varepsilon^{-4} \right)$}, and {$\mathcal{O}\left( L^3\varepsilon^{-3} \right)$}, respectively.
	
	Table \ref{tab} provides a detailed comparison of the three proposed parameter-free optimization algorithms with existing parameter-free optimization algorithms for solving nonconvex-strongly concave (NC-SC), nonconvex-concave (NC-C), and nonconvex-linear (NC-L) minimax problems. Column ``\textbf{Completely PF}" indicates whether the algorithm is completely parameter-free (yes/no), and column ``\textbf{Prior Information}" specifies any prior knowledge or parameters required by the algorithm, where $\underline{S} = \min_{x\in\mathcal{X}}\max_{y\in \mathcal{Y}}f(x,y)$ and $ \mathcal{D}_y = \max\{\|y_1 - y_2\| \mid y_1, y_2 \in \mathcal{Y}\} $ and "None" indicates that no prior information is required. In ``Gradient Complexity" column, $\bar{\kappa}\geq \kappa$, $\mathcal{R}$ is a constant which depends on $L$ and $\mu$ and $\mathcal{R}\leq \kappa$.
	
	To the best of our knowledge, the PF-AGP-NL algorithm is the first completely parameter-free single-loop algorithm for solving nonconvex-linear minimax problems, achieving the best known gradient complexity within the single-loop algorithms. 
	The PF-AGP-NSC and the PF-AGP-NC algorithm is the first completely parameter-free single-loop algorithm for solving nonconvex-strongly concave and nonconvex-concave minimax problems, achieving the best known gradient complexity with respect to $\varepsilon$ with the existing single-loop algorithms respectively. 
	
	It should be pointed out that the complexity of the proposed PF-AGP-NSC and PF-AGP-NC algorithms is slightly worse than that of the SGDA-B algorithm in terms of the Lipschitz constant L and the condition number $\kappa$, but the SGDA-B algorithm requires some prior information about the remaining parameters and is not a completely parameter-free algorithm. If some other prior information is also allowed, the complexity of the proposed algorithm can also be improved. To better illustrate this point, if two additional parameters $\mu$ and $\underline{S}$ are known in advance, in Appendix \ref{app1} we propose a restarted variant of the PF-AGP-NSC algorithm, named \text{rPF-AGP-SC}, and prove that the gradient complexity of the algorithm is improved to $\mathcal{O}\left(\log(L) L\kappa^2\varepsilon^{-2}\right)$, which is better than that of the SGDA-B. 
	\begin{table}[]
		\label{tab}
		\resizebox{\textwidth}{!}{ %
			\begin{tabular}{|l|l|l|l|l|}
				\hline \textbf{Problems}
				& \textbf{Algorithms}  & \textbf{Gradient Complexity}                                                    & \textbf{Completely PF} & \textbf{Prior Information}       \\ \hline
				\multirow{5}{*}{\textbf{NC-SC}} & NeAda      & $\mathcal{O}(\log(\kappa)\kappa^{4.5}L^4 \varepsilon^{-2})$      & Yes           & None                    \\ \cline{2-5} 
				& TiAda      & $\mathcal{O}(\kappa^{10}\varepsilon^{-2})$                       & Yes           & None                    \\ \cline{2-5} 
				& SGDA-B     & $\mathcal{O}(L\kappa^2\log(\kappa)\varepsilon^{-2})$          & No            & $\mu$, $\underline{S}$ \\ \cline{2-5} 
				& AGDA+      & $\mathcal{O}(\bar{L}^2\bar{\kappa}^4\log(\mathcal{R})\varepsilon^{-2})$                       & Yes            & None                 \\ \cline{2-5} 
				& \textbf{PF-AGP-NSC} & $\mathcal{O} ( L^2\kappa^3\varepsilon^{-2})$                  & Yes           & None                    \\ \hline
				\multirow{2}{*}{\textbf{NC-C}}  & SGDA-B     & $\mathcal{O}(L^3\log({L}{\varepsilon^{-1}})\varepsilon^{-4})$ & No            & $ \mathcal{D}_y$        \\ \cline{2-5} 
				& \textbf{PF-AGP-NC}  & $\mathcal{O} ( \log^2(L) L^4\varepsilon^{-4})$                & Yes           & None                    \\ \hline
				\textbf{NC-L}                   & \textbf{PF-AGP-NL}  & $\mathcal{O}(L^3 \varepsilon^{-3})$                           & Yes           & None                    \\ \hline
			\end{tabular}
		}
		\caption{Comparison of Gradient Complexity of Existing Algorithms for Nonconvex-Concave Minimax Problems.}
	\end{table}
	
	\subsection{Organization}
	In Section \ref{secalg}, we propose two parameter-free single-loop algorithms, PF-AGP-NSC and PF-AGP-NC, for the nonconvex-strongly concave and nonconvex-concave minimax problems, and analyze their corresponding gradient complexities in two different settings. In Section \ref{secalg2}, we propose a parameter-free alternating gradient projection (PF-AGP-NL) algorithm for the nonconvex-linear minimax problem and analyze its corresponding gradient complexity. We report some numerical results in Section \ref{senu} and draw some conclusions in the last section.

	\noindent{\bfseries Notation}.
	For vectors, we use $\|\cdot\|$ to denote the $l_2$ norm. For a function $f(x, y): \mathbb{R}^n \times \mathbb{R}^m \rightarrow \mathbb{R}$, we use $\nabla_x f(x, y)$ (or $\nabla_y f(x, y)$) to denote the partial gradient of $f$ with respect to the first variable (or second variable) at the point $(x, y)$. Let $\mathcal{P}_\mathcal{X}$ and $\mathcal{P}_\mathcal{Y}$ denote the projections onto the sets $\mathcal{X}$ and $\mathcal{Y}$, respectively. Finally, we use the notation $\mathcal{O}(\cdot)$ to hide only absolute constants that do not depend on any problem parameters, and $\tilde{\mathcal{O}}(\cdot)$ to hide only absolute constants and logarithmic factors.
	A continuously differentiable function $f(\cdot)$ is called $\theta$-strongly convex if there exists a constant $\theta > 0$ such that for any $x, y \in \mathcal{X}$,
	\begin{equation}
		f(y) \geq f(x) + \langle \nabla f(x), y - x \rangle + \frac{\theta}{2}\|y - x\|^2. \label{sc}
	\end{equation}
	$f(\cdot)$ is called $\theta$-strongly concave if $-f$ satisfies (\ref{sc}).
	The symbolic function is represented as $\mbox{sgn}(x)$, that is:
	\begin{equation}
		\mbox{sgn}(x) =
		\begin{cases}
			1 & \text{if } x > 0, \\
			-1 & \text{if } x \leq 0.
		\end{cases}
	\end{equation}

	\section{\ {Completely parameter-free single-loop algorithms for nonconvex- (strongly) concave setting}}\label{secalg}
	In this section, we propose two completely parameter-free single-loop algorithms for solving the nonconvex-strongly concave and the nonconvex-concave minimax problems \eqref{p} respectively. Both algorithms do not require prior knowledge of Lipschitz constants, and the algorithm under nonconvex-strong concave settings also does not require prior knowledge of strong concavity coefficient. The two proposed algorithms are based on the AGP algorithm \citep{xu2023unified}, which uses the gradient of a regularized version of the original function, i.e.,
	\begin{equation*}
		f_k(x,y) = f(x,y) - \frac{c_k}{2}\|y\|^2,
	\end{equation*}
	where $c_k \geq 0$ is a regularization parameter. At the $k$-th iteration, AGP algorithm consists of the following two gradient projection steps for updating both $x$ and $y$:
	\begin{align*}
		x_{k+1}&= \mathcal{P}_\mathcal{X} \left( x_k - \tfrac{1}{\beta_k} \nabla _xf ( x_k,y_k ) \right),\\
		y_{k+1}&= \mathcal{P}_\mathcal{Y}\left( y_k+\tfrac{1}{\gamma_{k}} \nabla _y {f}( x_{k+1},y_k ) - \frac{1}{\gamma_{k}} c_{k} y_k \right),
	\end{align*}
	where  $\mathcal{P}_\mathcal{X}$ and $\mathcal{P}_\mathcal{Y}$ is the projection operator onto $\mathcal{X}$ and  $\mathcal{Y}$, respectively, and  $\beta_k > 0$, $\gamma_k >0$ are stepsize parameters.

	Although the AGP algorithm achieves the optimal iteration complexity dependence on $\varepsilon$ among single loop algorithms for nonconvex-(strongly) concave minimax problems, it requires the knowledge of the Lipschtiz constant $L$ and the strongly concave coefficient $\mu$, or at least knowledge of the Lipschtiz constant $L$  to calculate the stepsizes $\beta_k$ and $\gamma_k$ under the nonconvex-strongly concave setting and the nonconvex-concave setting, respectively, which limits its applicability. To eliminate the need for the prior knowledge of the Lipschtiz constant $L$ or the strongly concave coefficient $\mu$, we propose two parameter-free AGP algorithms by employing backtracking strategies for solving the nonconvex-strongly concave and the nonconvex-concave minimax problems, respectively. Similar backtracking strategies have been used in
	\citep{lan2023optimal,Liu2022,Nesterov2006,Nesterov2015}.

	\subsection{Nonconvex-strongly Concave Setting}
	We first propose a completely parameter-free AGP algorithm for solving nonconvex-strongly concave minimax problems.
	More specifically, in order to estimate the Lipschitz constants for $\nabla_x f(\cdot,y)$, $\nabla_y f(\cdot,y)$, $\nabla_y f(x,\cdot)$ denoted as $L_{11}$, $L_{12}$ and $L_{22}$, respectively, and the strongly concavity modulus  $\mu$, at each iteration of the proposed algorithm, we aim to find a tuple $(l_{11}^{k,i}, l_{12}^{k,i}, l_{22}^{k,i}, \mu_{k,i})$ by backtracking such that the following conditions for $(x_{k,i}, y_{k,i})$ are satisfied:
	\begin{align}
		C_1^{k,i} =& f(x_{k,i}, y_k) - f(x_k, y_k) - \langle \nabla_x f(x_k, y_k), x_{k,i} - x_k \rangle - \frac{l^{k,i}_{11}}{2} \|x_{k,i} - x_k\|^2 \leq 0\label{C1},\tag{C1} \nonumber\\
		C_2^{k,i} =& \|\nabla_y f(x_{k,i}, y_k) - \nabla_y f(x_k, y_k)\| - l_{12}^{k,i} \|x_{k,i} - x_k\| \leq 0,\label{C2}\tag{C2}\\
		C_3^{k,i} =&l_{22}^{k,i} \langle\nabla_y f(x_{k,i}, y_{k,i}) - \nabla_y f(x_{k,i}, y_{k,i}),y_{k,i}-y_k\rangle  \nonumber \\
		&+ \|\nabla_y f(x_{k,i}, y_{k,i}) - \nabla_y f(x_{k,i}, y_k)\|^2 \leq 0,\label{C3}\tag{C3}\\
		C_4^{k,i} =& \langle \nabla_y f(x_{k,i}, y_{k,i}) - \nabla_y f(x_{k,i}, y_k), y_{k,i} - y_k \rangle + \mu_{k,i} \|y_{k,i} - y_k\|^2 \leq 0.\label{C4}\tag{C4}
	\end{align}
	Otherwise, we enlarge $l_{11}^{k,i}, l_{12}^{k,i}, l_{22}^{k,i}$ by a factor of $2$ or reduce $\mu_{k,i}$ by half according to the sign of $C_1^{k,i}$, $C_2^{k,i}$, $C_3^{k,i}$ and $C_4^{k,i}$, respectively. We then update $\beta_k$ and $\gamma_{k}$ by the new estimates, i.e., $l_{11}^{k,i}, l_{12}^{k,i}, l_{22}^{k,i}, \mu_{k,i}$.
	The proposed algorithm for solving nonconvex-strongly concave problems, denoted as {PF-AGP-NSC},  is formally presented in Algorithm \ref{al_nc_sc}.
	
	\begin{algorithm}[t]
		\caption{A parameter-free alternating gradient projection ({PF-AGP-NSC}) algorithm for nonconvex-strongly concave minimax problems}
		\begin{algorithmic}
			\STATE \textbf{Step 1:} Input  $x_{1}, y_{1}$, $\beta_{0}$, $\gamma_{0}$, $l_{11}^{0}$, $l_{12}^{0}$, $l_{22}^{0}$, $\mu_0$; Set $k=1$.
			\STATE \textbf{Step 2:} {\bf Update $x_k$ and $y_k$}:
			\STATE\quad  \textbf{(a):}  Set $i=0$, $l_{11}^{k,i} = {l_{11}^{k-1}}{}$, $l_{12}^{k,i} = {l_{12}^{k-1}}$, $l_{22}^{k,i} = l_{22}^{k-1}$, $\mu_{k,i} = \mu_{k-1}$, $\beta_{k,i} = \beta_{k-1}$, $\gamma_{k,i} = \gamma_{k-1}$.
			\STATE\quad  \textbf{(b):}  Update $x_{{k,i}}$ and $y_{{k,i}}$: 
			\begin{align}
				x_{{k,i}} &=  \mathcal{P}_\mathcal{X}\left(x_{k} - \frac{1}{\beta_{k,i}} \nabla_x f(x_{k}, y_{k}) \right),\label{x_update}\\
				y_{{k,i}} &=  \mathcal{P}_\mathcal{Y}\left(y_{k} + \frac{1}{\gamma_{k,i}} \nabla_y f(x_{{k,i}}, y_{k})\right) \label{y_update}.
			\end{align}
			\STATE   \quad \textbf{(c):} Compute $ C_1^{k,i}$, $ C_2^{k,i}$, $ C_3^{k,i}$, $ C_4^{k,i}$ as in \eqref{C1}, \eqref{C2}, \eqref{C3}, \eqref{C4}, respectively;
			\STATE   \quad \textbf{(d):} Update $l^{k,i}_{11}$,  $l^{k,i}_{12}$,  $l^{k,i}_{22}$, $\mu_{k,i}$:
			\begin{align}
				l^{k,i+1}_{11}&=\frac{\mbox{sgn} (C_1^{k,i}) +3 }{2}  l^{k,i}_{11}, \quad 	l^{k,i+1}_{12}=\frac{\mbox{sgn} (C_2^{k,i}) +3 }{2}  l^{k,i}_{12},\\
				l^{k,i+1}_{22}&=\frac{\mbox{sgn} (C_3^{k,i}) +3 }{2}  l^{k,i}_{22}, \quad \mu_{k,i+1}=\frac{2}{\mbox{sgn} (C_4^{k,i}) +3 }  \mu_{k,i}.
			\end{align}
			\STATE   \quad \textbf{(e):} \textbf{If} $ C_1^{k,i}\leq 0$, $ C_2^{k,i}\leq 0$, $ C_3^{k,i}\leq 0$ and $ C_4^{k,i}\leq 0$,  \textbf{then} 
			\STATE  \qquad  \quad   $x_{k+1} = x_{k,i}$, $y_{k+1} = y_{k,i}$, $l_{11}^{k}=l_{11}^{k,i+1}$, $l_{12}^{k}=l_{12}^{k,i+1}$, $l_{22}^{k}=l_{22}^{k,i+1}$, $\mu_k = \mu_{k,i+1}$,  
			\STATE  \qquad  \quad$\beta_{k} = \beta_{k,i}$, $\gamma_{k} = \gamma_{k,i}$, go to \textbf{Step 3};\\ 	\STATE  \qquad  \quad  \textbf{Otherwise}, 
			
			$i = i+1$,  \ {$$\beta_{k,i}  ={l_{11}^{k,i}} + l_{12}^{k,i}  +  \frac{32(l_{12}^{k,i})^2(l^{k-1}_{12}+l^{k-1}_{22})}{\mu_{k,i}\mu_{k-1}},\quad \gamma_{k,i} = l_{12}^{k,i}+l_{22}^{k,i} ,$$ }
			\STATE  \qquad  \quad  go to \textbf{Step 2(b)}.
			\STATE \textbf{Step 3:} \textbf{If} some stationary condition is satisfied, \textbf{stop}; \textbf{Otherwise}, set $k = k + 1$, go to \textbf{Step 2}.
		\end{algorithmic}
		\label{al_nc_sc}
	\end{algorithm}
	
	It is worth noting that the parameter-free NeAda \citep{Yang2022} and TiAda \citep{Li2022} algorithms with adaptive step size are only used to solve the minimax problems when $x$ is unconstrained, while the  proposed {PF-AGP-NSC} algorithm can solve the constrained case. Compared with the SGDA-B \citep{Xu2024} algorithm and AGDA+ \citep{Zhang2024} algorithm using the backtracking framework under the nonconvex-strongly concave setting,  SGDA-B still requires knowledge of the strongly concavity modulus  $\mu$, while the proposed {PF-AGP-NSC} algorithm is a completely parameter-free algorithm that does not require any prior knowledge of the parameters. Moreover, the complexity of the  proposed PF-AGP-NSC is better than AGDA+.

	\subsubsection{Complexity analysis}\label{secom}
	In this subsection, we analyze the iteration complexity of Algorithm \ref{al_nc_sc} for solving nonconvex-strongly concave minimax optimization problems \eqref{p}, i.e., $f(x, y) $ is nonconvex with respect to $ x $ for any fixed $ y \in \mathcal{Y} $, and $ \mu $-strongly concave with respect to $ y $ for any given $ x \in \mathcal{X} $. We first need to make the following assumption about the smoothness of $ f(x, y) $.
	\begin{assumption}\label{as}
		$ f(x, y) $ has Lipschitz continuous gradients, i.e., there exist positive scalars $L_{11}, L_{12}, L_{22}$ such that for any $ x, \tilde{x} \in \mathcal{X}, y, \tilde{y} \in \mathcal{Y} $,
		\begin{align*}
			\| \nabla_x f(x, y) - \nabla_x f(\tilde{x}, y) \| &\leq L_{11} \| x - \tilde{x} \|, \\
			\| \nabla_y f(x, y) - \nabla_y f(\tilde{x}, y) \| &\leq L_{12} \| x - \tilde{x} \|,\\
			\| \nabla_y f(x, \tilde{y}) - \nabla_y f(x, y) \| &\leq L_{22} \| \tilde{y} - y \|. 
		\end{align*}
	\end{assumption}
	We denote  ${L} = \max\{L_{11}, L_{12}, L_{22}\}$. To analyze the convergence of Algorithm \ref{al_nc_sc}, we define the stationarity gap as the termination criterion as follows.
	\begin{definition}
		At each iteration of Algorithm \ref{al_nc_sc}, the stationarity gap for problem (\ref{p}) with respect to $ f(x, y) $ is defined as:
		\begin{equation}
			\nabla G(x_k, y_k) = \begin{bmatrix}
				\beta_k \left(x_k - \mathcal{P}_\mathcal{X} \left(x_k - \frac{1}{\beta_k} \nabla_x f(x_k, y_k)\right)\right) \\
				\gamma_k \left(y_k - \mathcal{P}_\mathcal{Y} \left(y_k + \frac{1}{\gamma_k} \nabla_y f(x_k, y_k)\right)\right)
			\end{bmatrix}.
		\end{equation}
		We denote $\nabla {G}_k= \nabla {G}(x_k, y_k)$,
		$(\nabla G_k)_{x}= \beta_k( x_k-\operatorname{\mathcal{P}_\mathcal{X}}( x_k-\frac{1}{\beta_k}\nabla _{x}f( x_k,y_k)))$,
		and $(\nabla G_k)_{y}=\gamma_k( y_k-\operatorname{\mathcal{P}_\mathcal{Y}}( y_k+ \frac{1}{\gamma_k} \nabla _{y}f( x_k,y_k )) )$.
		\label{gap}
	\end{definition}
	\begin{definition}
		At each iteration of Algorithm \ref{al_nc_sc}, the stationarity gap for problem (\ref{p}) with respect to $f_k(x, y)$ is defined as:
		$$
		\nabla \tilde{G}(x_k, y_k) = 
		\begin{bmatrix}
			\beta_k\left(x_k - P_\mathcal{X}\left(x_k - \frac{1}{\beta_k} \nabla_x f_k(x_k, y_k)\right)\right) \\
			\gamma_k \left(y_k - P_\mathcal{Y}\left(y_k + \frac{1}{\gamma_k} \nabla_y f_k(x_k, y_k)\right)\right) 
		\end{bmatrix}.
		$$ \label{d21}
	\end{definition}
	Under Assumption \ref{as}, we first establish the following lemma, which provides bounds on the changes in the function value when $x_k$ is updated at each iteration of Algorithm \ref{al_nc_sc}. This proof is similar to that of Lemma 2.1 in \citep{xu2023unified}, for the sake of completeness, we give its proof.

	\begin{lemma}
		\label{lemma:estimating_function_value_changes} Suppose that Assumption \ref{as} holds. 
		Let $\{(x_k, y_k)\}$ be a sequence generated by Algorithm \ref{al_nc_sc}, then we have
		\begin{equation}
			f(x_{k+1}, y_k) - f(x_k, y_k) \leq - \left(\beta_k - \frac{l^k_{11}}{2}\right) \|x_{k+1} - x_k\|^2 \label{es1}.
		\end{equation}
		\label{lem}
	\end{lemma}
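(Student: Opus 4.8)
The plan is to exploit the fact that the backtracking loop in Step~2 is exited only once all four conditions \eqref{C1}--\eqref{C4} hold, so in particular the accepted iterate $(x_{k+1},y_k)=(x_{k,i},y_k)$ satisfies $C_1^{k,i}\le 0$ with the stepsize $\beta_{k,i}=\beta_k$ and the estimate $l_{11}^{k,i}$ that was actually used in the test. Since $\mathrm{sgn}(C_1^{k,i})=-1$ whenever $C_1^{k,i}\le 0$, the update rule for $l_{11}$ in Step~2(d) gives $l_{11}^{k,i+1}=l_{11}^{k,i}$, hence the stored value $l_{11}^{k}=l_{11}^{k,i+1}$ coincides with the one appearing in \eqref{C1}. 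Assumption~\ref{as} is used only to guarantee that this inner loop terminates --- once $l_{11}^{k,i}\ge L_{11}$ the standard descent lemma forces $C_1^{k,i}\le 0$ --- so that the accepted iterate is well defined; it plays no further role in the estimate itself.

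First I would rewrite $C_1^{k,i}\le 0$ as the one-sided inequality
\begin{equation*}
f(x_{k+1},y_k)-f(x_k,y_k)\le \langle \nabla_x f(x_k,y_k),\,x_{k+1}-x_k\rangle+\frac{l_{11}^{k}}{2}\|x_{k+1}-x_k\|^2 .
\end{equation*}
Next I would bound the inner product using the variational characterization of the Euclidean projection: since $x_{k+1}=\mathcal{P}_\mathcal{X}\bigl(x_k-\tfrac{1}{\beta_k}\nabla_x f(x_k,y_k)\bigr)$ by \eqref{x_update}, we have $\bigl\langle x_k-\tfrac{1}{\beta_k}\nabla_x f(x_k,y_k)-x_{k+1},\,x-x_{k+1}\bigr\rangle\le 0$ for every $x\in\mathcal{X}$; taking $x=x_k$ and rearranging yields
\begin{equation*}
\langle \nabla_x f(x_k,y_k),\,x_{k+1}-x_k\rangle\le -\beta_k\|x_{k+1}-x_k\|^2 .
\end{equation*}
Substituting this into the previous display gives exactly \eqref{es1}.

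The argument is essentially bookkeeping, and I do not expect a genuine obstacle; the only point requiring a little care is the index alignment in the first paragraph --- that the Lipschitz estimate $l_{11}^{k}$ reported by the algorithm is the same quantity that certifies $C_1^{k,i}\le 0$, rather than the doubled value that would have been produced had the test failed. Once that is pinned down, the descent estimate follows from the projection inequality alone, with no appeal to smoothness beyond what is already encoded in condition \eqref{C1}.
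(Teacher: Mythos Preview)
Your proof is correct and follows essentially the same approach as the paper: both use the projection optimality condition for \eqref{x_update} to get $\langle \nabla_x f(x_k,y_k),x_{k+1}-x_k\rangle\le -\beta_k\|x_{k+1}-x_k\|^2$ and combine it with the descent inequality furnished by \eqref{C1}. Your extra care about the index alignment (that $l_{11}^k=l_{11}^{k,i+1}=l_{11}^{k,i}$ when $C_1^{k,i}\le 0$) and about Assumption~\ref{as} ensuring termination of the inner loop is well placed---the paper glosses over both points---but the core argument is identical.
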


	\begin{proof}
		By the optimality condition for \eqref{x_update}, we have
		\begin{equation}\label{lem:2.1:3}
			\langle  \nabla _xf\left( x_k,y_k \right) +\beta_k \left( x_{k+1}-x_k \right) ,x_k-x_{k+1} \rangle  \ge 0.
		\end{equation}
		The backtracking strategy \eqref{C1} implies that
		\begin{equation}
			f\left( x_{k+1},y_k \right) -f\left( x_k,y_k \right)\leq \langle  \nabla _xf\left( x_k,y_k \right) ,x_{k+1}-x_k \rangle  +\tfrac{l^k_{11}}{2}\| x_{k+1}-x_k \| ^2. \label{lem:2.1:4}
		\end{equation}
		By adding \eqref{lem:2.1:3} and \eqref{lem:2.1:4}, we obtain
		\begin{align*}
			f\left( x_{k+1},y_k \right) -f\left( x_k,y_k \right) &\le -\left( \beta_k -\tfrac{l^k_{11}}{2} \right) \| x_{k+1}-x_k \| ^2,
		\end{align*}
		which completes the proof. 
	\end{proof}


	
	Next, we provide an upper bound of the difference between $f(x_{k+1}, y_{k+1})$ and $f(x_k, y_k)$. 
	First, we need to make the following assumption on the parameter $\gamma_{k}$.
	\begin{assumption}
		$\{\gamma_k\}$ is a nonnegative monotonically increasing sequence. \label{a2}
	\end{assumption}
	\begin{lemma}
		Suppose that Assumptions \ref{as} and \ref{a2} hold. 
		Let $\{(x_k, y_k)\}$ be a sequence generated by Algorithm \ref{al_nc_sc}, \ {if $\gamma_k\ge l_{22}^k$}, then we have
		\begin{align}
			&f(x_{k+1}, y_{k+1}) - f(x_k, y_k) \nonumber\\
			\leq & -\left(\beta_k - \frac{l_{11}^k}{2} - \frac{l_{12}^{k}}{2}\right)\|x_{k+1} - x_k\|^2+\left(\frac{l^k_{12}}{2}+\frac{\gamma_{k-1}}{2}\right)\|y_{k+1} - y_k\|^2\nonumber \\ 
			&  -\left(\frac{\mu_{k-1}}{2} - \frac{\gamma_{k-1}}{2} \right)\|y_k - y_{k-1}\|^2  \label{34}.
		\end{align} \label{31}
	\end{lemma}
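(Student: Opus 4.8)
The plan is to split the one-step change into an $x$-block and a $y$-block,
$$f(x_{k+1},y_{k+1})-f(x_k,y_k)=\big(f(x_{k+1},y_k)-f(x_k,y_k)\big)+\big(f(x_{k+1},y_{k+1})-f(x_{k+1},y_k)\big),$$
and bound the two brackets separately. The $x$-block is handled immediately by Lemma~\ref{lem}, which gives $f(x_{k+1},y_k)-f(x_k,y_k)\le-(\beta_k-l_{11}^k/2)\|x_{k+1}-x_k\|^2$; a spare $\tfrac{l_{12}^k}{2}\|x_{k+1}-x_k\|^2$ produced by the $y$-analysis below will combine with this to give exactly the first term of \eqref{34}, since $-(\beta_k-\tfrac{l_{11}^k}{2})+\tfrac{l_{12}^k}{2}=-(\beta_k-\tfrac{l_{11}^k}{2}-\tfrac{l_{12}^k}{2})$. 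So all the real work is in the $y$-block.

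For the $y$-block I would first use that $f(x_{k+1},\cdot)$ is concave (the standing hypothesis of this subsection), in the form
$$f(x_{k+1},y_{k+1})-f(x_{k+1},y_k)\le\langle\nabla_y f(x_{k+1},y_k),\,y_{k+1}-y_k\rangle,$$
which carries no quadratic Lipschitz remainder, so nothing like $L_{22}$ ever enters. Then I would re-express the gradient driving the current $y$-step through the gradient that drove the previous one, via
$$\nabla_y f(x_{k+1},y_k)=\big(\nabla_y f(x_{k+1},y_k)-\nabla_y f(x_k,y_k)\big)+\big(\nabla_y f(x_k,y_k)-\nabla_y f(x_k,y_{k-1})\big)+\nabla_y f(x_k,y_{k-1}),$$
and pair each increment with $y_{k+1}-y_k$. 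The first increment is controlled by the backtracking condition \eqref{C2} together with Young's inequality, yielding $\tfrac{l_{12}^k}{2}\|x_{k+1}-x_k\|^2+\tfrac{l_{12}^k}{2}\|y_{k+1}-y_k\|^2$ (the spare $x$-term mentioned above). For the last increment I would invoke the first-order optimality condition of the $(k-1)$-st instance of the projection step \eqref{y_update}, tested at $y=y_{k+1}$, to get $\langle\nabla_y f(x_k,y_{k-1}),y_{k+1}-y_k\rangle\le\gamma_{k-1}\langle y_k-y_{k-1},y_{k+1}-y_k\rangle$, and then Young's inequality to split this as $\tfrac{\gamma_{k-1}}{2}\|y_{k+1}-y_k\|^2+\tfrac{\gamma_{k-1}}{2}\|y_k-y_{k-1}\|^2$. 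The middle increment is where the backtracking condition \eqref{C4} at iteration $k-1$ enters: it certifies $\langle\nabla_y f(x_k,y_k)-\nabla_y f(x_k,y_{k-1}),y_k-y_{k-1}\rangle\le-\mu_{k-1}\|y_k-y_{k-1}\|^2$, which (coupled with the same $(k-1)$-st optimality relation rather than with a crude Cauchy--Schwarz step) should be used to extract the negative contribution $-\tfrac{\mu_{k-1}}{2}\|y_k-y_{k-1}\|^2$ without producing any $\|y_{k+1}-y_k\|^2$ term. Summing the three contributions, folding the spare $x$-term into the Lemma~\ref{lem} estimate, and using Assumption~\ref{a2} together with $\gamma_k\ge l_{22}^k$ (so that the $y$-step stays consistent with the concavity-based chain along the run) to tidy coefficients gives \eqref{34}.

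The main obstacle is precisely the middle increment $\langle\nabla_y f(x_k,y_k)-\nabla_y f(x_k,y_{k-1}),\,y_{k+1}-y_k\rangle$. A naive Cauchy--Schwarz bound combined with \eqref{C3} would reintroduce the Lipschitz surrogate $l_{22}^{k-1}$ (not permitted in the final estimate) and, worse, would leak an additional $\|y_{k+1}-y_k\|^2$ contribution, breaking the $\tfrac{l_{12}^k}{2}+\tfrac{\gamma_{k-1}}{2}$ coefficient demanded by \eqref{34}; it would also fail to deliver the $-\mu_{k-1}\|y_k-y_{k-1}\|^2$ term with the correct sign and constant. The delicate point is therefore to combine \eqref{C4} at iteration $k-1$ with the $(k-1)$-st optimality condition so that only $\mu_{k-1}$ and $\gamma_{k-1}$ remain in front of $\|y_k-y_{k-1}\|^2$ and nothing is charged to $\|y_{k+1}-y_k\|^2$. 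Everything else — Lemma~\ref{lem} for the $x$-block, the application of \eqref{C2}, and the Young-type splittings — is routine.
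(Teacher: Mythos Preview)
Your overall architecture matches the paper's: split into $x$-block (Lemma~\ref{lem}) and $y$-block (concavity), then decompose $\nabla_y f(x_{k+1},y_k)$ into the three increments and pass the $\nabla_y f(x_k,y_{k-1})$ piece through the $(k{-}1)$-st optimality condition. The first increment via \eqref{C2} and Young is exactly what the paper does. The gap is precisely where you flag it---the middle increment---and your diagnosis of how to close it is off in two specific ways.

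First, your target for the middle increment is impossible. After you apply Young to $\gamma_{k-1}\langle y_k-y_{k-1},y_{k+1}-y_k\rangle$, matching \eqref{34} would force
\[
\langle \nabla_y f(x_k,y_k)-\nabla_y f(x_k,y_{k-1}),\,y_{k+1}-y_k\rangle \;\le\; -\tfrac{\mu_{k-1}}{2}\|y_k-y_{k-1}\|^2,
\]
which is false in general (take the two vectors orthogonal). The fix is not to sharpen the middle term in isolation but to \emph{couple it to the cross term} $\gamma_{k-1}\langle y_k-y_{k-1},y_{k+1}-y_k\rangle$. The paper sets $v_{k+1}:=(y_{k+1}-y_k)-(y_k-y_{k-1})$, writes the middle increment as
\[
\langle\,\cdot\,,v_{k+1}\rangle \;+\;\langle\,\cdot\,,y_k-y_{k-1}\rangle,
\]
bounds the $v_{k+1}$ piece by Young with weight $\gamma_{k-1}$ (producing $+\tfrac{\gamma_{k-1}}{2}\|v_{k+1}\|^2$ and $\tfrac{1}{2\gamma_{k-1}}\|\nabla_y f(x_k,y_k)-\nabla_y f(x_k,y_{k-1})\|^2$), and uses the \emph{exact} polarization identity for $\gamma_{k-1}\langle y_k-y_{k-1},y_{k+1}-y_k\rangle$ instead of Young, which supplies a matching $-\tfrac{\gamma_{k-1}}{2}\|v_{k+1}\|^2$. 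So replace your Young step on the cross term by the identity; that is what makes the bookkeeping close.

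Second, your instinct to avoid \eqref{C3} is backwards. On the $\langle\,\cdot\,,y_k-y_{k-1}\rangle$ piece the paper averages \eqref{C3} and \eqref{C4} at iteration $k{-}1$ to get
\[
\langle \nabla_y f(x_k,y_k)-\nabla_y f(x_k,y_{k-1}),\,y_k-y_{k-1}\rangle \;\le\; -\tfrac{1}{2l_{22}^{k-1}}\|\nabla_y f(x_k,y_k)-\nabla_y f(x_k,y_{k-1})\|^2 - \tfrac{\mu_{k-1}}{2}\|y_k-y_{k-1}\|^2.
\]
The $l_{22}^{k-1}$ you feared \emph{does} appear, but it is immediately eaten by the $\tfrac{1}{2\gamma_{k-1}}\|\cdot\|^2$ term from the $v_{k+1}$ bound, since $\gamma_{k-1}\ge l_{22}^{k-1}$---that is exactly what the hypothesis ``$\gamma_k\ge l_{22}^k$'' is for, and why it is stated in the lemma. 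With those two changes (the $v_{k+1}$ split plus the exact identity, and using \eqref{C3} together with \eqref{C4}), all auxiliary terms cancel and you land on \eqref{34}.
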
 
	
	\begin{proof}
		The optimality condition for $y_k$ in \eqref{y_update} implies that
		\begin{equation}
			\langle \nabla_y f(x_k, y_{k-1}) - \gamma_{k-1} (y_k - y_{k-1}), y_{k+1} - y_k \rangle \leq 0. \label{37}
		\end{equation}
		By the concavity of $f(x,y)$ with respect to $y$, and combining \eqref{37}, we have 
		\begin{align}
			&f(x_{k+1}, y_{k+1}) - f(x_{k+1}, y_k) \nonumber\\
			\leq & \langle \nabla_y f(x_{k+1}, y_k), y_{k+1} - y_k \rangle\nonumber\\
			\leq & \langle \nabla_y f(x_{k+1}, y_k) - \nabla_y f(x_k, y_{k-1}), y_{k+1} - y_k \rangle+ \gamma_{k-1}\langle y_k-y_{k-1}, y_{k+1} - y_k \rangle.\label{key}
		\end{align}
		Denoting $ v_{k+1} = (y_{k+1} - y_k) - (y_k - y_{k-1}) $, we can write the first inner product term in the r.h.s. of (\ref{key}) as
		\begin{align}
			&\langle \nabla_y f(x_{k+1}, y_k) - \nabla_y f(x_k, y_{k-1}), y_{k+1} - y_k \rangle  \nonumber\\
			=& \langle \nabla_y f(x_{k+1}, y_k) - \nabla_y f(x_k, y_k), y_{k+1} - y_k \rangle \nonumber\\
			& + \langle \nabla_y f(x_k, y_k) - \nabla_y f(x_k, y_{k-1}), v_{k+1} \rangle\nonumber\\
			& + \langle \nabla_y f(x_k, y_k) - \nabla_y f(x_k, y_{k-1}), y_k - y_{k-1} \rangle. \label{three}
		\end{align}
		Next, we estimate the three terms on the right-hand side of (\ref{three}) respectively. By the backtracking strategy \eqref{C2} and the Cauchy-Schwarz inequality, we have

		\begin{equation}
			\langle \nabla_y f(x_{k+1}, y_k) - \nabla_y f(x_k, y_k), y_{k+1} - y_k \rangle \leq \frac{l_{12}^k}{2} \|x_{k+1} - x_k\|^2 + \frac{l^k_{12}}{2} \|y_{k+1} - y_k\|^2 \label{C2_1}
		\end{equation}
		and
		\begin{equation}
			\langle \nabla_y f(x_k, y_k) - \nabla_y f(x_k, y_{k-1}), v_{k+1} \rangle \leq \frac{1}{2 \gamma_{k-1}} \|f(x_{k}, y_k) - \nabla_y f(x_k, y_{k-1})\| + \frac{\gamma_{k-1}}{2} \|v_{k+1}\| \label{C3_1}.
		\end{equation}
		By the backtracking strategy \eqref{C3} and \eqref{C4}, we obtain
		\begin{equation}
			\begin{split}
				&\langle \nabla_y f(x_k, y_k) - \nabla_y f(x_k, y_{k-1}), y_k - y_{k-1} \rangle \\
				\leq & -\frac{1}{2 l_{22}^{k-1}} \|f(x_{k}, y_k) - \nabla_y f(x_k, y_{k-1})\| 
				-\frac{\mu_{k-1}}{2} \|y_k - y_{k-1}\|^2.
			\end{split}
		\end{equation}
		Moreover, it can be easily checked that
		\begin{equation}
			\langle y_k - y_{k-1}, y_{k+1} - y_k \rangle = \frac{1}{2} \|y_k - y_{k-1}\|^2 + \frac{1}{2} \|y_{k+1} - y_k\|^2 - \frac{1}{2} \|v_{k+1}\|^2. \label{th}
		\end{equation}
		Plugging \eqref{three}-\eqref{th} into (\ref{key}) and rearranging the terms, we conclude that
		
		\begin{align}
			&f(x_{k+1}, y_{k+1}) - f(x_{k+1}, y_k) \nonumber \\
			\leq &  \frac{l^k_{12}}{2} \|x_{k+1} - x_k\|^2 + \left(  \frac{l^k_{12}}{2} + \frac{\gamma_{k-1}}{2}\right) \|y_k - y_{k-1}\|^2 - \left(  \frac{\mu_{k-1}}{2} - \frac{\gamma_{k-1}}{2}\right) \|y_{k+1} - y_k\|^2. \label{314}
		\end{align}
		By Lemma \ref{lem}, we have
		\begin{equation}
			f(x_{k+1}, y_k) - f(x_k, y_k) \leq - (\beta_k - \frac{l_{11}^{k}}{2}) \|x_{k+1} - x_k\|^2. \label{315}
		\end{equation}
		The proof is then completed by combining (\ref{314}) with (\ref{315}).
	\end{proof}
	
	We now establish an important recursion for Algorithm \ref{al_nc_sc}.
	\begin{lemma}
		Suppose that Assumptions \ref{as} and \ref{a2} hold. 
		Let $\{(x_k, y_k)\}$ be a sequence generated by Algorithm \ref{al_nc_sc}, and denote
		\begin{align}
			f_{k+1}& = f(x_{k+1}, y_{k+1}), \quad S_{k+1} = \frac{4\gamma^2_k }{\mu_k} \|y_{k+1} - y_k\|^2, \nonumber\\
			F_{k+1} & = f_{k+1} + S_{k+1} -  \frac{7\gamma_k}{2}  \|y_{k+1} - y_k\|^2-\frac{4\gamma_k^2}{\mu_k}  \mathcal{D}^2_y.  \label{lem3-1}
		\end{align}
		If $\gamma_k\ge l_{22}^k$, then we have
		\begin{align}
			F_{k+1} - F_k
			\leq& -\left(  2 \gamma_k - \frac{l_{12}^k}{2} \right) \|y_{k+1}-y_k\|^2\nonumber\\
			& - \left( \beta_k - \frac{l_{11}^k}{2} - \frac{l_{12}^k}{2}- \frac{16(l_{12}^k)^2\gamma_{k-1}}{\mu_{k-1}\mu_k} \right) \|x_{k+1} - x_k\|^2,\label{316}
		\end{align}
		where $ \mathcal{D}_y = \max\{\|y_1 - y_2\| \mid y_1, y_2 \in \mathcal{Y}\} $. \label{32}
	\end{lemma}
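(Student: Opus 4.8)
The idea is to expand $F_{k+1}-F_k$ into the increment $f_{k+1}-f_k$ plus the increments of the auxiliary quantities, bound $f_{k+1}-f_k$ by Lemma~\ref{31}, bound the increment of the quadratic term $S$ by a one-step estimate for $\|y_{k+1}-y_k\|$, and then regroup the squared-difference terms so that the coefficient of $\|y_k-y_{k-1}\|^2$ becomes nonpositive while the remaining coefficients reduce to \eqref{316}.

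First I would write out, from the definitions \eqref{lem3-1},
\begin{align*}
F_{k+1}-F_k={}&(f_{k+1}-f_k)+(S_{k+1}-S_k)-\tfrac{7\gamma_k}{2}\|y_{k+1}-y_k\|^2\\
&+\tfrac{7\gamma_{k-1}}{2}\|y_k-y_{k-1}\|^2-\Big(\tfrac{4\gamma_k^2}{\mu_k}-\tfrac{4\gamma_{k-1}^2}{\mu_{k-1}}\Big)\mathcal{D}_y^2 .
\end{align*}
Since $\{\gamma_k\}$ is nondecreasing (Assumption~\ref{a2}) and $\{\mu_k\}$ is nonincreasing (it is only ever halved in Step 2(d) of Algorithm~\ref{al_nc_sc}), the sequence $\{\gamma_k^2/\mu_k\}$ is nondecreasing, so the last ($\mathcal{D}_y^2$) term is $\le 0$ and may be discarded. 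For $f_{k+1}-f_k$ I would invoke Lemma~\ref{31} — its hypothesis $\gamma_k\ge l_{22}^k$ is exactly the one assumed here — and replace it by the right-hand side of \eqref{34}.

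Second, and this is the crux, I would derive a one-step recursion for $\|y_{k+1}-y_k\|$. Combining the optimality conditions of the $y$-update \eqref{y_update} at iterations $k$ and $k-1$ (the first tested against $y_k$, the second against $y_{k+1}$) gives
\begin{align*}
\gamma_k\|y_{k+1}-y_k\|^2\le{}&\langle\nabla_y f(x_{k+1},y_k)-\nabla_y f(x_k,y_{k-1}),\,y_{k+1}-y_k\rangle\\
&+\gamma_{k-1}\langle y_k-y_{k-1},\,y_{k+1}-y_k\rangle .
\end{align*}
Splitting $\nabla_y f(x_{k+1},y_k)-\nabla_y f(x_k,y_{k-1})$ as $[\nabla_y f(x_{k+1},y_k)-\nabla_y f(x_k,y_k)]+[\nabla_y f(x_k,y_k)-\nabla_y f(x_k,y_{k-1})]$, bounding the first bracket by $l_{12}^k\|x_{k+1}-x_k\|$ via \eqref{C2}, and combining the second bracket with the $\gamma_{k-1}(y_k-y_{k-1})$ term through the Lipschitz and strong-concavity estimates \eqref{C3} and \eqref{C4} at iteration $k-1$ (this is where the modulus $\mu_{k-1}$ enters and keeps the coefficient of $\|y_k-y_{k-1}\|$ small enough to be absorbed later), one obtains, after dividing out $\|y_{k+1}-y_k\|$, an estimate $\gamma_k\|y_{k+1}-y_k\|\le l_{12}^k\|x_{k+1}-x_k\|+c\,\gamma_{k-1}\|y_k-y_{k-1}\|$. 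Squaring this, applying a deliberately lopsided Young's inequality to the cross term (loading most of it onto $\|x_{k+1}-x_k\|^2$ with a weight of order $\gamma_{k-1}/\mu_{k-1}$, which is precisely why the quantity $\tfrac{16(l_{12}^k)^2\gamma_{k-1}}{\mu_{k-1}\mu_k}$ shows up and why $\beta_{k,i}$ in Step 2(e) is chosen larger than it), and multiplying by $4/\mu_k$, I would obtain
\begin{align*}
S_{k+1}\le{}&\tfrac{16(l_{12}^k)^2\gamma_{k-1}}{\mu_{k-1}\mu_k}\|x_{k+1}-x_k\|^2\\
&+\big(\text{a constant multiple of }\tfrac{\gamma_{k-1}^2}{\mu_{k-1}}\big)\|y_k-y_{k-1}\|^2 .
\end{align*}

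Finally I would substitute \eqref{34} and this bound on $S_{k+1}$ into the expansion of $F_{k+1}-F_k$ and collect coefficients: the $\|x_{k+1}-x_k\|^2$ coefficient becomes $-\big(\beta_k-\tfrac{l_{11}^k}{2}-\tfrac{l_{12}^k}{2}-\tfrac{16(l_{12}^k)^2\gamma_{k-1}}{\mu_{k-1}\mu_k}\big)$; the $\|y_{k+1}-y_k\|^2$ coefficient, being $-\tfrac{7\gamma_k}{2}$ plus a term $\le\tfrac{l_{12}^k}{2}+\tfrac{\gamma_{k-1}}{2}\le\tfrac{l_{12}^k}{2}+\tfrac{\gamma_k}{2}$, is $\le-(2\gamma_k-\tfrac{l_{12}^k}{2})$; and the $\|y_k-y_{k-1}\|^2$ coefficient gathers $+\tfrac{7\gamma_{k-1}}{2}$, $-\tfrac{4\gamma_{k-1}^2}{\mu_{k-1}}$ (from $-S_k$), the contribution from \eqref{34}, and the contribution from the $S_{k+1}$ bound, which I would show sums to something $\le 0$, so that this term vanishes and \eqref{316} remains. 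The main obstacle lies in this second step and this last verification: one must get the one-step estimate for $\|y_{k+1}-y_k\|$ with the coefficient on $\|y_k-y_{k-1}\|$ small enough — this is where strong concavity, through \eqref{C4}, is essential — that the $-S_k$ term hidden inside $-F_k$ can absorb it together with the extra $\tfrac{7\gamma_{k-1}}{2}$ contribution; the constants $4$ and $\tfrac{7}{2}$ in \eqref{lem3-1} and the precise form of $\beta_{k,i}$ are tuned so that this absorption works, using also the monotonicity of $\{\gamma_k\}$ and $\{\mu_k\}$.
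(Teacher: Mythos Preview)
Your overall architecture (expand $F_{k+1}-F_k$, plug in Lemma~\ref{31}, derive a one-step recursion controlling $S_{k+1}$, then collect coefficients) matches the paper. The gap is in how you propose to obtain the $S_{k+1}$ recursion. After combining the two optimality conditions you correctly reach
\[
\gamma_k\|y_{k+1}-y_k\|^2\le\langle\nabla_y f(x_{k+1},y_k)-\nabla_y f(x_k,y_{k-1}),\,y_{k+1}-y_k\rangle+\gamma_{k-1}\langle y_k-y_{k-1},\,y_{k+1}-y_k\rangle,
\]
but the plan to ``divide out $\|y_{k+1}-y_k\|$'' and then square cannot deliver the constants you need. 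Conditions \eqref{C3}--\eqref{C4} control $\langle\nabla_y f(x_k,y_k)-\nabla_y f(x_k,y_{k-1}),\,y_k-y_{k-1}\rangle$, not the inner product against $y_{k+1}-y_k$; the sharpest linear bound you can extract (even grouping the gradient difference with $\gamma_{k-1}(y_k-y_{k-1})$ and using \eqref{C4} inside the norm) is
\[
\gamma_k\|y_{k+1}-y_k\|\le l_{12}^k\|x_{k+1}-x_k\|+\sqrt{(l_{22}^{k-1})^2-2\gamma_{k-1}\mu_{k-1}+\gamma_{k-1}^2}\,\|y_k-y_{k-1}\|,
\]
and the second coefficient is below $\gamma_{k-1}$ only if $(l_{22}^{k-1})^2<2\gamma_{k-1}\mu_{k-1}$, which fails in the regime $\mu\ll L$. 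With $c\approx 2$ you get, after squaring and multiplying by $4/\mu_k$, a $\|y_k-y_{k-1}\|^2$ coefficient of order $16\gamma_{k-1}^2/\mu_k\ge 16\gamma_{k-1}^2/\mu_{k-1}$. But the absorbing budget is exactly $S_k=4\gamma_{k-1}^2/\mu_{k-1}$, plus the small residuals $\tfrac{7\gamma_{k-1}}{2}$ and $\tfrac{\gamma_{k-1}-\mu_{k-1}}{2}$ from the other pieces; no lopsided Young's inequality can close this factor-four shortfall (making $1+1/\lambda\le\tfrac14$ is impossible).

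The paper never divides. It keeps everything quadratic and uses the same $v_{k+1}=(y_{k+1}-y_k)-(y_k-y_{k-1})$ splitting that already appears in the proof of Lemma~\ref{31}: writing $y_{k+1}-y_k=v_{k+1}+(y_k-y_{k-1})$ turns the problematic inner product into $\langle\cdot,v_{k+1}\rangle+\langle\cdot,y_k-y_{k-1}\rangle$. The second piece is precisely where \eqref{C3}--\eqref{C4} bite, yielding a genuine $-\tfrac{\mu_{k-1}}{2}\|y_k-y_{k-1}\|^2$; the first is handled by Cauchy--Schwarz, and its $\tfrac{\gamma_{k-1}}{2}\|v_{k+1}\|^2$ cost cancels against the $+\tfrac{\gamma_{k-1}}{2}\|v_{k+1}\|^2$ coming from the polarization identity for $\gamma_{k-1}\langle y_k-y_{k-1},y_{k+1}-y_k\rangle$. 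This gives a squared recursion $\tfrac{\gamma_k}{2}\|y_{k+1}-y_k\|^2-\tfrac{\gamma_{k-1}}{2}\|y_k-y_{k-1}\|^2\le\cdots-\tfrac{\mu_{k-1}}{2}\|y_k-y_{k-1}\|^2$; multiplying by $8\gamma_{k-1}/\mu_{k-1}$ produces $S_{k+1}-S_k$ with a $-4\gamma_{k-1}\|y_k-y_{k-1}\|^2$ term, which exactly absorbs the $+\tfrac{7\gamma_{k-1}}{2}+\tfrac{\gamma_{k-1}}{2}=4\gamma_{k-1}$ left over from the other contributions. The $v_{k+1}$ decomposition at the squared level, not a linear estimate followed by squaring, is the missing idea.
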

	\begin{proof}
		By the optimality condition for $y_k$ in \eqref{y_update} we obtain
		\begin{equation}
			\langle \nabla_y f(x_{k+1}, y_{k}) - \gamma_{k} (y_{k+1} - y_{k}), y_{k} - y_{k+1} \rangle \leq 0. \label{36}
		\end{equation}
		Combining (\ref{36}) and (\ref{37}), we have
		\begin{align}
			&\langle \nabla_y f(x_k, y_{k-1}) - \nabla_y f(x_{k+1}, y_k)+\gamma_k( y_{k+1} - y_k)-\gamma_{k-1}( y_{k} - y_{k-1}), y_{k+1} - y_{k}\rangle \nonumber\\  
			&\leq  0,
		\end{align}
		which implies that
		\begin{align}
			&\gamma_k \|y_{k+1} - y_k\|^2 - \gamma_{k-1} \langle y_k-y_{k-1}, y_{k+1} - y_k \rangle\nonumber\\
			&\leq\langle \nabla_y f(x_{k+1}, y_k) - \nabla_y f(x_k, y_{k-1}), y_{k+1} - y_k \rangle.
		\end{align}
		Denoting $v_{k+1} = (y_{k+1}-y_k)-(y_k-y_{k-1})$, and using the identity $\frac{1}{2}\langle v_{k+1}, y_{k+1} - y_k\rangle = \frac{1}{2}\|y_{k+1} - y_k\|^2 + \frac{1}{2}\|v_{k+1}\|^2 - \frac{1}{2}\|y_k - y_{k-1}\|^2$, we conclude that
		\begin{align}
			&\gamma_k \|y_{k+1} - y_k\|^2 - \gamma_{k-1}\left( \frac{1}{2} \|y_{k+1} - y_{k}\|^2 + \frac{1}{2} \|y_k - y_{k-1}\|^2 - \frac{1}{2} \|v_{k+1}\|^2 \right)\nonumber  \\ &\quad\leq
			\langle \nabla_y f(x_{k+1}, y_k) - \nabla_y f(x_k, y_{k-1}), y_{k+1} - y_k \rangle.
		\end{align}
		By rearranging the terms of the above inequality, we then have
		\begin{align}
			&\left(\gamma_k - \frac{\gamma_{k-1}}{2}\right) \|y_{k+1} - y_k\|^2 + \frac{\gamma_{k-1}}{2} \|v_{k+1}\|^2 - \frac{\gamma_{k-1}}{2} \|y_k - y_{k-1}\|^2 \nonumber\\ 
			\leq &	\langle \nabla_y f(x_{k+1}, y_k) - \nabla_y f(x_k, y_{k-1}), y_{k+1} - y_k \rangle \nonumber \\
			=& \langle \nabla_y f(x_{k+1}, y_k) - \nabla_y f(x_k, y_k), y_{k+1} - y_k \rangle \nonumber \\
			&+\langle \nabla_y f(x_k, y_k) - \nabla_y f(x_k, y_{k-1}), v_{k+1} \rangle \nonumber\\
			& +\langle \nabla_y f(x_k, y_k) - \nabla_y f(x_k, y_{k-1}), y_k - y_{k-1} \rangle.  \label{Three}
		\end{align}
		By backtracking strategy \eqref{C2} and the Cauchy-Schwarz inequality, we can easily get that
		\begin{align}
			&\langle \nabla_y f(x_{k+1}, y_k) - \nabla_y f(x_k, y_k), y_{k+1} - y_k \rangle  \nonumber \\
			\leq & \frac{2(l_{12}^k)^2}{\mu_k} \|x_{k+1} - x_k\|^2 + \frac{\mu_k}{8} \|y_{k+1} - y_k\|^2.  \label{1}
		\end{align}
		Combining \eqref{C2_1}, \eqref{C3_1}, \eqref{Three} and \eqref{1}, we have
		\begin{align}
			& \frac{\gamma_k}{2} \|y_{k+1} - y_k\|^2 - \frac{\gamma_{k-1}}{2} \|y_k - y_{k-1}\|^2 \nonumber \\
			\leq & \frac{2(l_{12}^k)^2}{\mu_k} \|x_{k+1} - x_k\|^2 + \left(\frac{\mu_k}{8} + \frac{\gamma_{k-1}}{2}-\frac{\gamma_{k}}{2} \right) \|y_{k+1} - y_k\|^2 \nonumber\\
			& - \frac{\mu_{k-1}}{2} \|y_k - y_{k-1}\|^2.\label{ori}
		\end{align}
		By multiplying \ {$\frac{8\gamma_{k-1}}{\mu_{k-1}}$} on both sides of (\ref{ori}), using the definition of $S_{k+1}$ and Assumption \ref{a2}, we obtain
		\begin{equation}
			\begin{split}
				S_{k+1} - S_k 
				&\leq \frac{16(l_{12}^k)^2\gamma_{k-1}}{\mu_{k-1}\mu_k} \|x_{k+1} - x_k\|^2 + \gamma_{k-1} \|y_{k+1} - y_k\|^2  \\ \nonumber
				& \quad - {4\gamma_{k-1}}  \|y_k - y_{k-1}\|^2  + \left(\frac{4\gamma_k^2}{\mu_k} - \frac{4\gamma^2_{k-1}}{\mu_{k-1}}\right) \mathcal{D}_y^2.
			\end{split}
		\end{equation}
		The proof is then completed by Lemma \ref{31} and the definition.
	\end{proof}
	
	We are now ready to establish the iteration complexity for the {PF-AGP-NSC}  algorithm. In particular, letting $\nabla G(x_k, y_k)$ be defined as in Definition \ref{gap} and $\varepsilon > 0$ be a given target accuracy, we provide a bound on $T(\varepsilon)$, the first iteration index to achieve an $\varepsilon$-stationary point, i.e., $\|\nabla G(x_k, y_k)\| \leq \varepsilon$, which is equivalent to
	\begin{equation}
		T(\varepsilon) = \min\{k \mid \|\nabla G(x_k, y_k)\| \leq \varepsilon\}.
	\end{equation}
	\begin{theorem} \label{th1}
		Suppose that Assumptions \ref{as} and \ref{a2} hold. 
		Let $\{(x_k, y_k)\}$ be a sequence generated by Algorithm \ref{al_nc_sc}, {  $\beta_{k}  ={l_{11}^{k}} + l_{12}^{k}  +  \frac{32(l_{12}^{k})^2(l^{k-1}_{12}+l_{22}^{k-1})}{\mu_{k}\mu_{k-1}},  \gamma_{k} = l_{12}^{k}+l_{22}^{k}$},
		then it holds that
		\begin{equation}
			{T}(\varepsilon) \leq \frac{F_2 -\underline{F} }{d_1 \varepsilon^2} +1,
		\end{equation}
		where  $F_2$ is defined in \eqref{lem3-1}, 
		$\underline{F} = \underline{f} - \left( 14L_{12}+\frac{128L_{12}}{\mu} \right) \mathcal{D}^2_y
		$,
		with $ \mathcal{D}_y = \max\{\|y_1 - y_2\| \mid y_1, y_2 \in \mathcal{Y}\} $, $d_1 = \min\{\frac{3}{16l^1_{12}},\frac{\beta_{\text{min}}}{8L^2_{12}},\frac{1}{4\beta_{\max}}\}$, $\underline{f} = \min_{(x, y) \in X \times Y} f(x, y) $, $\beta_{\text{min}}={l^1_{11}} + {l^1_{12}} + \frac{32 (l_{12}^1)^2(l^1_{11}+l^1_{22})}{\mu_1^2 }$ and  $\beta_{\text{max}} = 2L_{11}+ 2{ L_{12}} + \frac{ 2^9L_{12}^2(L_{12}+L_{22})}{\mu^2}$.
	\end{theorem}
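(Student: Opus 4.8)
The plan is to feed the prescribed stepsizes into the one‑step recursion of Lemma~\ref{32}, simplify it, telescope, and then turn the accumulated decrease of $F_k$ into a bound on the first iteration at which $\|\nabla G_k\|\le\varepsilon$. Before anything else I would settle the bookkeeping on the backtracking‑generated parameters. For a fixed outer index $k$ the inner loop terminates after finitely many steps: by Assumption~\ref{as}, \eqref{C1} holds once $l_{11}^{k,i}\ge L_{11}$ and \eqref{C2} holds once $l_{12}^{k,i}\ge L_{12}$; since $-f(x,\cdot)$ is convex with $L_{22}$‑Lipschitz gradient, co‑coercivity gives $L_{22}\langle\nabla_y f(x,y_1)-\nabla_y f(x,y_2),y_1-y_2\rangle+\|\nabla_y f(x,y_1)-\nabla_y f(x,y_2)\|^2\le0$, so \eqref{C3} holds once $l_{22}^{k,i}\ge L_{22}$; and by $\mu$‑strong concavity \eqref{C4} holds once $\mu_{k,i}\le\mu$. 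Because a failed test only doubles the relevant $l$ (or halves $\mu$), the accepted estimates stay within $l_{11}^k\le\max\{l_{11}^1,2L_{11}\}$, $l_{12}^k\le\max\{l_{12}^1,2L_{12}\}$, $l_{22}^k\le\max\{l_{22}^1,2L_{22}\}$, $\mu_k\ge\min\{\mu_1,\mu/2\}$, each of the $l$'s is nondecreasing and $\mu_k$ nonincreasing in $k$. Hence $\gamma_k=l_{12}^k+l_{22}^k$ is nondecreasing (so Assumption~\ref{a2} holds automatically) and $\gamma_k\ge l_{22}^k$, so Lemmas~\ref{31}--\ref{32} apply along the whole trajectory; these bounds also produce the uniform constants $\beta_{\min}\le\beta_k\le\beta_{\max}$ of the statement.

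Next I would simplify the recursion. With $\gamma_{k-1}=l_{12}^{k-1}+l_{22}^{k-1}$ and $\beta_k=l_{11}^k+l_{12}^k+\tfrac{32(l_{12}^k)^2(l_{12}^{k-1}+l_{22}^{k-1})}{\mu_k\mu_{k-1}}$, a one‑line computation shows that the $x$‑coefficient in Lemma~\ref{32} collapses:
\[
\beta_k-\frac{l_{11}^k}{2}-\frac{l_{12}^k}{2}-\frac{16(l_{12}^k)^2\gamma_{k-1}}{\mu_{k-1}\mu_k}=\frac{\beta_k}{2},
\]
while $2\gamma_k-\tfrac{l_{12}^k}{2}\ge\tfrac32 l_{12}^k$. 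Therefore, for every $k\ge2$,
\[
F_k-F_{k+1}\ \ge\ \frac{\beta_k}{2}\,\|x_{k+1}-x_k\|^2+\frac{3l_{12}^k}{2}\,\|y_{k+1}-y_k\|^2 .
\]

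The third ingredient is the link between this decrease and $\nabla G_k$. Since $x_{k+1}$ and $y_{k+1}$ are exactly the projection steps \eqref{x_update}--\eqref{y_update} evaluated at the accepted $(\beta_k,\gamma_k)$, we have $\|(\nabla G_k)_x\|=\beta_k\|x_{k+1}-x_k\|$; and because $(\nabla G_k)_y$ differs from $\gamma_k(y_k-y_{k+1})$ only by replacing $\nabla_y f(x_{k+1},y_k)$ with $\nabla_y f(x_k,y_k)$ inside one projection, nonexpansiveness of $\mathcal P_\mathcal Y$ and Assumption~\ref{as} give $\|(\nabla G_k)_y\|\le\gamma_k\|y_{k+1}-y_k\|+L_{12}\|x_{k+1}-x_k\|$. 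Squaring and summing,
\[
\|\nabla G_k\|^2\ \le\ (\beta_k^2+2L_{12}^2)\,\|x_{k+1}-x_k\|^2+2\gamma_k^2\,\|y_{k+1}-y_k\|^2 .
\]
Using $\beta_{\min}\le\beta_k\le\beta_{\max}$, $l_{12}^k\ge l_{12}^1$ and the uniform upper bound on $\gamma_k$, one checks that the previous lower bound on $F_k-F_{k+1}$ dominates $d_1\|\nabla G_k\|^2$ with the $d_1$ of the statement: half of $\tfrac{\beta_k}{2}\|x_{k+1}-x_k\|^2$ absorbs $d_1\beta_k^2\|x_{k+1}-x_k\|^2$ (the $\tfrac1{4\beta_{\max}}$ term), the other half absorbs $2d_1L_{12}^2\|x_{k+1}-x_k\|^2$ (the $\tfrac{\beta_{\min}}{8L_{12}^2}$ term), and $\tfrac{3l_{12}^k}{2}\|y_{k+1}-y_k\|^2$ absorbs $2d_1\gamma_k^2\|y_{k+1}-y_k\|^2$ (the $\tfrac{3}{16l_{12}^1}$ term). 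Thus $F_k-F_{k+1}\ge d_1\|\nabla G_k\|^2$ for all $k\ge2$.

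Finally I would telescope this from $k=2$ to $k=T(\varepsilon)-1$, giving $\sum_{k=2}^{T(\varepsilon)-1}d_1\|\nabla G_k\|^2\le F_2-F_{T(\varepsilon)}$; the lower bound $F_{T(\varepsilon)}\ge\underline F$ follows from $f_{T(\varepsilon)}\ge\underline f$, $S_{T(\varepsilon)}\ge0$, and $\|y_{T(\varepsilon)}-y_{T(\varepsilon)-1}\|\le\mathcal D_y$ together with the uniform bounds $\gamma_{T(\varepsilon)-1}\le 2L_{12}+2L_{22}$ and $\mu_{T(\varepsilon)-1}\ge\mu/2$. Since $\|\nabla G_k\|>\varepsilon$ for every $k<T(\varepsilon)$ by definition of $T(\varepsilon)$, this yields $(T(\varepsilon)-2)d_1\varepsilon^2<F_2-\underline F$, hence $T(\varepsilon)\le\tfrac{F_2-\underline F}{d_1\varepsilon^2}+1$ (the case $T(\varepsilon)\le1$ being trivial). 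The main obstacle is the third step: one must reconcile the stationarity measure $\nabla G_k$, which is evaluated at the \emph{old} iterate $(x_k,y_k)$, with the descent quantities produced by an \emph{alternating} update in which the $y$‑step already uses $x_{k+1}$, and do this with constants depending only on the line‑search‑generated, uniformly bounded parameters rather than on the unknown $L$ and $\mu$; carefully splitting the $x$‑decrease to control both $(\nabla G_k)_x$ and the cross term is precisely where the chosen form of $\beta_k$ — and hence $d_1$, $\beta_{\min}$, $\beta_{\max}$ — is used.
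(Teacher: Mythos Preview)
Your proposal is correct and follows essentially the same approach as the paper: bound $\|\nabla G_k\|^2$ via \eqref{x_update}--\eqref{y_update} and nonexpansiveness of $\mathcal P_\mathcal Y$, compare with the descent of $F_k$ from Lemma~\ref{32}, telescope, and lower-bound the terminal $F$. The only cosmetic differences are that you make the backtracking bookkeeping (finite termination, monotonicity of $l^k_{\cdot}$ and $\mu_k$, the bounds $\beta_{\min}\le\beta_k\le\beta_{\max}$) explicit up front, and you simplify the $x$--coefficient in Lemma~\ref{32} to exactly $\beta_k/2$ before comparing with $d_1$, whereas the paper carries the full expression into a per-iteration constant $d_1^{(k)}$ and then asserts $d_1<d_1^{(k)}$; also you telescope to $T(\varepsilon)-1$ rather than $T(\varepsilon)$, which is arguably cleaner.
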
	
	\begin{proof}
		It follows immediately from Definition \ref{gap} and \eqref{x_update} that
		\begin{equation}
			\| (\nabla G_k)_x \| = \beta_k \| x_{k+1} - x_k \| \label{323}.
		\end{equation}
		On the other hand, by  the triangle inequality, we obtain that
		\begin{align}
			\| (\nabla G_k)_y \| 
			&\leq \gamma_k \left\| P_y \left(y_k + \frac{1}{\gamma_k} \nabla_y f(x_{k+1}, y_k)\right) - P_y \left(y_k + \frac{1}{\gamma_k} \nabla_y f(x_k, y_k)\right) \right\|\nonumber \\
			&\quad + \gamma_k \| y_{k+1} - y_k \|\nonumber \\
			&\leq L_{12} \| x_{k+1} - x_k \| + \gamma_k \| y_{k+1} - y_k \|.  \label{324}
		\end{align}
		By combining (\ref{323}) and (\ref{324}), and using the Cauchy-Schwarz inequality, we obtain
		\begin{align}
			\| \nabla G_k \|^2 &\leq (\beta_k^2 + 2 L_{12}^2) \| x_{k+1} - x_k \|^2 + 2 \gamma_k^2 \| y_{k+1} - y_k \|^2.
			\label{325}
		\end{align}
		Let $  d^{(k)}_1= {\min \left\{ \frac{2\gamma_k-\frac{l^k_{12}}{2}}{2 \gamma_{k}^2}, \frac{\beta_{k} - \frac{l^k_{11}}{2}-\frac{  l^k_{12}}{2} - \frac{16 (l^k_{12})^2 \gamma_{k-1}}{\mu_{k-1}\mu_k}}{\beta_{k}^2 + 2 L_{12}^2} \right\}}>0  $. Multiplying both sides of (\ref{325}) by $ d^{(k)}_1 $, and using (\ref{316}) in Lemma \ref{32}, we have
		\begin{align}
			d^{(k)}_1 \| \nabla G_k \|^2 & \leq F_k - F_{k+1}.
		\end{align}
		By using the definitions of $d_1$, $\beta_k$ and $\gamma_k$, we have
		$d_1<d^{(k)}_1$, then we get
		\begin{align}
			d_1 \| \nabla G_k \|^2 
			\leq F_k - F_{k+1}.
		\end{align}
		Summing up the above inequalities from $ k = 2 $ to $ k = T(\varepsilon) $, we obtain
		\begin{align}
			\sum_{k=2}^{T(\varepsilon)} d_1 \|\nabla G_k\|^2 &\leq F_2 - F_{T(\varepsilon)+1}  .
			\label{327}
		\end{align}
		Note that by the definition of $F_{k+1} $ in Lemma \ref{32}, we have
		\begin{align}
			F_{T(\varepsilon)+1} &= f_{T(\varepsilon)+1} + S_{T(\varepsilon)+1} -\frac{2\gamma_{T(\varepsilon)}^2}{\mu_{T(\varepsilon)}}   \mathcal{D}^2_y -  \frac{7}{2} \gamma_{T(\varepsilon)+1}  ||y_{T(\varepsilon)+1} - y_{T(\varepsilon)}||^2 \nonumber\\
			&\geq \underline{f} - \left( 14L_{12}+\frac{128L_{12}}{\mu} \right) \mathcal{D}^2_y =\underline{F},
		\end{align}
		where the inequality follows from the definitions of $\underline{f}$ and $\mathcal{D}_y$, and the fact that $S_k \geq 0$ ($\forall k \geq 1$). 
		We then conclude from (\ref{327}) that
		\begin{align}
			\sum_{k=2}^{T(\varepsilon)} d_1 \|\nabla G_k\|^2 &\leq {F_2} - F_{T(\varepsilon)+1} \leq {F}_2 - \underline{F}  ,
		\end{align}
		which, in view of the definition of $T(\varepsilon)$, implies that
		\begin{equation}
			d_1 (T(\varepsilon)-1) \varepsilon^2 \leq F_2 - \underline{F} ,
		\end{equation}
		or equivalently,
		\begin{equation}
			T(\varepsilon) \leq \frac{F_2 -  \underline{F}}{d_1 \varepsilon^2}+1.
		\end{equation}
		We complete the proof.
	\end{proof}
	\begin{remark} 
		In PF-AGP-NSC algorithm, the \ {total} number of backtracking step is upper bounded by \ {$\sum\limits_{k=2}^{T(\varepsilon)} (\log_2\left(\frac{l^k_{11}}{l_{11}^{k-1}}\right)+\log_2\left(\frac{l^{k}_{12}}{l_{12}^{k-1}}\right)+\log_2\left(\frac{l^{k}_{22}}{l_{22}^{k-1}}\right)+\log_2\left(\frac{\mu_{k-1}}{\mu_{k}}\right))=\log_2\left(\frac{2L_{11}}{l_{11}^{1}}\right)+\log_2\left(\frac{2L_{12}}{l_{12}^{1}}\right)+\log_2\left(\frac{2L_{22}}{l_{22}^{1}}\right)+\log_2\left(\frac{2\mu_1}{\mu}\right)=O({\log(\kappa}))$}.
		According to  Theorem \ref{th1}, \ { $d_1 = O(\frac{1}{L\kappa^2})$, $\underline{F}= O(L\kappa)$ }, the total number of  gradient calls of  PF-AGP-NSC to obtain an $\varepsilon$-stationary point  that satisfies $\|\nabla G(x_k,y_k)\|\leq \varepsilon$ is upper bounded by \ {$\mathcal{O} ( L^2\kappa^3\varepsilon^{-2})$} under the nonconvex-strongly concave setting, where $\kappa$ is the condition number.
	\end{remark}
	\begin{remark}
		If additional information is available, the iteration complexity of the algorithm can be further improved. To illustrate this point, if $\mu$ and $\underline{S}$ are known in advance, in Appendix \ref{app1} we propose a restarted variant of the PF-AGP-NSC algorithm, named \text{rPF-AGP-NSC}, and show that the gradient complexity of the algorithm can be improved to $\mathcal{O}\left(\log(L) L\kappa^2\varepsilon^{-2}\right)$, which is better than SGDA-B.
	\end{remark}
	
	\subsection{Nonconvex-concave Setting}
	\begin{algorithm}[t]
		\caption{A parameter-free alternating gradient projection ({PF-AGP-NC}) algorithm for nonconvex-concave minimax problems}
		\begin{algorithmic}
			\STATE \textbf{Step 1:} Input  $x_{1}, y_{1}$, $\beta_{0}$, $\gamma_{0}$, $c_{0}$, $l_{11}^{0}$, $l_{12}^{0}$, $l_{22}^{0}$; Set $k=1$.
			\STATE \textbf{Step 2:} {\bf Updata $x_k$ and $y_k$}:
			\STATE\quad  \textbf{(a):}  Set $i=0$,\ { $l_{11}^{k,i} = {l_{11}^{k-1}}$, $l_{12}^{k,i} = {l_{12}^{k-1}}$}, $l_{22}^{k,i} = l_{22}^{k-1}$, $\beta_{k,i} = \beta_{k-1}$, $\gamma_{k,i} = \gamma_{k-1}$, $c_{k,i} =c_{k-1}$.
			\STATE\quad  \textbf{(b):}  Update $x_{{k,i}}$ and $y_{{k,i}}$: 
			\begin{align}
				x_{{k,i}} &=  \mathcal{P}_\mathcal{X}\left(x_{k} - \frac{1}{\beta_{k,i}} \nabla_x f(x_{k}, y_{k}) \right),\\
				y_{{k,i}} &=  \mathcal{P}_\mathcal{Y}\left(y_{k} + \frac{1}{\gamma_{k,i}} \nabla_y f(x_{{k,i}}, y_{k})-\frac{1}{\gamma_{k,i}}{c_{k,i}}y_{k}\right) \label{y_update2}.
			\end{align}
			\STATE   \quad \textbf{(c):} \ {Compute $ C_1^{k,i}$, $ C_2^{k,i}$, $ C_5^{k,i}$, as in \eqref{C1}, \eqref{C2}, \eqref{NC3}};
			\STATE   \quad \textbf{(d):} Update $l^{k,i}_{11}$,  $l^{k,i}_{12}$,  $l^{k,i}_{22}$:
			\begin{align}
				l^{k,i+1}_{11}&=\frac{\mbox{sgn} (C_1^{k,i}) +3 }{2}  l^{k,i}_{11},\quad
				l^{k,i+1}_{12}=\frac{\mbox{sgn} (C_2^{k,i}) +3 }{2}  l^{k,i}_{12},\\l^{k,i+1}_{22}&=\frac{\mbox{sgn} (C_5^{k,i}) +3 }{2}  l^{k,i}_{22}.
			\end{align}
			\STATE   \quad \textbf{(e):}\ { \textbf{If} $ C_1^{k,i}\leq 0$, $ C_2^{k,i}\leq 0$ and $ C_5^{k,i}\leq 0$},  \textbf{then} 
			\STATE  \qquad  \quad   $x_{k+1} = x_{k,i}$, $y_{k+1} = y_{k,i}$, $l_{11}^{k}=l_{11}^{k,i+1}$, $l_{12}^{k}=l_{12}^{k,i+1}$, $l_{22}^{k}=l_{22}^{k,i+1}$,  $\beta_{k} = \beta_{k,i}$,
			\STATE  \qquad  \quad $\gamma_{k} = \gamma_{k,i}$,  $c_k = c_{k,i}$, go to \textbf{Step 3};\\ 	\STATE  \qquad  \quad  \textbf{Otherwise}, $i = i+1$, 
			\begin{align}
				\beta_{k,i} = \frac{l_{12}^{k,i}}{20l^{k-1,i}_{22}} + \frac{2(l_{12}^{k,i})^2 \sqrt{k}}{l_{12}^{k-1,i}} 
				,\quad \gamma_{k,i} =20l^{k,i}_{22}, \quad c_{k,i} = \frac{19l^{k,i}_{22}}{k^{1/4}},
			\end{align}
			\STATE  \qquad  \quad go to \textbf{Step 2(b)}.
			\STATE \textbf{Step 3:} \textbf{If} some stationary condition is satisfied, \textbf{stop}; \textbf{Otherwise}, set $k = k + 1$, go to \textbf{Step 2}.
		\end{algorithmic}
		\label{al_nc_c}
	\end{algorithm} 
	In this subsection, we propose a completely parameter-free AGP algorithm for solving nonconvex-concave minimax problems. Similar to the idea of the {PF-AGP-NSC} Algorithm, we estimate the Lipschitz constants $L_{11}$, $L_{12}$ and $L_{22}$ iteratively, however we do not need to estimate the strongly concavity modulus  $\mu$.  At each iteration of the proposed algorithm, we aim to find a tuple $(l^{k,i}_{11},l^{k,i}_{12})$ by backtracking such that 
	conditions \eqref{C1}, \eqref{C2}.
	The proposed algorithm, denoted as {PF-AGP-NC}, is formally presented in Algorithm \ref{al_nc_c}.
	
	Please note that the existing SGDA-B algorithm \citep{Xu2024} for solving nonconvex-concave minimax problems requires the realization of the diameter of the feasible set $\mathcal{Y}$ in order to construct the key auxiliary problem, while the proposed {PF-AGP-NC} algorithm is a completely parameter-free algorithm that does not require any prior parameters.
	\subsubsection{Complexity Analysis}
	In this subsection, we analyze the iteration complexity of Algorithm \ref{al_nc_c} for solving (\ref{p}) under the general nonconvex-concave setting. 
	Similar to the idea of the {PF-AGP-NSC} Algorithm, we estimate the Lipschitz constants $L_{11}$, $L_{12}$ and $L_{22}$ iteratively, however we do not need to estimate the strongly concavity modulus  $\mu$. At each iteration of the proposed algorithm, we aim to find a tuple $(l^{k,i}_{11},l^{k,i}_{12},l^{k,i}_{22})$ by backtracking such that 
	conditions \eqref{C1}, \eqref{C2} and the following condition \eqref{NC3} are satisfied:
	\begin{align}
		{C}_5^{k,i} =& (l_{22}^{k,i}+c_{k,i}) \langle\nabla_y f_{k,i}(x_{k,i}, y_{k,i}) - \nabla_y f_{k,i}(x_{k,i}, y_{k}),y_{k,i}-y_k\rangle \nonumber \\ &+\|\nabla_y f_{k,i}(x_{k,i}, y_{k,i}) - \nabla_y f_{k,i}(x_{k,i}, y_k)\|^2 \leq 0 \label{NC3}\tag{C5}
	\end{align}
	The backtracking strategy \eqref{NC3} is a key inequality that we will use in the following two lemmas to establish some important recursions of the {PF-AGP-NC} algorithm in the nonconvex-concave setting. This is also one of the main differences between proofs in the nonconvex-strongly concave setting and proofs in the nonconvex-concave setting.
	
	\begin{lemma}\label{lm33}
		Suppose that Assumptions \ref{as} and \ref{a2} hold. Let $\{(x_k, y_k)\}$ be a sequence generated by Algorithm \ref{al_nc_c}. If $\forall k$, $\beta_k > l^k_{11}$ and \ {$\gamma_{k} \geq {{l}_{22}^{k} + c_{k}}
			$}, then
		\begin{align}
			& f({x_{k+1}},y_{k+1}) - f({x_k},y_k) \nonumber\\ 
			\leq & - \left(   \frac{{\beta}_k}{2} - \frac{\left({l_{12}^{k}}\right)^2}{2\gamma_{k-1}} \right) \|x_{k+1} - x_k\|^2 +{\gamma_{k-1}} \|y_{k+1}- y_{k}\|^2 + \frac{\gamma_{k-1}}{2} \|y_k - y_{k-1}\|^2\nonumber\\
			&\quad + \frac{c_{k-1}}{2} \left( \|y_{k+1}\|^2 - \|y_k\|^2 \right)+ \frac{c_k}{2} \|y_{k+1}-y_k\|^2.\label{kk}
		\end{align}
	\end{lemma}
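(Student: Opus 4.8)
The plan is to control the change produced by the $x$-update and the change produced by the $y$-update separately, and then add them. For the $x$-update I would just apply Lemma~\ref{lem}; together with the hypothesis $\beta_k>l_{11}^k$, which gives $\beta_k-\tfrac{l_{11}^k}{2}\ge\tfrac{\beta_k}{2}$, this yields $f(x_{k+1},y_k)-f(x_k,y_k)\le-\tfrac{\beta_k}{2}\|x_{k+1}-x_k\|^2$, already accounting for half of the $\|x_{k+1}-x_k\|^2$ coefficient in \eqref{kk} (the other half, $\tfrac{(l_{12}^k)^2}{2\gamma_{k-1}}$, will come in with the opposite sign from the $y$-update estimate below).

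For the $y$-update I would begin with concavity of $f(x_{k+1},\cdot)$, which gives $f(x_{k+1},y_{k+1})-f(x_{k+1},y_k)\le\langle\nabla_yf(x_{k+1},y_k),y_{k+1}-y_k\rangle$, and then pass to the regularized gradient using $\nabla_yf(x_{k+1},y_k)=\nabla_yf_{k-1}(x_{k+1},y_k)+c_{k-1}y_k$ together with $2\langle y_k,y_{k+1}-y_k\rangle=\|y_{k+1}\|^2-\|y_k\|^2-\|y_{k+1}-y_k\|^2$; this is precisely what generates the $\tfrac{c_{k-1}}{2}(\|y_{k+1}\|^2-\|y_k\|^2)$ term in \eqref{kk} (plus a surplus $-\tfrac{c_{k-1}}{2}\|y_{k+1}-y_k\|^2$). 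I would then use the optimality condition for $y_k$ coming from the update \eqref{y_update2}, which in terms of $f_{k-1}$ reads $\langle\nabla_yf_{k-1}(x_k,y_{k-1})-\gamma_{k-1}(y_k-y_{k-1}),y_{k+1}-y_k\rangle\le0$, to get $\langle\nabla_yf_{k-1}(x_{k+1},y_k),y_{k+1}-y_k\rangle\le\langle\nabla_yf_{k-1}(x_{k+1},y_k)-\nabla_yf_{k-1}(x_k,y_{k-1}),y_{k+1}-y_k\rangle+\gamma_{k-1}\langle y_k-y_{k-1},y_{k+1}-y_k\rangle$.

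Next I would estimate the cross term by splitting it through $(x_k,y_k)$: $\nabla_yf_{k-1}(x_{k+1},y_k)-\nabla_yf_{k-1}(x_k,y_{k-1})=[\nabla_yf(x_{k+1},y_k)-\nabla_yf(x_k,y_k)]+[\nabla_yf_{k-1}(x_k,y_k)-\nabla_yf_{k-1}(x_k,y_{k-1})]$ (the regularizer cancels in the first bracket). The first bracket is controlled by the accepted condition \eqref{C2} at iteration $k$, that is $\|\nabla_yf(x_{k+1},y_k)-\nabla_yf(x_k,y_k)\|\le l_{12}^k\|x_{k+1}-x_k\|$, followed by Cauchy--Schwarz and Young with weight $\gamma_{k-1}$, producing $\tfrac{(l_{12}^k)^2}{2\gamma_{k-1}}\|x_{k+1}-x_k\|^2+\tfrac{\gamma_{k-1}}{2}\|y_{k+1}-y_k\|^2$. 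For the second bracket I would set $v_{k+1}=(y_{k+1}-y_k)-(y_k-y_{k-1})$ (as in the proof of Lemma~\ref{31}) and write $y_{k+1}-y_k=(y_k-y_{k-1})+v_{k+1}$: the $(y_k-y_{k-1})$ part is bounded by the accepted condition \eqref{NC3} at iteration $k-1$, namely $(l_{22}^{k-1}+c_{k-1})\langle\nabla_yf_{k-1}(x_k,y_k)-\nabla_yf_{k-1}(x_k,y_{k-1}),y_k-y_{k-1}\rangle+\|\nabla_yf_{k-1}(x_k,y_k)-\nabla_yf_{k-1}(x_k,y_{k-1})\|^2\le0$, and the $v_{k+1}$ part by Young with weight $l_{22}^{k-1}+c_{k-1}$; the two $\|\nabla_yf_{k-1}(x_k,y_k)-\nabla_yf_{k-1}(x_k,y_{k-1})\|^2$ contributions add to a nonpositive quantity and are discarded, leaving only $\tfrac{l_{22}^{k-1}+c_{k-1}}{2}\|v_{k+1}\|^2\le\tfrac{\gamma_{k-1}}{2}\|v_{k+1}\|^2$ by the hypothesis $\gamma_{k-1}\ge l_{22}^{k-1}+c_{k-1}$. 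Finally, multiplying the identity \eqref{th} by $\gamma_{k-1}$ gives $\gamma_{k-1}\langle y_k-y_{k-1},y_{k+1}-y_k\rangle=\tfrac{\gamma_{k-1}}{2}\|y_k-y_{k-1}\|^2+\tfrac{\gamma_{k-1}}{2}\|y_{k+1}-y_k\|^2-\tfrac{\gamma_{k-1}}{2}\|v_{k+1}\|^2$, which cancels the residual $\|v_{k+1}\|^2$ and supplies the $\tfrac{\gamma_{k-1}}{2}\|y_k-y_{k-1}\|^2$ term together with a further $\tfrac{\gamma_{k-1}}{2}\|y_{k+1}-y_k\|^2$.

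Adding the $x$-update bound to the three $y$-update pieces gives $f(x_{k+1},y_{k+1})-f(x_k,y_k)$ bounded by the right-hand side of \eqref{kk} except with $\gamma_{k-1}-\tfrac{c_{k-1}}{2}$ in place of $\gamma_{k-1}+\tfrac{c_k}{2}$ as the coefficient of $\|y_{k+1}-y_k\|^2$; since $c_{k-1},c_k\ge0$ this is only a stronger statement, so \eqref{kk} follows at once. I expect the main obstacle to be the bookkeeping of which backtracking inequality is used at which iteration and for which function — in particular, that \eqref{NC3} is applied at iteration $k-1$ with the regularized function $f_{k-1}$ and the iterate pair $(y_k,y_{k-1})$, and that the accepted $l_{22}^{k-1},c_{k-1}$ are exactly the constants in that inequality because acceptance forces $\operatorname{sgn}(C_5)=-1$, which leaves $l_{22}$ unchanged — together with carrying the regularization consistently through $\nabla_yf=\nabla_yf_{k-1}+c_{k-1}\,\mathrm{id}$ and matching the two Young weights so that the $\|x_{k+1}-x_k\|^2$ coefficient and the $\|v_{k+1}\|^2$ cancellation come out precisely. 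Everything else is routine Cauchy--Schwarz/Young manipulation paralleling the proof of Lemma~\ref{31}.
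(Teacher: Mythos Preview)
Your proposal is correct and follows essentially the same route as the paper's proof: the same optimality condition \eqref{334}, the same three-way split through $(x_k,y_k)$ with the abbreviation $v_{k+1}$, the same use of \eqref{C2} and \eqref{NC3}, and the same cancellation of $\|v_{k+1}\|^2$ via the polarization identity, followed by adding the $x$-bound from Lemma~\ref{lem}. The only cosmetic differences are that you start from concavity of $f$ and insert the regularizer afterwards (the paper starts from concavity of $f_k$ and removes it at the end), and that you apply Young with weight $l_{22}^{k-1}+c_{k-1}$ before bounding by $\gamma_{k-1}$ (the paper uses $\gamma_{k-1}$ directly); both choices lead to the same estimate, and as you note your final coefficient $\gamma_{k-1}-\tfrac{c_{k-1}}{2}$ on $\|y_{k+1}-y_k\|^2$ is in fact slightly sharper than the $\gamma_{k-1}+\tfrac{c_k}{2}$ stated in \eqref{kk}.
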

	\begin{proof}
		The optimality condition for $y_k$ in \eqref{y_update2} implies that
		\begin{equation}
			\langle \nabla_y f_{k-1} (x_k, y_k) - \gamma_{k-1} (y_k - y_{k-1}), y_k - y_{k+1} \rangle \leq 0.\label{334}
		\end{equation}
		%
		The concavity of $f_k(x_{k+1}, y)$ with respect to $y$ together with (\ref{334}) then imply that
		\begin{align}
			& f_k (x_{k+1}, y_{k+1}) - f_k (x_{k+1}, y_{k}) \nonumber\\
			\leq & \langle \nabla_y f_k (x_{k+1}, y_{k})- \nabla_y f_{k-1} (x_{k}, y_{k}), y_{k+1}-y_k \rangle\nonumber\\
			& + \langle \nabla_y f_{k-1} (x_{k}, y_{k-1}), y_{k+1} - y_k \rangle\nonumber\\
			&= \langle \nabla_y f_k (x_{k+1}, y_{k}) - \nabla_y f_{k-1} (x_{k}, y_{k}), y_{k+1} - y_k \rangle\nonumber\\
			& + \langle \nabla_y f_{k-1} (x_{k}, y_{k}) - \nabla_y f_{k-1} (x_{k}, y_{k-1}), v_{k+1} \rangle\nonumber\\
			& + \langle \nabla_y f_{k-1} (x_{k}, y_{k}) - \nabla_y f_{k-1} (x_{k}, y_{k-1}), y_{k} - y_{k-1} \rangle  \nonumber\\
			& + \gamma_{k-1} \langle y_k - y_{k-1}, y_{k+1} - y_k \rangle, \label{335}
		\end{align}
		where $ v_{k+1} = y_{k+1} - y_k - (y_k - y_{k-1}) $. We now provide upper bounds for the inner product terms on the right hand side of \eqref{335}. Firstly, by the definition of $ f_k(x_{k+1}, y_k) $ and $ f_{k-1}(x_k, y_k) $, the backtracking strategy \eqref{C2}  and the Cauchy-Schwarz inequality, we have
		\begin{align}
			&\langle \nabla_y f_k (x_{k+1}, y_k) - \nabla_y f_{k-1} (x_k, y_k), y_{k+1} - y_k \rangle\nonumber\\ 
			&= \langle \nabla_y f(x_{k+1}, y_k) - \nabla_y f (x_k, y_k), y_{k+1} - y_k \rangle  - (c_k - c_{k-1})\langle y_k, y_{k+1} - y_k \rangle \nonumber\\
			&\leq \frac{(l_{12}^k)^2}{2\gamma_{k-1}} \|x_{k+1} - x_k\|^2 + \frac{\gamma_{k-1}}{2} \|y_{k+1} - y_k\|^2 - \frac{c_k - c_{k-1}}{2} (\|y_{k+1}\|^2 - \|y_k\|^2)\nonumber\\
			&\quad + \frac{c_k - c_{k-1}}{2} \|y_{k+1}-y_k\|^2\nonumber\\
			&\leq \frac{(l_{12}^k)^2}{2\gamma_{k-1}} \|x_{k+1} - x_k\|^2 + \frac{\gamma_{k-1}}{2} \|y_{k+1} - y_k\|^2 - \frac{c_k - c_{k-1}}{2} (\|y_{k+1}\|^2 - \|y_k\|^2) \nonumber\\
			&\quad + \frac{c_k}{2} \|y_{k+1}-y_k\|^2. \label{336}
		\end{align}
		Secondly, by the Cauchy-Schwarz inequality, we get
		\begin{align}
			&\langle \nabla_y f_{k-1} (x_k, y_k) - \nabla_y f_{k-1} (x_k, y_{k-1}), v_{k+1} \rangle\nonumber \\ 
			\leq& \frac{1}{2 \gamma_{k-1}} \| \nabla_y f_{k-1} (x_k, y_k) - \nabla_y f_{k-1} (x_k, y_{k-1}) \|^2 + \frac{\gamma_{k-1}}{2} \| v_{k+1} \|^2.
			\label{337}
		\end{align}
		Thirdly, by \eqref{NC3} , we have that
		\begin{align}
			&\langle \nabla_y f_{k-1}(x_k, y_k) - \nabla_y f_{k-1}(x_k, y_{k-1}), y_k - y_{k-1} \rangle\nonumber\\
			\leq & -\frac{1}{{l}_{22}^{k-1} + c_{k-1}} \| \nabla_y f_{k-1}(x_k, y_k) - \nabla_y f_{k-1}(x_k, y_{k-1}) \|^2
			.
			\label{338}
		\end{align}
		We also have the following equation that
		\begin{align}
			&\gamma_{k-1}\langle y_{k} - y_{k-1}, y_{k+1} - y_k \rangle \nonumber\\
			=& \frac{\gamma_{k-1}}{2} \| y_{k+1} - y_k \|^2 + \frac{\gamma_{k-1}}{2} \| y_k - y_{k-1} \|^2 - \frac{\gamma_{k-1}}{2} \| v_{k+1} \|^2. \label{339}
		\end{align}
		By $\gamma_{k} \geq {{l}_{22}^{k} + c_{k}}$, plugging \eqref{336}-\eqref{339} into (\ref{335}), 
		and using the definition of $f_k(x_{k+1}, y_{k+1})$ and $f_k(x_{k+1}, y_k)$,   we obtain
		\begin{align}
			&f(x_{k+1}, y_{k+1}) - f(x_{k+1}, y_k) \nonumber\\
			\leq& \frac{(l_{12}^{k})^2}{2 \gamma_{k-1}} \| x_{k+1} - x_k \|^2 + \gamma_{k-1} \| y_{k+1} - y_k \|^2 + \frac{\gamma_{k-1}}{2}  \| y_k - y_{k-1} \|^2\nonumber\\
			&\quad + \frac{c_{k-1}}{2} \left( \|y_{k+1}\|^2 - \|y_k\|^2 \right)+ \frac{c_k}{2} \|y_{k+1}-y_k\|^2  \label{340}.
		\end{align}
		By  $  \beta_k > l^k_{11}$, we have
		\begin{equation}
			f(x_{k+1}, y_{k}) - f(x_k, y_k) \leq -\frac{  \beta_k }{2}\| x_{k+1} - x_k \|^2 \label{341}.
		\end{equation}
		By adding (\ref{340}) and (\ref{341}), we complete the proof.
	\end{proof}
	\begin{lemma}\label{lm34}
		Suppose that Assumptions \ref{as} and \ref{a2} hold. Let $\{(x_k, y_k)\}$ be a sequence generated by Algorithm \ref{al_nc_c}. Denote
		\begin{align}
			S_{k+1} &= \frac{8\gamma_k^2}{c_{k}} \| y_{k+1} - y_k \|^2 + {8}{\gamma_k} \left( \frac{c_{k+1}}{c_{k}}-1 \right) \| y_{k+1} \|^2,\\
			F_{k+1} &= {f(x_{k+1}, y_{k+1})} + S_{k+1} - \frac{15\gamma_k}{2} \|y_{k+1} - y_k\|^2 - \frac{c_k}{2} \|y_{k+1}\|^2.
		\end{align}
		If
		\begin{equation}
			\beta_k > l^k_{11}, \quad \frac{\gamma_{k+1}}{c_{k+1}} - \frac{\gamma_k}{c_k} \leq \frac{1}{5},\quad {c_{k+1}} - {c_k} \leq {\gamma_{k+1}-\gamma_k}, \quad \gamma_k \geq {{l}^k_{22} + c_k} ,\label{342}
		\end{equation}
		then $\forall k \geq 2$,
		\begin{align}
			F_{k+1} - F_k =& - \frac{19\gamma_k}{10}\|y_{k+1} - y_k\|^2 -\left(\frac{  \beta_k}{2} - \frac{(l^k_{12})^2}{2\gamma_{k-1}}-\frac{16(l^k_{12})^2\gamma_{k-1}}{c_{k-1}^2}\right)\|x_{k+1} - x_k\|^2 \nonumber\\
			& +8\left( {\gamma_{k}} \frac{c_{k+1}}{c_k} - {\gamma_{k-1}} \frac{c_k}{c_{k-1}} \right) \|y_{k+1}\|^2+\frac{c_{k-1}-c_k}{2}\|y_{k+1}\|^2.
		\end{align}
	\end{lemma}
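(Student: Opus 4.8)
The plan is to follow the proof of Lemma \ref{32}, with the strong-concavity modulus $\mu$ replaced by the regularization parameter $c_k$ and the pair \eqref{C3}--\eqref{C4} replaced by the single backtracking inequality \eqref{NC3}. Concretely: (i) derive from the $y$-optimality conditions and the backtracking inequalities a ``key inequality'' controlling $\tfrac{\gamma_k}{2}\|y_{k+1}-y_k\|^2-\tfrac{\gamma_{k-1}}{2}\|y_k-y_{k-1}\|^2$; (ii) multiply it by the constant multiple of $\gamma_{k-1}/c_{k-1}$ built into the definition of $S$ and re-arrange into a recursion for $S_{k+1}-S_k$; (iii) add this to the one-step descent estimate \eqref{kk} of Lemma \ref{lm33} and simplify with the stepsize conditions \eqref{342}. (As in Lemmas \ref{32} and \ref{lm33}, the displayed relation is to be read as an inequality ``$\le$'', the underlying estimates being one-sided.)

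For step (i) I would record the optimality condition of the regularized projection \eqref{y_update2} at iterations $k-1$ and $k$: writing $\nabla_y f_j(x,y)=\nabla_y f(x,y)-c_j y$, these are $\langle\nabla_y f_{k-1}(x_k,y_{k-1})-\gamma_{k-1}(y_k-y_{k-1}),\,y_{k+1}-y_k\rangle\le 0$ and $\langle\nabla_y f_k(x_{k+1},y_k)-\gamma_k(y_{k+1}-y_k),\,y_k-y_{k+1}\rangle\le 0$. Adding them --- exactly as in the passage from \eqref{36}--\eqref{37} to \eqref{Three} in the proof of Lemma \ref{32}, now with the regularized gradients --- and using $v_{k+1}=(y_{k+1}-y_k)-(y_k-y_{k-1})$ together with the polarization identity for $\langle y_k-y_{k-1},\,y_{k+1}-y_k\rangle$ produces on the left $(\gamma_k-\tfrac{\gamma_{k-1}}{2})\|y_{k+1}-y_k\|^2+\tfrac{\gamma_{k-1}}{2}\|v_{k+1}\|^2-\tfrac{\gamma_{k-1}}{2}\|y_k-y_{k-1}\|^2$ and on the right the same three inner products treated in \eqref{335}. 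These I would bound just as inside the proof of Lemma \ref{lm33}: the $x$-difference term by \eqref{C2} and Cauchy--Schwarz (which also produces the regularization correction $-\tfrac{c_k-c_{k-1}}{2}(\|y_{k+1}\|^2-\|y_k\|^2)+\tfrac{c_k}{2}\|y_{k+1}-y_k\|^2$ from $\nabla_y f_k(x_{k+1},y_k)-\nabla_y f_{k-1}(x_k,y_k)=\nabla_y f(x_{k+1},y_k)-\nabla_y f(x_k,y_k)-(c_k-c_{k-1})y_k$), the $v_{k+1}$-term by Cauchy--Schwarz, and the $(y_k-y_{k-1})$-term by the new inequality \eqref{NC3}, which cancels the $\|\nabla_y f_{k-1}(x_k,y_k)-\nabla_y f_{k-1}(x_k,y_{k-1})\|^2$ contributions once $\gamma_{k-1}\ge l_{22}^{k-1}+c_{k-1}$ is used. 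The outcome is the NC analogue of \eqref{ori}.

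For steps (ii)--(iii) I would multiply the key inequality by the multiple of $\gamma_{k-1}/c_{k-1}$ matching $S$ and invoke Assumption \ref{a2} together with \eqref{342} --- the bound on the increment of $\gamma_k/c_k$ to pass from $\gamma_k\gamma_{k-1}/c_{k-1}$ to $\gamma_k^2/c_k$, and $c_{k+1}-c_k\le\gamma_{k+1}-\gamma_k$ to align the $\|y_{k+1}\|^2$-part of $S$ --- obtaining $S_{k+1}-S_k\le \tfrac{16(l_{12}^k)^2\gamma_{k-1}}{c_{k-1}^2}\|x_{k+1}-x_k\|^2+(\mathrm{coeff})\|y_{k+1}-y_k\|^2-(\mathrm{coeff})\|y_k-y_{k-1}\|^2+8\big(\gamma_k\tfrac{c_{k+1}}{c_k}-\gamma_{k-1}\tfrac{c_k}{c_{k-1}}\big)\|y_{k+1}\|^2+\tfrac{c_{k-1}-c_k}{2}\|y_{k+1}\|^2$, the leftover $\|y_k\|^2$ and $\|v_{k+1}\|^2$ pieces being non-positive. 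Adding this to \eqref{kk}, moving the $-\tfrac{15\gamma_k}{2}\|y_{k+1}-y_k\|^2$ and $-\tfrac{c_k}{2}\|y_{k+1}\|^2$ terms of $F$, and using $\beta_k>l_{11}^k$ and $\gamma_k\ge l_{22}^k+c_k$ collapses the $\|x_{k+1}-x_k\|^2$ coefficient to $-\big(\tfrac{\beta_k}{2}-\tfrac{(l_{12}^k)^2}{2\gamma_{k-1}}-\tfrac{16(l_{12}^k)^2\gamma_{k-1}}{c_{k-1}^2}\big)$, leaves precisely the two stated $\|y_{k+1}\|^2$ terms, and drives the $\|y_{k+1}-y_k\|^2$ coefficient down to $-\tfrac{19\gamma_k}{10}$ --- the calibration underlying the algorithmic choices $\gamma_{k,i}=20\,l_{22}^{k,i}$, $c_{k,i}=19\,l_{22}^{k,i}/k^{1/4}$, which keep $\gamma_{k-1}$, $c_k$ and the $S$-recursion remainders all comparable to $\gamma_k/10$ while $\gamma_k\ge l_{22}^k+c_k$ still holds. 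I expect the main obstacle to be exactly this coefficient bookkeeping around the moving regularization center: one must carry the $(c_k-c_{k-1})\|y\|^2$ corrections and the $c_{k+1}/c_k$ ratios hidden in $S$ through the multiplication by $\gamma_{k-1}/c_{k-1}$ and verify, despite $\{c_k\}$ not being monotone while $\{\gamma_k\}$ and $\{\gamma_k/c_k\}$ are, that after telescoping only the two displayed $\|y_{k+1}\|^2$ terms survive and every other $\|y_k\|^2$, $\|y_{k+1}\|^2$ and $\|v_{k+1}\|^2$ remainder is non-positive.
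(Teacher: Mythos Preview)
Your proposal is correct and follows essentially the same route as the paper: derive the ``key inequality'' from the two $y$-optimality conditions and the splitting via $v_{k+1}$, bound the three inner products using \eqref{C2}, Cauchy--Schwarz (with a free weight, set to $c_k/2$), and \eqref{NC3}, then multiply by $16\gamma_{k-1}/c_{k-1}$ and use \eqref{342} to pass to the $S$-recursion before adding \eqref{kk} from Lemma~\ref{lm33}. The paper carries out exactly these steps, including the same use of $\gamma_{k+1}/c_{k+1}-\gamma_k/c_k\le 1/5$ to convert $\gamma_k\gamma_{k-1}/c_{k-1}$ into $\gamma_k^2/c_k$, so your plan matches the paper's argument.
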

	\begin{proof}
		The optimality condition for $y_k$ in \eqref{y_update2} implies that
		\begin{equation}
			\langle \nabla_y f_k(x_{k+1}, y_k)-\gamma_k(y_{k+1}-y_k), y_{k} - y_{k+1} \rangle \leq 0\label{333},
		\end{equation}
		By (\ref{334}) and (\ref{333}), we have  
		\begin{align}
			&\langle \nabla_y f_{k-1}(x_{k}, y_{k-1}) - \nabla_y f_{k}(x_{k+1}, y_{k}) +\gamma_k( y_{k+1} - y_{k})  - \gamma_{k-1} (y_{k} - y_{k-1}), y_{k+1} - y_{k} \rangle \nonumber\\ 
			&\leq 0,\nonumber
		\end{align}
		and
		\begin{align}
			&	\gamma_{k} \| y_{k+1} - y_{k} \|^2 - \gamma_{k-1} \langle y_{k+1} - y_{k}, y_{k} - y_{k-1} \rangle\nonumber\\
			\leq& \langle \nabla_y f_{k}(x_{k+1}, y_{k}) - \nabla_y f_{k-1}(x_{k}, y_{k-1}), y_{k+1} - y_{k} \rangle,\label{344}
		\end{align}
		which can be rewritten as
		\begin{align}
			&\left(\gamma_k - \frac{\gamma_{k-1}}{2}\right) \|y_{k+1} - y_k\|^2 + \frac{\gamma_{k-1}}{2} \|v_{k+1}\|^2 - \frac{\gamma_{k-1}}{2} \|y_k - y_{k-1}\|^2\nonumber\\
			\leq & \langle \nabla_y f_k(x_{k+1}, y_k) - \nabla_y f_{k-1}(x_k, y_{k-1}), y_{k+1} - y_k \rangle\nonumber\\
			=& \langle \nabla_y f_k(x_{k+1}, y_k) - \nabla_y f_{k-1}(x_k, y_k), y_{k+1} - y_k \rangle\nonumber\\
			& + \langle \nabla_y f_{k-1}(x_k, y_k) - \nabla_y f_{k-1}(x_k, y_{k-1}), v_{k+1}  \rangle\nonumber\\
			&+ \langle \nabla_y f_k(x_{k-1}, y_{k-1}) - \nabla_y f_{k-1}(x_k, y_{k-1}), y_{k} - y_{k-1} \rangle.
		\end{align}
		Using an argument similar to the proof of (\ref{336})-(\ref{339}), by \eqref{NC3}, $f_k(x,y)$ is concave with respect to $y$ and the Cauchy-Schwarz inequality, we conclude from the above inequality that
		\begin{align}
			&\left(\gamma_k - \frac{\gamma_{k-1}}{2}\right) \|y_{k+1} - y_k\|^2 + \frac{\gamma_{k-1}}{2} \|v_{k+1}\|^2 - \frac{\gamma_{k-1}}{2} \|y_k - y_{k-1}\|^2\nonumber\\
			\leq & \; \frac{(l_{12}^k)^2}{2a_k}  \|x_{k+1} - x_k\|^2 + \frac{a_k}{2} \|y_{k+1} - y_k\|^2 - \frac{c_k - c_{k-1}}{2} (\|y_{k+1}\|^2 - \|y_k\|^2)\nonumber\\
			& + \frac{c_k - c_{k-1}}{2} \|y_{k+1} - y_k\|^2 + \frac{1}{2\gamma_{k-1}} \|\nabla_y f_{k-1}(x_k, y_k) - \nabla_y f_{k-1}(x_k, y_{k-1})\|^2  \nonumber\\ 
			& +\frac{\gamma_{k-1}}{2}\|v_{k+1}\|- \frac{1}{{l}_{22}^{k-1} + 2c_{k-1}} \|\nabla_y f_{k}(x_k, y_k) - \nabla_y f_{k}(x_k, y_{k-1})\|^2  \nonumber\\ 
			&  - \frac{c_{k-1} }{2} \|y_k - y_{k-1}\|^2,\label{345}
		\end{align}
		for any $a_k > 0$. 
		Combining $\gamma_k \geq {{l}^k_{22} + c_k}$ and rearranging the terms in (\ref{345}), we obtain
		\begin{align}
			&\frac{\gamma_k}{2} \|y_{k+1} - y_k\|^2 + \frac{c_k - c_{k-1}}{2} \|y_{k+1}\|^2 \nonumber\\
			&\quad\leq \frac{\gamma_{k-1}}{2} \|y_k - y_{k-1}\|^2 + \left( \frac{a_k}{2} + \frac{\gamma_{k-1}}{2} - \frac{\gamma_k}{2}+\frac{c_k}{2}-\frac{c_{k-1}}{2} \right) \|y_{k+1} - y_k\|^2  + \frac{c_k - c_{k-1}}{2} \|y_{k}\|^2 \nonumber\\
			&\quad\quad+ \frac{ (l^k_{12})^2 }{2a_k} \|x_{k+1} - x_{k}\|^2 - \frac{c_{k-1}}{2} \|y_k - y_{k-1}\|^2 \nonumber\\
			&\quad\leq \frac{\gamma_{k-1}}{2} \|y_k - y_{k-1}\|^2 +  \frac{a_k}{2}\|y_{k+1} - y_k\|^2  + \frac{c_k - c_{k-1}}{2} \|y_{k}\|^2 + \frac{ (l^k_{12})^2 }{2a_k} \|x_{k+1} - x_{k}\|^2 				
			\\&\quad\quad- \frac{c_{k-1}}{2} \|y_k - y_{k-1}\|^2.
		\end{align}
		By multiplying $\frac{16\gamma_{k-1}}{c_{k-1}}$ on both sides of the above inequality, setting $a_k = \frac{c_k}{2}$, and using the definition of $S_{k+1}$, (\ref{342}) and $\{\gamma_k\}$ is nodecreasing, we then obtain
		\begin{align}
			S_{k+1} - S_k &\leq 8\left( {\gamma_{k}} \frac{c_{k+1}}{c_k} - {\gamma_{k-1}} \frac{c_k}{c_{k-1}} \right) \|y_{k+1}\|^2 + \frac{16 (l^k_{12})^2 \gamma_{k-1}}{c_{k-1}^2} \|x_{k+1} - x_k\|^2 \nonumber\\
			&\quad+ \frac{28\gamma_{k}}{5} \|y_{k+1} - y_k\|^2 - {8}{\gamma_{k-1}} \|y_{k} - y_{k-1}\|^2.
			\label{347}
		\end{align}
		%
		Combining  (\ref{kk}) and (\ref{347}), and using the definition of $F_{k+1}$,  we complete the proof.
	\end{proof}
	
	We are now ready to establish the iteration complexity for the {PF-AGP-NC} algorithm to achieve an $\varepsilon$-stationary point under the general nonconvex–concave setting.
	\begin{theorem}\label{th2}
		Suppose that Assumptions \ref{as} and \ref{a2} hold. Let $\{(x_k, y_k)\}$ be a sequence generated by Algorithm \ref{al_nc_c}. If $  \beta_k = \frac{l_{12}^k}{20l^{k-1}_{22}} + \frac{2(l_{12}^k)^2 \sqrt{k}}{l_{22}^{k-1}} 
		$, $\gamma_{k} =20l^k_{22}$ ,  $c_k = \frac{19l^k_{22}}{k^{1/4}}$,  then for any given $\varepsilon > 0$, $k \geq 2$, we have
		$$
		T(\varepsilon) \leq   \max \left(  \left( \frac{ \hat{\alpha} d_2d_3}{ \varepsilon^2}+1\right)^2,  \frac{76^4L_{22}^{4}\hat{\mathcal{D}}_y^4}{\varepsilon^{4}} \right),
		$$
		where  $d_2= F_2 - \underline{F} +339L_{22}  \hat{\mathcal{D}}_y^2$, $d_3 = \max\{\bar{d}_1,\max\limits_{{k=2,\cdots,T(\varepsilon)}} \{\frac{19^2\cdot20^2 l_{22}^kl_{22}^{k-1}}{16(l_{12}^k)^2\}}\}\} $, $\hat{\mathcal{D}}_y = \max\{\|y\| \, |  y \in Y\}$ with $\underline{F} = {\underline{f}} - 639L_{22}\hat{\mathcal{D}}_y^2$, $\underline{f} = \min_{(x,y) \in X \times Y} f(x, y)$,  $ \hat{\alpha} =  \max\limits_{{k=2,\cdots,T(\varepsilon)}}\{\frac{2^8\cdot5^2(l^k_{12})^2}{19^2\cdot20l_{22}^{k-1}}\}$ and
		$\bar{d}_1 = \max\limits_{{k=2,\cdots,T(\varepsilon)}} \left\{ 4 + \frac{19^4}{2^{15} \cdot 5^4} + \frac{19^4 \cdot 20^2(l_{22}^k)^2}{2^{15} \cdot 5^4 \cdot (l_{12}^k)^2} \right\}$.
	\end{theorem}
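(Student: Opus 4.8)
The plan is to follow the blueprint of the proof of Theorem~\ref{th1}, with the vanishing regularization parameter $c_k$ playing the role that the strong concavity modulus $\mu$ played there; the new feature, $c_k\to0$, is exactly what makes the final bound a maximum of two $\varepsilon^{-4}$--type quantities. First I would check that the prescribed step sizes $\beta_k=\frac{l_{12}^k}{20l_{22}^{k-1}}+\frac{2(l_{12}^k)^2\sqrt k}{l_{22}^{k-1}}$, $\gamma_k=20l_{22}^k$, $c_k=\frac{19l_{22}^k}{k^{1/4}}$ verify, for every $k\ge2$, the hypotheses of Lemma~\ref{lm33} and the four conditions in~\eqref{342}: $\gamma_k\ge l_{22}^k+c_k$ follows from $k^{1/4}\ge1$; $\frac{\gamma_{k+1}}{c_{k+1}}-\frac{\gamma_k}{c_k}=\frac{20}{19}\bigl((k+1)^{1/4}-k^{1/4}\bigr)\le\frac15$; and $c_{k+1}-c_k\le\gamma_{k+1}-\gamma_k$ together with $\beta_k>l_{11}^k$ follow because $\{l_{22}^k\}$ is non-decreasing and bounded by $2L_{22}$ while the $\sqrt k$--term in $\beta_k$ grows. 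Once these hold, Lemma~\ref{lm34} then yields, after rearrangement, for $k\ge2$,
\begin{align*}
& \tfrac{19\gamma_k}{10}\|y_{k+1}-y_k\|^2+\Bigl(\tfrac{\beta_k}{2}-\tfrac{(l_{12}^k)^2}{2\gamma_{k-1}}-\tfrac{16(l_{12}^k)^2\gamma_{k-1}}{c_{k-1}^2}\Bigr)\|x_{k+1}-x_k\|^2\\
\le{}& F_k-F_{k+1}+8\Bigl(\gamma_k\tfrac{c_{k+1}}{c_k}-\gamma_{k-1}\tfrac{c_k}{c_{k-1}}\Bigr)\|y_{k+1}\|^2+\tfrac{c_{k-1}-c_k}{2}\|y_{k+1}\|^2 ,
\end{align*}
and plugging in the step sizes one checks that the coefficient of $\|x_{k+1}-x_k\|^2$ is at least $c\,\frac{(l_{12}^k)^2}{l_{22}^{k-1}}\sqrt k$ for an absolute constant $c>0$, since $\tfrac{16(l_{12}^k)^2\gamma_{k-1}}{c_{k-1}^2}=\tfrac{320(l_{12}^k)^2}{361\,l_{22}^{k-1}}\sqrt{k-1}$ is dominated by the $\sqrt k$--term of $\tfrac{\beta_k}{2}$.

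Next, exactly as in~\eqref{323}--\eqref{325}, I would bound $\|(\nabla G_k)_x\|=\beta_k\|x_{k+1}-x_k\|$, and, because the $y$--update of Algorithm~\ref{al_nc_c} uses $\nabla_yf-c_ky$, Assumption~\ref{as} and the triangle inequality give
\[
\|(\nabla G_k)_y\|\le L_{12}\|x_{k+1}-x_k\|+\gamma_k\|y_{k+1}-y_k\|+c_k\|y_k\|\le L_{12}\|x_{k+1}-x_k\|+\gamma_k\|y_{k+1}-y_k\|+c_k\hat{\mathcal{D}}_y .
\]
Since $c_k\le\frac{38L_{22}}{k^{1/4}}$, once $k\ge K_0:=\bigl(\tfrac{76L_{22}\hat{\mathcal{D}}_y}{\varepsilon}\bigr)^4$ we have $c_k\hat{\mathcal{D}}_y\le\varepsilon/2$, hence $\|\nabla G_k\|^2\le(\beta_k^2+4L_{12}^2)\|x_{k+1}-x_k\|^2+4\gamma_k^2\|y_{k+1}-y_k\|^2+\varepsilon^2/2$. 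If $T(\varepsilon)\le K_0$ the claimed bound holds through its second entry, so assume $T(\varepsilon)>K_0$.

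Put $d^{(k)}=\min\bigl\{\tfrac{19}{40\gamma_k},\,(\beta_k^2+4L_{12}^2)^{-1}\bigl(\tfrac{\beta_k}{2}-\tfrac{(l_{12}^k)^2}{2\gamma_{k-1}}-\tfrac{16(l_{12}^k)^2\gamma_{k-1}}{c_{k-1}^2}\bigr)\bigr\}$; by the first paragraph $d^{(k)}\ge(\hat\alpha d_3\sqrt k)^{-1}$. Summing the recursion from $k=2$ to $T(\varepsilon)-1$ telescopes the $F$--terms to $F_2-F_{T(\varepsilon)}$. For the two $\|y_{k+1}\|^2$--sums I would use $\|y_{k+1}\|\le\hat{\mathcal{D}}_y$ with the monotonicities: since $\tfrac{k}{k+1}>\tfrac{k-1}{k}$ and $\{l_{22}^k\}$ is non-decreasing, $\gamma_k\tfrac{c_{k+1}}{c_k}-\gamma_{k-1}\tfrac{c_k}{c_{k-1}}\ge0$, so that sum telescopes to at most $8\hat{\mathcal{D}}_y^2\,\gamma_{T(\varepsilon)-1}\tfrac{c_{T(\varepsilon)}}{c_{T(\varepsilon)-1}}\le320L_{22}\hat{\mathcal{D}}_y^2$; and $c_{k-1}-c_k\le19l_{22}^k\bigl((k-1)^{-1/4}-k^{-1/4}\bigr)$ telescopes, giving the other sum at most $19L_{22}\hat{\mathcal{D}}_y^2$. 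Lower-bounding $F_{T(\varepsilon)}$ by $\underline{F}$ uses $f\ge\underline{f}$, the inequality $\tfrac{8\gamma_T^2}{c_T}\ge\tfrac{15\gamma_T}{2}$ (valid since $T^{1/4}\ge1$) to absorb the $-\tfrac{15\gamma_T}{2}\|y_{T+1}-y_T\|^2$ term, and $\|\cdot\|\le\hat{\mathcal{D}}_y$ on the rest. Hence $\sum_{k=2}^{T(\varepsilon)-1}\bigl[\tfrac{19\gamma_k}{10}\|y_{k+1}-y_k\|^2+(\cdots)\|x_{k+1}-x_k\|^2\bigr]\le F_2-\underline{F}+339L_{22}\hat{\mathcal{D}}_y^2=d_2$. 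For $K_0\le k\le T(\varepsilon)-1$ one has $\|\nabla G_k\|>\varepsilon$, so $(\beta_k^2+4L_{12}^2)\|x_{k+1}-x_k\|^2+4\gamma_k^2\|y_{k+1}-y_k\|^2>\varepsilon^2/2$; multiplying by $d^{(k)}$ and discarding the nonnegative $k<K_0$ terms yields $\tfrac{\varepsilon^2}{2}\sum_{k=K_0}^{T(\varepsilon)-1}d^{(k)}\le d_2$. Finally $\sum_{k=K_0}^{T(\varepsilon)-1}k^{-1/2}\ge2\bigl(\sqrt{T(\varepsilon)}-\sqrt{K_0}\bigr)$ gives $\sqrt{T(\varepsilon)}\le\sqrt{K_0}+\tfrac{\hat\alpha d_2d_3}{\varepsilon^2}$, and squaring --- after re-absorbing the absolute constants into the maxima defining $\hat\alpha,d_3$ (the purpose of the explicit ``$4+\cdots$'' in $\bar{d}_1$) --- produces $T(\varepsilon)\le\max\bigl((\tfrac{\hat\alpha d_2d_3}{\varepsilon^2}+1)^2,\,\tfrac{76^4L_{22}^4\hat{\mathcal{D}}_y^4}{\varepsilon^4}\bigr)$.

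The delicate part is the telescoping step together with the attendant constant bookkeeping. The two $\|y_{k+1}\|^2$--sums produced by the adaptive, $k$--dependent weights $\gamma_kc_{k+1}/c_k$ and $(c_{k-1}-c_k)/2$ do not telescope on their own; one must use the precise monotonicity of $\{l_{22}^k\}$ (only ever enlarged, $\le2L_{22}$) and of $k^{-1/4}$ to give the relevant increments a definite sign and collapse them to $O(L_{22}\hat{\mathcal{D}}_y^2)$. Equally essential is checking that the prescribed $\beta_k$ is large enough --- relative both to $l_{11}^k$ and to $\tfrac{16(l_{12}^k)^2\gamma_{k-1}}{c_{k-1}^2}\sim\sqrt{k-1}$ --- so that the coefficient of $\|x_{k+1}-x_k\|^2$ stays positive and of order $\sqrt k$; this is what forces $d^{(k)}\sim k^{-1/2}$ and, via $\sum_{k\le N}k^{-1/2}\sim\sqrt N$, yields the $\varepsilon^{-4}$ rate alongside the regularization-floor term $K_0$.
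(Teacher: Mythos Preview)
Your proposal is essentially correct and follows the same overall strategy as the paper: verify the hypotheses~\eqref{342}, feed them into Lemma~\ref{lm34}, bound the stationarity gap by the step increments, telescope the $F_k$ terms and the $\|y_{k+1}\|^2$ tails, and handle the vanishing regularizer $c_k$ separately to produce the second entry of the $\max$.

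The one genuine organizational difference is how you absorb the extra $c_k\|y_k\|$ term. The paper introduces the \emph{regularized} gap $\nabla\tilde G_k$ (Definition~\ref{d21}), notes $\|\nabla G_k\|-\|\nabla\tilde G_k\|\le c_k\|y_k\|$, defines $\tilde T(\varepsilon)=\min\{k:\|\nabla\tilde G_k\|\le\varepsilon/2\}$, and bounds $\tilde T(\varepsilon)$ by summing from $k=2$ to $\tilde T(\varepsilon)$; then $T(\varepsilon)\le\max\bigl(\tilde T(\varepsilon),K_0\bigr)$ drops out as a clean maximum. You instead keep $\nabla G_k$ throughout, restrict the summation to $k\ge K_0$ so that $c_k\hat{\mathcal D}_y\le\varepsilon/2$, and arrive at $\sqrt{T(\varepsilon)}\le\sqrt{K_0}+\hat\alpha d_2d_3/\varepsilon^2$. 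Squaring gives a bound of the form $(\sqrt{K_0}+C/\varepsilon^2)^2$, which is within an absolute multiplicative constant of the stated $\max$ but not literally equal to it; your remark about ``re-absorbing the absolute constants into $\hat\alpha,d_3$'' is the one place where you are a bit loose, since those constants are fixed in the theorem statement. The paper's $\tilde T(\varepsilon)$ device avoids this wrinkle and delivers the exact $\max$ directly. Otherwise your verification of~\eqref{342}, your telescoping of the two $\|y_{k+1}\|^2$ sums via monotonicity of $\{l_{22}^k\}$ and $k^{-1/4}$, and your identification of the $d^{(k)}\sim k^{-1/2}$ scaling all match the paper's argument.
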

	\begin{proof}
		We can easily see that the relations in \eqref{342} are satisfied by the settings of $c_k$, $\gamma_k$ and $\beta_k$. Denote $\alpha_k = \frac{16 (l_{12}^k)^2 \gamma_{k-1}}{c_{k-1}^2}$. Then it follows from the selection of  $  \beta_k$ and $\alpha_k$ that
		\begin{equation}
			\frac{  \beta_k}{2} - \frac{(l^k_{12})^2}{2\gamma_{k-1}}-\frac{16(l^k_{12})^2\gamma_{k-1}}{c_{k-1}^2} = \alpha_k .
		\end{equation}
		This observation, in view of Lemma \ref{lm33}, implies that
		\begin{align}
			&{\alpha_k} \left\| x_{k+1} - x_k \right\|^2 + \frac{19\gamma_k}{10} \left\| y_{k+1} - y_k \right\|^2 \nonumber\\
			\leq& F_k -F_{k+1} 
			+8\left( {\gamma_{k}} \frac{c_{k+1}}{c_k} - {\gamma_{k-1}} \frac{c_k}{c_{k-1}} \right) \|y_{k+1}\|^2 +\frac{c_{k-1}-c_k}{2}\|y_{k+1}\|^2. \label{348}
		\end{align}
		We can easily check from the definition of $ f_k(x, y) $ that
		\begin{equation}
			\| \nabla G_k \| - \| \tilde{\nabla} G_k \| \leq c_k \| y_k \|. 
		\end{equation}
		By replacing $f$ with $f_k$, similar to (\ref{323}) and (\ref{324}), we immediately obtain that
		\begin{equation}
			\|(\tilde{\nabla} G_k)_x \| =   \beta_k    \| x_{k+1} - x_k \|, \label{349}
		\end{equation}
		and
		\begin{equation}
			\| (\tilde{\nabla} G_k)_y \| \leq \gamma_k \| y_{k+1} - y_k \| + l^k_{12} \| x_{k+1} - x_k \|. \label{350}
		\end{equation}
		Combining (\ref{349}) and (\ref{350}), and using the Cauchy-Schwarz inequality, we have
		\begin{equation}
			\| \tilde{\nabla} G_k \|^2 \leq (  \beta_k ^2 + 2(l^k_{12})^2) \| x_{k+1} - x_k \|^2 + 2{{\gamma_k}^2} \| y_{k+1} - y_k \|^2. \label{351}
		\end{equation}
		Since both $\alpha_k$ and $  \beta_k$ are in the same order when $k$ becomes large enough, it then follows from the definition of $\bar{d}_1$ that $\forall k \geq 2$,
		\begin{align}
			\frac{(  \beta_k )^2 + 2({l^k_{12}})^2}{\alpha_k^2} &= 
			\frac{\left(\frac{{l^k_{12}}}{\gamma_{k-1}} + \frac{64 ({l}_{12}^k)^2\gamma_{k-1}}{c_{k-1}^2}   \right)^2 + 2({l}^k_{12})^2}{\alpha_k^2}\nonumber\\ 
			&\leq  \max\limits_{k=2,3\cdots T(\varepsilon)} \{4+ \frac{19^4}{2^{15}\cdot 5^4 }+\frac{19^4\gamma^2_{k}}{2^{15}\cdot5^4({l}_{12}^k)^2} \}= \bar{d}_1\label{352}.
		\end{align}
		Combining the previous two inequalities in (\ref{351}) and (\ref{352}), we obtain
		\begin{equation}
			\| \nabla {\tilde{G}_k}  \|^2 \leq  {\bar{d}_1(\alpha_k)^2}  \| x_{k+1} - x_k \|^2 + 2 \gamma_k^2 \| y_{k+1} - y_k \|^2 \label{353}.
		\end{equation}
		Denote \ {$d_k^{(2)} = \frac{1}{\max\{ \bar{d}_1 \alpha_k, {\frac{19}{20} \gamma_k} \}}$}, $\tilde{c}_k = \frac{19}{20 k^{1/4}}$, by multiplying $d^{(2)}_k$ on the both sides of (\ref{353}), and using (\ref{348}), we have
		\begin{align}
			&d_k^{(2)} \| \nabla \tilde{G}_k \|^2 \nonumber\\
			\leq& F_k - F_{k+1}  +8\left( {\gamma_{k}} \frac{c_{k+1}}{c_k} - {\gamma_{k-1}} \frac{c_k}{c_{k-1}} \right) \|y_{k+1}\|^2 
			+\frac{c_{k-1}-c_k}{2}\|y_{k+1}\|^2\nonumber\\
			\leq& F_k - F_{k+1}  +8\left( {\gamma_{k}} \frac{c_{k+1}}{c_k} - {\gamma_{k-1}} \frac{c_k}{c_{k-1}} \right) \|y_{k+1}\|^2 
			+40L_{22}\frac{\tilde{c}_{k-1}-\tilde{c}_k}{2}\|y_{k+1}\|^2.\label{354}
		\end{align}
		Denoting
		\begin{equation}
			\tilde{T}(\varepsilon) = \min \left\{ k \middle| \left\| \nabla \tilde{G}(x_k, y_k) \right\| \leq \frac{\varepsilon}{2}, k \geq 2 \right\},	
		\end{equation}
		summing both sides of (\ref{354}) from $ k = 2 $ to $ k = \tilde{T}(\varepsilon) $, we then obtain
		\begin{align}
			& \sum_{k=2}^{\tilde{T}{(\varepsilon)}} d^{(2)}_k \left\| \nabla \tilde{G}_k \right\|^2 \nonumber\\
			\leq &  F_2 - F_{\tilde{T}(\varepsilon)}  + 8 \left(  \frac{\gamma_{\tilde{T}(\varepsilon)-1}c_{\tilde{T}(\varepsilon)}}{c_{\tilde{T}(\varepsilon)-1}}-\frac{\gamma_{1}c_2}{ c_1}  \right) \hat{\mathcal{D}}_y^2 + 40L_{22}\left( \frac{\tilde{c}_1}{2} - \frac{\tilde{c}_{\tilde{T}(\varepsilon)}}{2} \right) \hat{\mathcal{D}}^2_y \nonumber\\
			\leq & F_2 -  F_{\tilde{T}(\varepsilon)} + 339L_{22}\hat{\mathcal{D}}_y^2.
			\label{355}
		\end{align}
		Note that by the definition of $F_{k+1}$ in Lemma \ref{lm34}, we have
		\begin{equation}
			\begin{split}
				F_{\tilde{T}(\varepsilon)} &\geq {\underline{f}} -  639L_{22}\hat{\mathcal{D}}_y^2 = \underline{F},
			\end{split}
		\end{equation}
		where $ \underline{f} = \min_{(x, y) \in X \times Y} f(x, y) $. We then conclude from (\ref{355}) that
		\begin{equation}
			\sum_{k=2}^{\infty} d^{(2)}_k \left\| \nabla \tilde{G}_k \right\|^2 \leq F_2 - \underline{F} +339L_{22} \hat{\mathcal{D}}_y^2= d_2. \label{356}
		\end{equation}
		We can see from the selection of  $d_3$ that \ {$ d_3 \geq \max\{\bar{d}_1, \frac{19\gamma_{k}}{20\alpha_k}\} $}, which implies $d^{(2)}_k \geq \frac{1}{d_3\alpha_k}$,  by multiplying $d_3$  on the both sides of (\ref{356}), and combining the definition of  $d_2$,  we have  $\sum_{k=2}^{\infty} \frac{1}{\alpha_k} \left\| \nabla \tilde{G}_k \right\|^2 \leq d_2d_3$,
		which, by the definition of $ \tilde{T}(\varepsilon) $, implies that
		\begin{equation}
			\frac{\varepsilon^2}{4} \leq \frac{d_2d_3}{\sum_{k=2}^{ \tilde{T}{(\varepsilon)} }\frac{1}{\alpha_k}} \leq \frac{d_2d_3}{\sum_{k=2}^{ \tilde{T}{(\varepsilon)} }\frac{1}{\bar{\alpha}}}. \label{357}
		\end{equation}
		Where $\bar{\alpha}= {(k-1)^{1/2}}\hat{\alpha}$, by using the fact $ \sum_{k=2}^{ \tilde{T}{(\varepsilon)}} \frac{1}{\sqrt{k}} \geq \sqrt{\tilde{T}{(\varepsilon)}} - 1$  and (\ref{357}), we conclude\ { $	\frac{\varepsilon^2}{4} \leq \frac{\hat\alpha d_2d_3}{ \sqrt{\tilde{T}(\varepsilon)} -1}$} or equivalently,
		\begin{equation}
			\tilde{T}(\varepsilon)  \leq \left( \frac{ \hat{\alpha} d_2d_3}{ \varepsilon^2}+1\right)^2.
		\end{equation}
		On the other hand, if  $k \geq \frac{76^4L_{22}^{4}\hat{\mathcal{D}}_y^4}{\varepsilon^{4}}$, then  $c_k = \frac{19\gamma_k}{20 k^{1/4}} \leq \frac{\varepsilon}{2\hat{{\mathcal{D}_y}}}$. This inequality together with the definition of  $\hat{\mathcal{D}}_y$, then imply that $c_k \left\| y_k \right\| \leq \frac{\varepsilon}{2}$. Therefore, there exists a
		\begin{align}
			T(\varepsilon) &\leq \max \left( \tilde{T}{(\varepsilon)}, \frac{76^4L_{22}^{4}\hat{\mathcal{D}}_y^4}{\varepsilon^{4}}\right)\nonumber\\
			&\leq \max \left(  \left( \left( \frac{ \hat{\alpha} d_2d_3}{ \varepsilon^2}+1\right)^2+1\right)^2,  \frac{76^4L_{22}^{4}\hat{\mathcal{D}}_y^4}{\varepsilon^{4}} \right),
		\end{align}
		such that $ \left\| \nabla G_k \right\| \leq \left\| \nabla \tilde{G}_k \right\| + c_k \left\| y_k \right\| \leq  \frac{\varepsilon}{2}  + \frac{\varepsilon}{2} = \varepsilon$. 
	\end{proof}
	
	%
	
	
	\begin{remark}
		In PF-AGP-NC algorithm, the total number of backtracking step is upper bounded by $\sum\limits_{k=2}^{T(\varepsilon)} \left(\log_2\left(\frac{l^k_{11}}{l_{11}^{k-1}}\right)+\log_2\left(\frac{l^{k}_{12}}{l_{12}^{k-1}}\right)+\log_2\left(\frac{l^{k}_{22}}{l_{22}^{k-1}}\right)\right)=\log_2\left(\frac{2L_{11}}{l_{11}^{1}}\right)+\log_2\left(\frac{2L_{12}}{l_{12}^{1}}\right)+\log_2\left(\frac{2L_{22}}{l_{22}^{1}}\right)=\mathcal{O}(\log(L))$.
		According to Theorem \ref{th2}, $d_2 = O(L)$, $\bar{d}_1=O(1)$, $d_3 = O(1)$, $\hat{\alpha} = O(\log(L)L)$, hence the total number of  gradient calls of  PF-AGP-NC to obtain an $\varepsilon$-stationary point  that satisfies $\|\nabla G(x_k,y_k)\|\leq \varepsilon$ is $\mathcal{O} ( \log^2(L) L^4\varepsilon^{-4})$ under the nonconvex-concave setting.
	\end{remark}
\begin{remark}
	In the nonconvex-concave setting, when $\varepsilon < \frac{L}{{2}^{\log^2(L) L}}$ holds, $\log^2( L) L^4 \varepsilon^{-4} < L^3 \log(L \varepsilon^{-1}) \varepsilon^{-4}$, which means that the total number of gradient calls for PF-AGP-NC algorithm, i.e., $\mathcal{O}(\log^2(L) L^4 \varepsilon^{-4})$, which is asymptotically smaller than the total number of gradient calls for SGDA-B algorithm, i.e., $\mathcal{O}(L^3 \log(L \varepsilon^{-1}) \varepsilon^{-4})$, especially when $\varepsilon \to 0^+$. On the other hand, SGDA-B algorithm requires additional prior information, notably $ \underline{S} $ and $ \mathcal{D}_y$.
	%
	%
\end{remark}

\section{A completely parameter-free single-loop algorithm for nonconvex-linear minimax problems}\label{secalg2}
In this section, we propose a completely parameter-free AGP algorithm, denoted as {PF-AGP-NL}, for solving nonconvex-linear minimax problems.
The proposed algorithm is similar to PF-AGP-NC, while utilizes the gradient of a regularized variant of the original function, i.e.,
\begin{equation*}
	\tilde{f}_k(x,y) = f(x,y) - \frac{c_k}{2}\|y\|^2-\frac{d_{k}}{2}\|y-y_k\|^2,
\end{equation*}
where $c_k \geq 0$ and $d_k \geq 0$ are regularization parameters.
At the $k$ -th iteration, the variable $x$ is updated by minimizing a linear approximation of $f(x, y_k)$ plus a regularization term, i.e., 
\begin {equation} 
\begin {aligned} x_{k+1} &= \arg \min _{x \in \mathcal {X}} \left \langle \nabla _x f(x_k, y_k), x - x_k \right \rangle + \frac { \beta _k}{2} \| x - x_k \| ^2 \\ &= \mathcal {P}_{ \mathcal {X}} \left ( x_k - \frac {1}{ \beta _k} \nabla _x f(x_k, y_k) \right ), 
\end {aligned} 
\end {equation} 
where $ \mathcal {P}_{ \mathcal {X}}$ denotes the projection operator and $ \beta _k > 0$ is the step-size parameter. Meanwhile, for the variable $y$ , the proposed algorithm solves the following inner maximization subproblem with respect to $y$ using a regularized  function instead of the original $f(x,y)$,
\begin {equation} 
y_{k+1} = \arg \max_{y \in \mathcal {Y}} \tilde {f}_k(x_{k+1}, y).
\end {equation}
Similar to the idea of PF-AGP-NSC and PF-AGP-NC algorithms, we iteratively estimate the Lipschitz constants $L_{11}$ and $L_{12}$. In each iteration of the algorithm, our goal is to find a tuple $(l^{k,i}_{11},l^{k,i}_{12})$ that satisfies conditions \eqref{C1} and \eqref{C2} by backtracking. The proposed algorithm, denoted as PF-AGP-NL, is formally presented in Algorithm \ref{al_nc_l}.
	\begin{algorithm}[t]
		\caption{A parameter-free alternating gradient projection ({PF-AGP-NL}) algorithm for nonconvex-linear minimax problems}
		\begin{algorithmic}
			\STATE \textbf{Step 1:} Input  $x_{1}, y_{1}$, $\beta_{0}$, $\gamma_{0}$, $c_{0}$, $d_{0}$, $l_{11}^{0}$, $l_{12}^{0}$; Set $k=1$.
			\STATE \textbf{Step 2:} {\bf Updata $x_k$ and $y_k$}:
			\STATE\quad  \textbf{(a):}  Set $i=0$,\ { $l_{11}^{k,i} = {l_{11}^{k-1}}$, $l_{12}^{k,i} = {l_{12}^{k-1}}$},  $\beta_{k,i} = \beta_{k-1}$, $c_{k,i} =c_{k-1}$, $d_{k,i} = d_{k-1}$.
			\STATE\quad  \textbf{(b):}  Update $x_{{k,i}}$ and $y_{{k,i}}$: 
			\begin{align}
				x_{{k,i}} &=  \mathcal{P}_\mathcal{X}\left(x_{k} - \frac{1}{\beta_{k,i}} \nabla_x f(x_{k}, y_{k}) \right),\\
				y_{{k,i}} & = \arg \max _{y \in \mathcal {Y}}f(x_{k,i},y) - \frac{c_{k,i}}{2}\|y\|^2-\frac{d_{k,i}}{2}\|y-y_k\|^2, \label{y_update3}.
			\end{align}
			\STATE   \quad \textbf{(c):} \ {Compute $ C_1^{k,i}$ and  $ C_2^{k,i}$, as in \eqref{C1} and  \eqref{C2}};
			\STATE   \quad \textbf{(d):} Update $l^{k,i}_{11}$ and $l^{k,i}_{12}$:
			\begin{align}
				l^{k,i+1}_{11}=\frac{\mbox{sgn} (C_1^{k,i}) +3 }{2}  l^{k,i}_{11},\quad
				l^{k,i+1}_{12}=\frac{\mbox{sgn} (C_2^{k,i}) +3 }{2}  l^{k,i}_{12},
			\end{align}
			\STATE   \quad \textbf{(e):}\ { \textbf{If} $ C_1^{k,i}\leq 0$ and $ C_2^{k,i}\leq 0$},  \textbf{then} 
			\STATE  \qquad  \quad   $x_{k+1} = x_{k,i}$, $y_{k+1} = y_{k,i}$, $l_{11}^{k}=l_{11}^{k,i+1}$, $l_{12}^{k}=l_{12}^{k,i+1}$,  $\beta_{k} = \beta_{k,i}$,
			$c_k = c_{k,i}$, $d_{k} = d_{k,i}$,\STATE  \qquad  \quad  go to \textbf{Step 3};\\ 	\STATE  \qquad  \quad  \textbf{Otherwise}, $i = i+1$, 
			\begin{align}
				\rho_{k,i}=	2 \max\{l_{11}^{k,i},l^{k,i}_{12}\}, \quad
				\beta_{k,i} = 2\rho_{k,i} k^{1/3} + \rho_{k,i}
				, \quad c_{k,i} = \frac{\rho_{k,i}}{k^{1/3}},\quad d_{k,i} =\frac{\rho_{k,i}}{16k^{1/3}}
			\end{align}
			\STATE  \qquad  \quad go to \textbf{Step 2(b)}.
			\STATE \textbf{Step 3:} \textbf{If} some stationary condition is satisfied, \textbf{stop}; \textbf{Otherwise}, set $k = k + 1$, go to \textbf{Step 2}.
		\end{algorithmic}
		\label{al_nc_l}
	\end{algorithm}
	\subsection{Complexity Analysis}
	In this subsection, we analyze the iteration complexity of Algorithm PF-AGP-NL for solving (\ref{p}) under the general nonconvex-linear setting. 
	\begin{lemma} 
		\label{lemma:descent_property}
		Under Assumption \ref{as}, let $\{(x_k, y_k)\}$ be the sequence generated by Algorithm \ref{al_nc_l}. Then we have
		\begin{equation}
			\begin{split}
				&\tilde{f}_{k+1}(x_{k+1}, y_{k+1}) - \tilde{f}_k(x_k, y_k) \\
				\leq & - \left(\beta_{k} - \frac{l_{11}^k}{2} - \frac{(l_{12}^k)2}{c_{k}}\right)\|x_{k+1} - x_k\|^2  \\
				& - (-\frac{c_{k}}{4}+\frac{c_{k-1}}{2}  +\frac{d_k-d_{k-1}}{2})\|y_{k+1} - y_k\|^2 + \frac{c_{k-1} - c_{k+1}}{2}\|y_{k+1}\|^2 \\
				& - \frac{c_{k-1} - c_{k}}{2}\|y_k\|^2+\frac{d_{k-1}}{2}\|y_k-y_{k-1}\|^2.
			\end{split}
		\end{equation}
	\end{lemma}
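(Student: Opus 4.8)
The plan is to follow the two-step descent template already used for Lemmas~\ref{31} and~\ref{lm33}: first control the change in $f$ caused by the $x$-update, then the change caused by the exact inner maximization, and finally repackage everything in terms of the regularized function $\tilde f_k$. As a preliminary reduction, note that $\|y_k-y_k\|=0$ and $\|y_{k+1}-y_{k+1}\|=0$, so the definition of $\tilde f_k$ gives $\tilde f_k(x_k,y_k)=f(x_k,y_k)-\frac{c_k}{2}\|y_k\|^2$ and $\tilde f_{k+1}(x_{k+1},y_{k+1})=f(x_{k+1},y_{k+1})-\frac{c_{k+1}}{2}\|y_{k+1}\|^2$; hence it suffices to bound $f(x_{k+1},y_{k+1})-f(x_k,y_k)$ and carry along the regularization difference $-\frac{c_{k+1}}{2}\|y_{k+1}\|^2+\frac{c_k}{2}\|y_k\|^2$. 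Splitting $f(x_{k+1},y_{k+1})-f(x_k,y_k)=\bigl(f(x_{k+1},y_k)-f(x_k,y_k)\bigr)+\bigl(f(x_{k+1},y_{k+1})-f(x_{k+1},y_k)\bigr)$, the first bracket is handled verbatim by Lemma~\ref{lem}, whose proof uses only the $x$-update and the backtracking inequality \eqref{C1} (both unchanged in Algorithm~\ref{al_nc_l}), giving $f(x_{k+1},y_k)-f(x_k,y_k)\le -(\beta_k-\frac{l_{11}^k}{2})\|x_{k+1}-x_k\|^2$.

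The core is the $y$-term $f(x_{k+1},y_{k+1})-f(x_{k+1},y_k)$. In the nonconvex–linear setting $f(x_{k+1},\cdot)$ is affine, so this equals $\langle\nabla_yf(x_{k+1},y_k),\,y_{k+1}-y_k\rangle$, and I would insert $\pm\,\nabla_yf(x_k,y_k)$. The term $\langle\nabla_yf(x_{k+1},y_k)-\nabla_yf(x_k,y_k),\,y_{k+1}-y_k\rangle$ is bounded, using the backtracking condition \eqref{C2}, the Cauchy–Schwarz inequality, and Young's inequality with weight $\propto c_k$, by $\frac{(l_{12}^k)^2}{c_k}\|x_{k+1}-x_k\|^2+\frac{c_k}{4}\|y_{k+1}-y_k\|^2$; this is precisely where the $\frac{(l_{12}^k)^2}{c_k}$ coefficient of $\|x_{k+1}-x_k\|^2$ comes from. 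The term $\langle\nabla_yf(x_k,y_k),\,y_{k+1}-y_k\rangle$ is controlled through the first-order optimality of the inner maximizer \eqref{y_update3} at iteration $k-1$, namely $\langle\nabla_yf(x_k,y_k)-c_{k-1}y_k-d_{k-1}(y_k-y_{k-1}),\,y_{k+1}-y_k\rangle\le 0$, combined with the optimality of $y_{k+1}$ at iteration $k$ and the $(c_k+d_k)$-strong concavity of the inner objective $\tilde f_k(x_{k+1},\cdot)$, together with the standard quadratic identities $\langle u,v\rangle=\frac12(\|u\|^2+\|v\|^2-\|u-v\|^2)$ and $\langle u-v,w-u\rangle=\frac12(\|w-v\|^2-\|w-u\|^2-\|u-v\|^2)$; this produces the $\|y_{k+1}\|^2$, $\|y_k\|^2$, $\|y_{k+1}-y_k\|^2$ and $\|y_k-y_{k-1}\|^2$ terms with coefficients assembled from $c_{k-1},c_k,d_{k-1},d_k$.

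Adding the $x$-bound and the $y$-bound and absorbing the regularization difference $-\frac{c_{k+1}}{2}\|y_{k+1}\|^2+\frac{c_k}{2}\|y_k\|^2$, the $\|x_{k+1}-x_k\|^2$ contributions collapse to $-(\beta_k-\frac{l_{11}^k}{2}-\frac{(l_{12}^k)^2}{c_k})$, the $\|y_{k+1}\|^2$ contributions to $\frac{c_{k-1}-c_{k+1}}{2}$, the $\|y_k\|^2$ contributions to $-\frac{c_{k-1}-c_k}{2}$, and the remainder to the stated $\|y_{k+1}-y_k\|^2$ and $\frac{d_{k-1}}{2}\|y_k-y_{k-1}\|^2$ coefficients. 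I expect the main obstacle to be exactly this bookkeeping in the $y$-step: pairing each optimality inequality with the correct test point and choosing the Young-inequality weights so that the many quadratic-in-$y$ terms telescope into precisely the stated form — in particular so that the cross term keeps coefficient exactly $\frac{(l_{12}^k)^2}{c_k}$ (so the step size $\beta_k$ selected in Algorithm~\ref{al_nc_l} renders it negative) and so that the $\|y_{k+1}-y_k\|^2$ coefficient comes out sharply as $-(-\frac{c_k}{4}+\frac{c_{k-1}}{2}+\frac{d_k-d_{k-1}}{2})$, which is where I anticipate the second optimality condition and the strong-concavity modulus $c_k+d_k$ being essential rather than a looser one-sided estimate. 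The rest of the argument is routine algebra.
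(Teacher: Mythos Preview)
Your plan is essentially the paper's proof: it too splits $\tilde f_{k+1}(x_{k+1},y_{k+1})-\tilde f_k(x_k,y_k)$ into an $x$-part handled by Lemma~\ref{lem} and a $y$-part, expands the $y$-increment via $\langle\nabla_y\tilde f_k(x_{k+1},y_k),y_{k+1}-y_k\rangle$ (equivalently your linearity identity for $f$), inserts the optimality condition of the $(k{-}1)$-st subproblem, bounds the cross term by \eqref{C2} and Young with weight $c_k$, and applies the polarization identity to $\langle y_k,y_{k+1}-y_k\rangle$ together with Cauchy--Schwarz on $d_{k-1}\langle y_k-y_{k-1},y_{k+1}-y_k\rangle$. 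The only nuance is that the paper never invokes the optimality of $y_{k+1}$ at step $k$; the single previous optimality condition plus the $(c_k+d_k)$-strong concavity of $\tilde f_k(x_{k+1},\cdot)$ (which in the linear setting is just the quadratic regularizer and is already captured when you reattach $-\frac{c_{k+1}}{2}\|y_{k+1}\|^2+\frac{c_k}{2}\|y_k\|^2$) suffices, so you can drop that extra ingredient.
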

	\begin{proof}
		First, by the strong concavity of ${f}_k(x, y)$ with respect to $y$, we have
		\begin{equation}\label{eq:strong_concavity}
			\tilde{f}_k(x_{k+1}, y_{k+1}) - \tilde{f}_k(x_{k+1}, y_k) 
			\leq \langle \nabla_y \tilde{f}_k(x_{k+1}, y_k), y_{k+1} - y_k \rangle 
			- \frac{c_k+d_k}{2}\|y_{k+1} - y_k\|^2. 
		\end{equation}
		From the optimality condition of \eqref{y_update3} in Algorithm 1, we obtain
		\begin{equation}
			\langle \nabla_y \tilde{f}_{k-1}(x_k, y_k), y - y_k \rangle \leq 0.
		\end{equation}
		Let $y = y_{k+1}$ and we then get
		\begin{equation}
			\langle \nabla_y \tilde{f}_{k-1}(x_k, y_k), y_{k+1} - y_k \rangle \leq 0. \label{eq:optimal condition}
		\end{equation}
		Substituting \eqref{eq:optimal condition} into \eqref{eq:strong_concavity}, we get
		\begin{equation}
			\begin{split}
				\tilde{f}_k(x_{k+1}, y_{k+1}) - \tilde{f}_k(x_{k+1}, y_k) 
				\leq& \langle \nabla_y \tilde{f}_k(x_{k+1}, y_k) - \nabla_y \tilde{f}_{k-1}(x_k, y_k), y_{k+1} - y_k \rangle\\
				&- \frac{c_k+d_k}{2}\|y_{k+1} - y_k\|^2.   \label{eq:intermediate_inequality}
			\end{split}
		\end{equation}
		By the Cauchy-Schwarz inequality and the backtracking strategy \eqref{C2}, we obtain
		\begin{equation}
			\langle \nabla_y \tilde{f}(x_{k+1}, y_k) - \nabla_y \tilde{f}(x_k, y_k), y_{k+1} - y_k \rangle 
			\leq \frac{(l_{12}^k)^2}{c_{k}}\|x_{k+1} - x_k\|^2 + \frac{c_{k}}{4}\|y_{k+1} - y_k\|^2.  \label{eq:cauchy_schwarz}
		\end{equation}
		Moreover, we have
		\begin{equation}
			\begin{split}
				\langle y_k, y_{k+1} - y_k \rangle 
				= \frac{1}{2}\|y_{k+1}\|^2 - \frac{1}{2}\|y_k\|^2 - \frac{1}{2}\|y_{k+1} - y_k\|^2. \label{eq:norm_identity}
			\end{split}
		\end{equation}
		Combining \eqref{eq:cauchy_schwarz}-\eqref{eq:norm_identity} with \eqref{eq:intermediate_inequality} and applying the Cauchy-Schwarz inequality, we obtain
		\begin{equation}
			\begin{split}
				& \tilde{f}_k(x_{k+1}, y_{k+1}) - \tilde{f}_k(x_{k+1}, y_k) \\
				\leq& \frac{(l_{12}^k)^2}{c_{k}}\|x_{k+1} - x_k\|^2 - (-\frac{c_{k}}{4}+\frac{c_{k-1}}{2} +\frac{d_k-d_{k-1}}{2})\|y_{k+1} - y_k\|^2\\ 
				&+\frac{d_{k-1}}{2}\|y_k-y_{k-1}\| +\frac{c_{k-1}-c_k}{2}(\|y_{k+1}\|^2-\|y_{k}\|^2). \label{eq:combined_inequality}
			\end{split}
		\end{equation}
		From Lemma \ref{lemma:estimating_function_value_changes} and \eqref{eq:combined_inequality}, we obtain
		\begin{align}
			\tilde{f}_k(x_{k+1}, y_{k+1}) - \tilde{f}_k(x_k, y_k)
			\leq &- \left(\beta_{k} - \frac{l_{11}^k}{2} - \frac{(l_{12}^k)^2}{c_{k}}\right) \|x_{k+1} - x_k\|^2
			+\frac{d_{k-1}}{2}\|y_k-y_{k-1}\|\nonumber\\
			&- (-\frac{c_{k}}{4}+\frac{c_{k-1}}{2}  +\frac{d_k-d_{k-1}}{2})\|y_{k+1} - y_k\|^2\nonumber\\
			& + \frac{c_{k-1} - c_k}{2}\left(\|y_{k+1}\|^2 - \|y_k\|^2\right). \label{eq:lemma_application}
		\end{align}
		Finally, combining the definition of $f_k(x, y)$ with \eqref{eq:lemma_application}, we conclude that
		\begin{align}
			\tilde{f}_{k+1}(x_{k+1}, y_{k+1}) - \tilde{f}_k(x_k, y_k) 
			\leq &- \left(\beta_{k} - \frac{l_{11}^k}{2} - \frac{(l_{12}^k)^2}{c_{k}}\right)\|x_{k+1} - x_k\|^2 
			+\frac{d_{k-1}}{2}\|y_k-y_{k-1}\|\nonumber\\
			&+ \frac{c_{k-1} - c_{k+1}}{2}\|y_{k+1}\|^2 - \frac{c_{k-1} - c_k}{2} \|y_k\|^2\nonumber\\
			&- (-\frac{c_{k}}{4}+\frac{c_{k-1}}{2}  -\frac{d_{k-1}}{2})\|y_{k+1} - y_k\|^2,
		\end{align}
		which completes the proof.
	\end{proof}
	\begin{theorem} \label{thm:convergence_rate}
		Suppose Assumption \ref{as} holds. Set
		\begin{equation}
			H_{k+1} := \tilde{f}_{k+1}(x_{k+1}, y_{k+1}) +\frac{d_{k}}{2}\|y_{k+1}-y_k\|^2- \frac{c_k - c_{k+1}}{2} \|y_{k+1}\|^2.
		\end{equation}
		For any given $\varepsilon > 0$, choose
		\begin{equation}
			c_k = \frac{\rho_k}{k^{1/3}}, \quad d_k=\frac{c_k}{8}, \quad \beta_{k} = 2\rho_k k^{1/3} + \rho_k \label{eq:parameter_setting}.
		\end{equation}
		Let $\rho_k = 2 \max\{l_{11}^k,l^k_{12}\}$, then
		\begin{equation}
			T(\varepsilon) \leq \max \left\{ \left( \frac{H_2 - \underline{H} + 11 \hat{\mathcal{D}}_y^2}{\tau \varepsilon^2} + 1 \right)^{3/2}, \left(\frac{8L \hat{\mathcal{D}}_y}{\varepsilon}\right)^3 \right\},
		\end{equation}
		where $\tau = \frac{1}{2^6 3^2 L}$, $\underline{H} = \underline{F} - 4{L} \hat{\mathcal{D}}_y^2$, $\underline{F} = \min_{(x, y) \in \mathcal{X} \times \mathcal{Y}} \tilde{F}(x, y)$ and $\hat{\mathcal{D}}_y = \max\{\|y\| \, | y \in Y\}$.
	\end{theorem}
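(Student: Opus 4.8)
The plan is to follow the same telescoping-plus-pigeonhole scheme used for Theorems~\ref{th1} and \ref{th2}, but with the Lyapunov function $H_{k+1}$ replacing $F_{k+1}$. First I would verify that the parameter choices in \eqref{eq:parameter_setting} make the coefficient of $\|x_{k+1}-x_k\|^2$ in Lemma~\ref{lemma:descent_property} strictly positive and, in fact, comparable to $\beta_k$: since $\beta_k = 2\rho_k k^{1/3}+\rho_k$, $c_k=\rho_k/k^{1/3}$, and $\rho_k=2\max\{l_{11}^k,l_{12}^k\}\ge l_{11}^k$, one checks that $\beta_k-\tfrac{l_{11}^k}{2}-\tfrac{(l_{12}^k)^2}{c_k}\ge \tau' \rho_k k^{1/3}$ for an absolute constant $\tau'>0$ (using $(l_{12}^k)^2/c_k = (l_{12}^k)^2 k^{1/3}/\rho_k \le \tfrac12 l_{12}^k k^{1/3} \le \tfrac14\rho_k k^{1/3}$). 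Next I would rewrite the bound of Lemma~\ref{lemma:descent_property} in terms of $H_{k+1}-H_k$: the key is that the terms $\tfrac{c_{k-1}-c_{k+1}}{2}\|y_{k+1}\|^2$, $-\tfrac{c_{k-1}-c_k}{2}\|y_k\|^2$ and $\tfrac{d_{k-1}}{2}\|y_k-y_{k-1}\|^2$ (which appear in the descent lemma) are absorbed, up to controllable residuals, by the defining terms $\tfrac{d_k}{2}\|y_{k+1}-y_k\|^2 - \tfrac{c_k-c_{k+1}}{2}\|y_{k+1}\|^2$ inside $H_{k+1}$, leaving a clean recursion of the form
\begin{equation*}
	H_{k+1}-H_k \le -\Big(\beta_k-\tfrac{l_{11}^k}{2}-\tfrac{(l_{12}^k)^2}{c_k}\Big)\|x_{k+1}-x_k\|^2 - a_k\|y_{k+1}-y_k\|^2 + b_k\hat{\mathcal D}_y^2,
\end{equation*}
where $a_k>0$ and the $b_k\hat{\mathcal D}_y^2$ absorbs the telescoping remainders of the $c$- and $d$-sequences whose total sum over $k$ is $O(\hat{\mathcal D}_y^2)$ (this is where the constant $11\hat{\mathcal D}_y^2$ comes from, analogous to the $339L_{22}\hat{\mathcal D}_y^2$ in Theorem~\ref{th2}).

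Then I would relate $\|\tilde{\nabla} G_k\|^2$ (the stationarity gap of the regularized function $\tilde f_k$) to $\|x_{k+1}-x_k\|^2$ and $\|y_{k+1}-y_k\|^2$ exactly as in \eqref{349}--\eqref{351}, obtaining $\|\tilde{\nabla} G_k\|^2 \le (\beta_k^2+2(l_{12}^k)^2)\|x_{k+1}-x_k\|^2 + 2\gamma_k^2\|y_{k+1}-y_k\|^2$ (with $\gamma_k = c_k+d_k$ the effective curvature for the $y$-step here, since the inner problem is solved exactly); since $\beta_k = \Theta(\rho_k k^{1/3})$ and $\beta_k-\tfrac{l_{11}^k}{2}-\tfrac{(l_{12}^k)^2}{c_k}=\Theta(\rho_k k^{1/3})$, the ratio $(\beta_k^2+2(l_{12}^k)^2)/(\beta_k-\tfrac{l_{11}^k}{2}-\tfrac{(l_{12}^k)^2}{c_k})$ is $\Theta(\rho_k k^{1/3}) = \Theta(L k^{1/3})$ after backtracking has stabilized $\rho_k\le 4L$. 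Multiplying the recursion by the appropriate $d^{(3)}_k = \Theta(1/(Lk^{1/3}))$ and telescoping from $k=2$ to $\tilde T(\varepsilon)$, where $\tilde T(\varepsilon)=\min\{k\ge 2 : \|\tilde{\nabla}G(x_k,y_k)\|\le \varepsilon/2\}$, gives $\sum_{k=2}^{\tilde T(\varepsilon)} \tfrac{1}{Lk^{1/3}}\,\|\tilde{\nabla}G_k\|^2 \lesssim H_2-\underline H + 11\hat{\mathcal D}_y^2$. Using $H_{\tilde T(\varepsilon)}\ge \underline H$ (which follows from $\underline F = \min \tilde F$, $\underline H = \underline F - 4L\hat{\mathcal D}_y^2$, and $S$-type nonnegativity), and the elementary bound $\sum_{k=2}^{T}k^{-1/3}\ge \tfrac{3}{2}T^{2/3} - C$, I get $\tfrac{\varepsilon^2}{4}\cdot T^{2/3} \lesssim \tau^{-1}(H_2-\underline H+11\hat{\mathcal D}_y^2)$, i.e. $\tilde T(\varepsilon)\le \big(\tfrac{H_2-\underline H+11\hat{\mathcal D}_y^2}{\tau\varepsilon^2}+1\big)^{3/2}$.

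Finally, to convert a bound on $\|\tilde{\nabla}G_k\|$ into one on the true stationarity gap $\|\nabla G_k\|$, I use $\|\nabla G_k\|\le \|\tilde{\nabla}G_k\| + (c_k+d_k)\|y_k\|$ (from the definition of $\tilde f_k$, noting the $d_k\|y-y_k\|^2$ term vanishes in gradient at $y=y_k$), and observe that $(c_k+d_k)\|y_k\|\le \tfrac{9}{8}c_k\hat{\mathcal D}_y = \tfrac{9\rho_k}{8k^{1/3}}\hat{\mathcal D}_y \le \varepsilon/2$ once $k\ge (8L\hat{\mathcal D}_y/\varepsilon)^3$ (using $\rho_k\le 4L$). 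Taking $T(\varepsilon)$ to be the larger of this threshold and $\tilde T(\varepsilon)$ yields the stated bound. The main obstacle I anticipate is the bookkeeping in the second paragraph: one must carefully choose which pieces of $H_{k+1}$ cancel which sign-indefinite terms (in particular the $-\tfrac{c_{k-1}-c_k}{2}\|y_k\|^2$ and $\tfrac{d_{k-1}}{2}\|y_k-y_{k-1}\|^2$ residuals), and show the leftover drift terms $b_k\hat{\mathcal D}_y^2$ form a convergent telescoping series with the explicit constant $11$, while simultaneously keeping the $\|y_{k+1}-y_k\|^2$ coefficient nonnegative — this requires the specific ratios $d_k=c_k/8$ and $\beta_k = 2\rho_k k^{1/3}+\rho_k$ and is the delicate part of the argument.
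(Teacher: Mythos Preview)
Your proposal follows the same architecture as the paper's proof (descent recursion for $H_k$ from Lemma~\ref{lemma:descent_property}, gradient-mapping bound, multiply by a weight $\sim 1/(\rho_k k^{1/3})$, telescope, then remove the regularization via $c_k\hat{\mathcal D}_y\le \varepsilon/2$), and the scaling analysis is correct. There is, however, one step that does not go through as written: you cannot bound $\|(\tilde\nabla G_k)_y\|$ ``exactly as in \eqref{349}--\eqref{351}'' with $\gamma_k=c_k+d_k$, because those inequalities rely on $y_{k+1}$ being a single projected-gradient step $y_{k+1}=\mathcal{P}_\mathcal{Y}\big(y_k+\tfrac1{\gamma_k}\nabla_y f_k(x_{k+1},y_k)\big)$, which is \emph{not} how $y_{k+1}$ is produced in Algorithm~\ref{al_nc_l} (it is the exact maximizer of $\tilde f_k(x_{k+1},\cdot)$). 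The paper instead defines the $y$-component of the gap with stepsize $\rho_k$ and uses the fixed-point identity $y_{k+1}=\mathcal{P}_\mathcal{Y}\big(y_{k+1}+\tfrac1{\rho_k}\nabla_y\tilde f_k(x_{k+1},y_{k+1})\big)$ as an intermediate point, yielding $\|(\tilde\nabla G_k)_y\|\le l_{12}^k\|x_{k+1}-x_k\|+(2\rho_k+c_k)\|y_{k+1}-y_k\|$ and hence the coefficient $18\rho_k^2$ (not $2(c_k+d_k)^2$) in front of $\|y_{k+1}-y_k\|^2$. This fix does not alter your final complexity, since the $x$-part $\beta_k^2\big/\big(\beta_k-\tfrac{l_{11}^k}{2}-\tfrac{(l_{12}^k)^2}{c_k}\big)\sim \rho_k k^{1/3}$ already dictates the multiplier $m_k c_k=\tfrac{1}{144\rho_k k^{1/3}}$, and the remainder of your argument (including the $(8L\hat{\mathcal D}_y/\varepsilon)^3$ threshold and the lower bound $H_{\tilde T(\varepsilon)+1}\ge\underline H$) matches the paper.
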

	\begin{proof}
		From Lemma \ref{lemma:descent_property} and the definition of $H_{k+1}$, we get
		\begin{equation}
			\begin{split}
				&\left(\beta_k - \frac{l_{11}^k}{2} - \frac{(l_{12}^k)^2}{c_{k}}\right) \|x_{k+1} - x_k\|^2 + (\frac{c_{k}}{4}-d_k)\|y_{k+1} - y_k\|^2 \\
				\leq &H_k - H_{k+1} + \frac{c_{k-1} - c_{k}}{2} \|y_{k+1}\|^2+\frac{c_{k}-c_{k-1}}{2}\|y_{k+1} - y_k\|^2+\frac{d_{k-1}-d_{k}}{2}\|y_{k+1} - y_k\|^2. \label{eq:descent_inequality}
			\end{split}
		\end{equation}
		By setting the parameters of $\beta_k$, we can easily obtain
		\begin{equation}
			\beta_k - \frac{l_{11}^k}{2} - \frac{(l_{12}^k)^2}{c_{k}} \geq \frac{\rho_k^2}{c_{k}} \quad \text{and} \quad \frac{c_{k}}{4}-d_k = \frac{c_k}{8}\label{eq:beta_bound}
		\end{equation}
		Combined with \eqref{eq:descent_inequality}-\eqref{eq:beta_bound}, we get
		\begin{equation}
			\begin{split}
				&\frac{\rho_k^2}{c_{k}} \|x_{k+1} - x_k\|^2 +\frac{c_{k}}{8} \|y_{k+1} - y_k\|^2\\ \leq& H_k - H_{k+1} + \frac{c_{k-1} - c_{k}}{2} \|y_{k+1}\|^2+\frac{c_{k}-c_{k-1}}{2}\|y_{k+1} - y_k\|^2+\frac{d_{k-1}-d_{k}}{2}\|y_{k+1} - y_k\|^2. \label{eq:simplified_inequality}
			\end{split}
		\end{equation}
		According to \eqref{x_update}, we immediately get
		\begin{equation}
			\|\left(\nabla \tilde{G}_k\right)_x\| = \beta_k\|x_{k+1} - x_k\|. \label{eq:gradient_x}
		\end{equation}
		On the other hand, according to \eqref{y_update} and the triangle inequality, we can prove that
		\begin{equation}
			\begin{split}
				\|\left(\nabla \tilde{G}_k\right)_y\| &\leq {\rho_k} \|y_{k} - A + A - B\| \\
				&\leq l_{12}^k\|x_{k+1} - x_k\| + \left(2\rho_k + c_k\right) \|y_{k+1} - y_k\|, \label{eq:gradient_y}
			\end{split}
		\end{equation}
		where $A = \mathcal{P}_y\left(y_{k+1} + \frac{1}{\rho_k} \nabla_y f_k(x_{k+1}, y_{k+1})\right)$ and $B = \mathcal{P}_y\left(y_k + \frac{1}{\rho_k} \nabla_y f_k(x_k, y_k)\right)$.
		Combining \eqref{eq:gradient_x}, \eqref{eq:gradient_y} and \eqref{C2}, and applying the Cauchy-Schwarz inequality and $c_k \leq \rho_k$, we get
		\begin{equation}
			\begin{split}
				\|\nabla \tilde{G}_k\|^2 &\leq \left(\beta_k^2 + 2(l_{12}^k)^2\right) \|x_{k+1} - x_k\|^2 + 2\left(2\rho_k + c_k\right)^2 \|y_{k+1} - y_k\|^2 \\
				&\leq \left(\beta_k^2 + 2(l_{12}^k)^2\right) \|x_{k+1} - x_k\|^2 + 18\rho_k^2 \|y_{k+1} - y_k\|^2. \label{eq: gradient_bound}
			\end{split}
		\end{equation}
		Multiplying both sides of \eqref{eq: gradient_bound} by $m_k c_{k}$, combined with \eqref{eq:simplified_inequality} and using the definition of $\hat{\mathcal{D}}_y$, we get
		\begin{equation}
			\begin{split}
				m_k c_{k} \|\nabla \tilde{G}_{k+1}\|^2 &\leq \frac{\rho_k^2}{c_{k}} \|x_{k+1} - x_k\|^2 + \frac{c_{k}}{8} \|y_{k+1} - y_k\|^2\\
				&\leq H_k - H_{k+1} + \frac{c_{k-1} - c_{k}}{2} \|y_{k+1}\|^2+\frac{c_{k}-c_{k-1}}{2}\|y_{k+1} - y_k\|^2\\&\quad+\frac{d_{k-1}-d_{k}}{2}\|y_{k+1} - y_k\|^2\\
				&\leq H_k - H_{k+1} + 4L({\bar{c}_{k-1} - \bar{c}_{k}}) \hat{\mathcal{D}}_y^2+\frac{7}{4}(\rho_{k}-\rho_{k-1})\hat{\mathcal{D}}_y^2, \label{eq:combined_bound}
			\end{split}
		\end{equation}
		where $m_k = \min\left\{\frac{\rho^2_{k}}{c^2_{k}(2(l_{12}^k)^2 + \beta^2_{k})}, \frac{1}{144\rho^2_k}\right\} = \frac{1}{144\rho^2_k}$ and $\bar{c}_k = \frac{1}{k^{1/3}}$.
		Define $\tilde{T}(\varepsilon) := \min \{k \mid \|\nabla \tilde{G}(x_k, y_k)\| \leq \frac{\varepsilon}{2}, k \geq 2\}$. Summing \eqref{eq:combined_bound} from $k = 2$ to $\tilde{T}(\varepsilon)$, and using $c_k \leq \rho_k$, we obtain
		\begin{equation}
			\sum_{k=2}^{\tilde{T}(\varepsilon)} m_k c_{k} \|\nabla \tilde{G}_{k+1}\|^2 \leq \sum_{k=2}^{\tilde{T}(\varepsilon)} \left(H_k - H_{k+1} +4L({\bar{c}_{k-1} - \bar{c}_{k}}) \hat{\mathcal{D}}_y^2+11(\rho_{k}-\rho_{k-1})\hat{\mathcal{D}}_y^2,\right).
		\end{equation}
		Further simplification yields
		\begin{equation}
			\sum_{k=2}^{\tilde{T}(\varepsilon)} m_k c_{k} \|\nabla \tilde{G}_{k+1}\|^2 \leq H_2 - H_{\tilde{T}(\varepsilon)+1} + 11L \hat{\mathcal{D}}_y^2. \label{eq:sum_inequality}
		\end{equation}
		From the definition of $H_{k+1}$, we have
		\begin{equation}
			\begin{split}
				H_{\tilde{T}(\varepsilon)+1} &= f_{\tilde{T}(\varepsilon)+1}(x_{\tilde{T}(\varepsilon)+1}, y_{\tilde{T}(\varepsilon)+1}) +\frac{d_{\tilde{T}(\varepsilon)+1}}{2}\|y_{\tilde{T}(\varepsilon)+1}-y_{\tilde{T}(\varepsilon)}\|^2- \frac{c_{\tilde{T}(\varepsilon)} - c_{\tilde{T}(\varepsilon)+1}}{2} \|y_{\tilde{T}(\varepsilon)+1}\|^2 \\
				&\geq \underline{F} - {\rho_{\tilde{T}(\varepsilon)}} \hat{\mathcal{D}}_y^2 \geq \underline{F} - 4{L} \hat{\mathcal{D}}_y^2 = \underline{H}. \label{eq:lower_bound}
			\end{split}
		\end{equation}
		Combining \eqref{eq:lower_bound} with \eqref{eq:sum_inequality}, we then obtain
		\begin{equation}
			\sum_{k=2}^{\tilde{T}(\varepsilon)} \frac{{\tau}}{{k}^{1/3}} \|\nabla \tilde{G}_{k+1}\|^2\leq\frac{1}{144\rho_k{k}^{1/3} }  \|\nabla \tilde{G}_{k+1}\|^2 = m_k c_{k}\|\nabla \tilde{G}_{k+1}\|^2\leq H_2 - \underline{H} +11 L \hat{\mathcal{D}}_y^2. \label{eq:final_inequality}
		\end{equation}
		According to the setting of $c_k$ and \eqref{eq:final_inequality}, when $k \geq 2$, we can get
		\begin{equation}
			\begin{split}
				\frac{\varepsilon^2}{4} &\leq \frac{H_2 - \underline{H} + 11L\hat{\mathcal{D}}_y^2}{\sum^{\tilde{T(\varepsilon)}}_{k=2} \frac{\tau}{k^{1/3}}} \\
				&\leq \frac{H_2 - \underline{H} + 11L \hat{\mathcal{D}}_y^2}{\tau(\tilde{T}(\varepsilon)^{2/3} - 1)},
			\end{split}
		\end{equation}
		and then
		\begin{equation}
			\tilde{T}(\varepsilon) \leq \left( \frac{H_2 - \underline{H} + 11L \hat{\mathcal{D}}_y^2}{\tau \varepsilon^2} + 1 \right)^{3/2}.
		\end{equation}
		On the other hand, when $k \geq \left( \frac{8L \hat{\mathcal{D}}_y}{\varepsilon} \right)^3 \geq \left( \frac{2\rho_k \hat{\mathcal{D}}_y}{\varepsilon} \right)^3$, the parameter setting of $c_k$ ensures
		\begin{equation}
			c_k \hat{\mathcal{D}}_y\leq \frac{\varepsilon}{2}.
		\end{equation}
		Therefore, there exists $T(\varepsilon) \leq \max \left( \tilde{T}(\varepsilon), \left( \frac{8L \hat{\mathcal{D}}_y}{\varepsilon} \right)^3 \right)$ such that
		\begin{equation}
			\| \nabla G_{k+1} \| \leq \| \nabla \tilde{G}_{k+1} \| + c_k \hat{\mathcal{D}}_y \leq \frac{\varepsilon}{2} + \frac{\varepsilon}{2} = \varepsilon,
		\end{equation}
		which completes the proof.
	\end{proof}
	\begin{remark}
		In the PF-AGP-NL algorithm, the upper bound of the total number of backtracking steps is \ {$\sum\limits_{k=2}^{T(\varepsilon)} \left(\log_2\left(\frac{l^k_{11}}{l_{11}^{k-1}}\right)+\log_2\left(\frac{l^{k}_{12}}{l_{12}^{k-1}}\right)\right)=\log_2\left(\frac{2L_{11}}{l_{11}^{1}}\right)+\log_2\left(\frac{2L_{12}}{l_{12}^{1}}\right)=\mathcal{O}(\log(L))$}, and the upper bound of the total number of line searches is $\mathcal{O}(L)$. According to Theorem \ref{thm:convergence_rate}, in the nonconvex-linear setting, the number of gradient evaluations required by the PF-AGP-NL algorithm to obtain an $\varepsilon$ -stationary point is $\mathcal{O}(L^3 \varepsilon^{-3})$.
	\end{remark}
	
	\section{Numerical results} \label{senu}
	In this section, we report some numerical results to show the efficiency of the proposed {PF-AGP-NSC} and {PF-AGP-NC} algorithms. All the numerical tests are implemented in Python 3.12 and run with an NVIDIA RTX 4090D GPU (24GB).
	
	\subsection{Synthetic Minimax Problem}
	Synthetic minimax problem can be formulated as the following nonconvex-strongly concave minimax problem \citep{luo2022finding}:
	\begin{equation}
		\min_{\mathbf{x} \in \mathbb{R}^3} \max_{\mathbf{y} \in \mathbb{R}^2} f(\mathbf{x}, \mathbf{y}) = w(x_3) - \frac{y_1^2}{40} + x_1 y_1 - \frac{5 y_2^2}{2} + x_2 y_2, \label{testprob1}
	\end{equation}
	where $\mathbf{x} = [x_1, x_2, x_3]^\top$, $\mathbf{y} = [y_1, y_2]^\top$ and 
	{\small\begin{equation*}
			w(x) =
			\begin{cases} 
				\sqrt{\epsilon} \left( x + (\lambda + 1)\sqrt{\epsilon} \right)^2 - \frac{1}{3} \left( x + (\lambda + 1)\sqrt{\epsilon} \right)^3 - \frac{1}{3} (3\lambda + 1) \epsilon^{3/2}, & x \leq -\lambda\sqrt{\epsilon}; \\
				\epsilon x + \frac{\epsilon^{3/2}}{3}, & -\lambda\sqrt{\epsilon} < x \leq -\sqrt{\epsilon}; \\
				-\sqrt{\epsilon} x^2 - \frac{x^3}{3}, & -\sqrt{\epsilon} < x \leq 0; \\
				-\sqrt{\epsilon} x^2 + \frac{x^3}{3}, & 0 < x \leq \sqrt{\epsilon};\\
				-\epsilon x + \frac{\epsilon^{3/2}}{3}, & \sqrt{\epsilon} < x \leq \lambda\sqrt{\epsilon}; \\
				\sqrt{\epsilon} \left( x - (\lambda + 1)\sqrt{\epsilon} \right)^2 + \frac{1}{3} \left( x - (\lambda + 1)\sqrt{\epsilon} \right)^3 - \frac{1}{3} (3\lambda + 1) \epsilon^{3/2}, & \lambda\sqrt{\epsilon} \leq x.
			\end{cases}
	\end{equation*}}
	It is easy to verify that \eqref{testprob1} has a strict saddle point at $(\mathbf{x}^*, \mathbf{y}^*) = ([0, 0, 0]^\top, [0, 0]^\top)$. 
	
	\textbf{Experimental setup}. We compare the proposed {PF-AGP-NSC} algorithm with the SGDA-B algorithm, the NeAda algorithm, the TiAda algorithm and the AGP algorithm for solving \eqref{testprob1} with $\epsilon = 0.01$ and $\lambda= 5$. The initial point is chosen as $(x_0, y_0)=([0, 0, 2]^\top, [0, 0]^\top)$ for all four tested algorithms. For the {PF-AGP-NSC} algorithm, we set $l_{11}^{1}=0.01$ , $l_{12}^{1}=0.01$, $l_{22}^{1}=0.01$ and $\mu_1=0.01$. For the SGDA-B algorithm, we set $\mu = 1$, $\gamma = 0.9$, $N = 1$, $p=0.5$ and $\tilde{L} = \frac{10}{9}$. For the NeAda algorithm, we set $\eta_x=0.5$ and $\eta_y=0.1$. For the TiAda algorithm, we set $\alpha = 0.6$, $\beta = 0.4$, $\eta_x =0.8 $, $\eta_y = 0.4$, $v^x_0 = v^y_0 = 1$. For the AGP algorithm, we set $\alpha_x = 0.14$, $\beta_y = 1.1$. The algorithms terminate when $\|\nabla f(x_k, y_k)\|\le 10^{-5}$ satisfied.
	
	\begin{figure}[h]
		\centering
		\includegraphics[width=220pt]{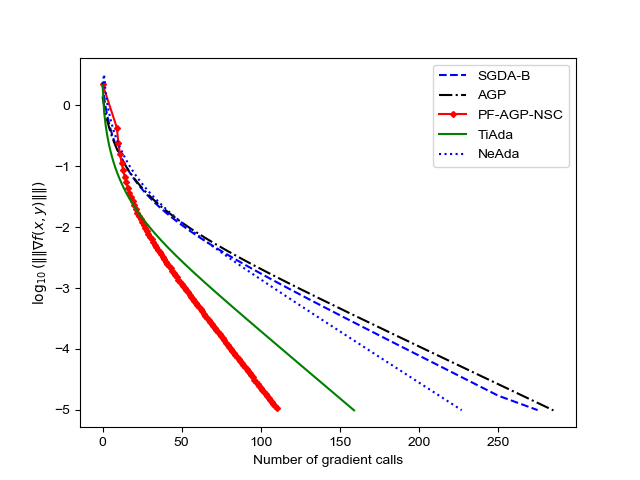}
		\caption{Numerical Performance of five tested algorithms for solving the Synthetic Minimax problem}
		\label{fig-NCSC}
	\end{figure}
	
	\textbf{Results}. Figure \ref{fig-NCSC} shows the stationarity gap of all four test algorithms, which demonstrates that the proposed {PF-AGP-NSC} algorithm exhibits significantly better convergence performance than that of the SGDA-B algorithm, TiAda algorithm and AGP algorithm.
	

	\subsection{Dirac-GAN problem}
	
	The Dirac-GAN problem can be formulated as the following nonconvex-concave minimax problem \citep{Mescheder2018}:
	\begin{equation}
		\min_x \max_y L(x, y) =-\log(1 + \exp(-xy)) + \log 2, 
	\end{equation}
	where $(0, 0)$ is the unique stationary point.
	
	\textbf{Experimental setup}.
	We compare the PF-AGP-NC algorithm with the SGDA-B algorithm and the AGP algorithm. The initial point of all algorithms is chosen as $(1, 1)$. For the {PF-AGP-NC} algorithm,  we set $l_{11}^{1}=0.01$ , $l_{12}^{1}=1$ and $l_{22}^{1}=0.01$.  For the SGDA-B algorithm, we set $\mu = 1$, $\gamma = 0.5$, $N = 1$, $p= 0.5$ and $\tilde{L}$ = 1. For the AGP algorithm, we set $\alpha_x = \frac{0.8}{\sqrt{k}}$, $\beta_y = 0.3$ and $c_k =\frac{0.5}{k^{1/4}}$.
	The algorithms terminate when $\|\nabla f(x_k, y_k)\|\le 10^{-5}$ satisfied.

	\textbf{Results}. Figure \ref{fig:example} shows the stationarity gap and the sequences generated by all three test algorithms. We find
	that the SGDA-B algorithm fails to converge, both the AGP algorithm and the {PF-AGP-NC} algorithms successfully approach the unique stationary point. Moreover, the {PF-AGP-NC} algorithm exhibits significantly better convergence performance than that of AGP algorithm.
	\begin{figure}[h]
		\centering
		\subfigure[]{
			\label{fig_c1}
			\includegraphics[width=160pt]{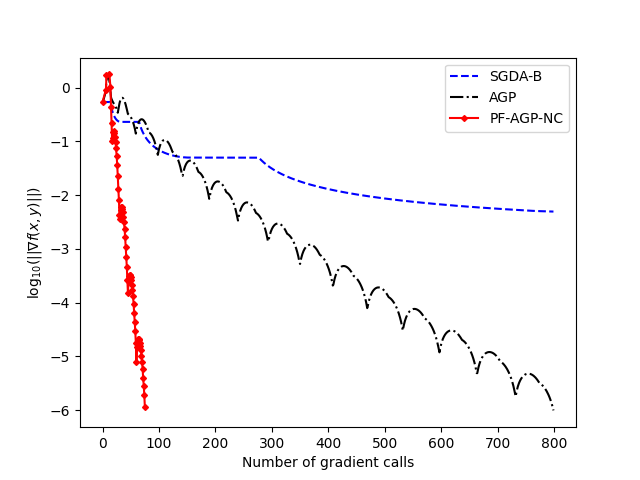}}
		\subfigure[]{
			\label{fig_d2}
			\includegraphics[width=160pt]{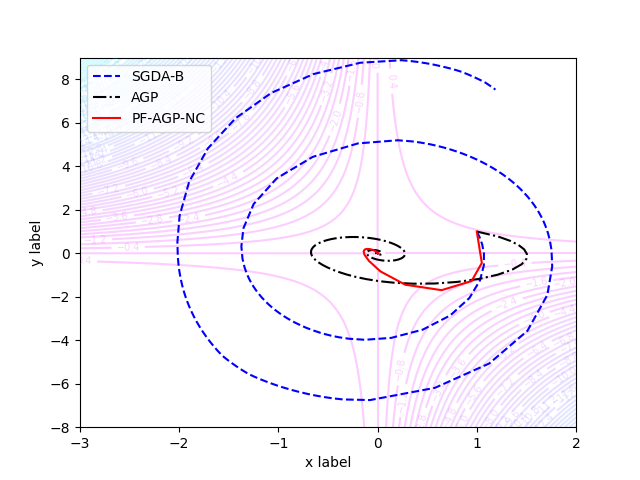}}
		\caption{Numerical performance of three tested algorithms for solving the Dirac-GAN problem}
		\label{fig:example}
	\end{figure}
	\subsection{\textbf{Robust learning over multiple domains}}
	In this subsection,  we consider the following robust learning problem over multiple domains \citep{qian2019robust}, formulated as a nonconvex-linear minimax problem:
	\begin{equation}
		\min _{x} \max _{y \in \Delta} y^{T} F(x),
	\end{equation}
	where $F(x):=\left[f_{1}(x) ; \ldots ; f_{M}(x)\right] \in \mathbb{R}^{M \times 1}$ with $f_{m}(x)=\frac{1}{\left|\mathcal{S}_{m}\right|} \sum_{i=1}^{\left|\mathcal{S}_{m}\right|} \ell\left(d_{i}^{m}, l_{i}^{m}, x\right)$ and $\ell$ can be any non-negative loss function,  $ M $ is the number of tasks and $ x $ denotes the network parameters, $ y \in \Delta$ describes the weights over different tasks and $ \Delta $ is the simplex, i.e., $\Delta=\left\{(y_1,\cdots,y_M)^{T} \mid \sum_{m=1}^{M} y_{m}=1,  y_{m} \geq 0\right\}$. 
	We consider two image classification problems with MNIST \citep{lecun1998gradient} and CIFAR10 datasets \citep{krizhevsky2009learning}. The dimension of  $x$ is 57,044,810. The sample sizes of the CIFAR-10 and MNIST datasets are both 60,000.
	Our goal is to train a neural network that works on these two completely unrelated problems simultaneously. Since cross-entropy loss is popular in multi-class classification problem, we use it as our loss function. We should point out that the quality of the algorithm for solving a robust learning problem over multiple domains is measured by the worst case accuracy over all domains \citep{qian2019robust}.

	
	\textbf{Experiment setup.} We compare the proposed {PF-AGP-NC} algorithm, PF-AGP-NL algorithm with the SGDA-B algorithm and the AGP algorithm. For the {PF-AGP-NC} algorithm,  we set $l_{11}^{1}=0.01$ , $l_{12}^{1}=0.01$ and $l_{22}^{1}=5$. For the PF-AGP-NL algorithm,  we set $l_{11}^{1}=0.1$ and $l_{12}^{1}=0.1$.  For the SGDA-B algorithm, we set $\mu = 20$, $\gamma = 0.9$, $N = 1$, $p=0.5$ and $\tilde{L}$ = 20. For the AGP algorithm, we set $ \frac{1}{\beta_{k}} =\frac{2}{2+\sqrt{k}} $, $\gamma_{k} =100$ and $ c_{k} =\frac{1}{10+k^{\frac{1}{4}}}$. We set the batch size in all tasks to be $ 128 $ and run $ 50 $ epochs for all algorithms. Moreover, we sample our results every epoch and calculate the average accuracy on these two testing sets to evaluate the performance of different algorithms. Due to hardware (especially memory) limitations, we use a mini-batch randomized gradient instead of the exact gradient at each iteration in our numerical experiment, which is the same as in \citep{xu2023unified}.
	\begin{figure}[h]
		\centering
		\subfigure[]{
			\label{fig_c}
			\includegraphics[width=160pt]{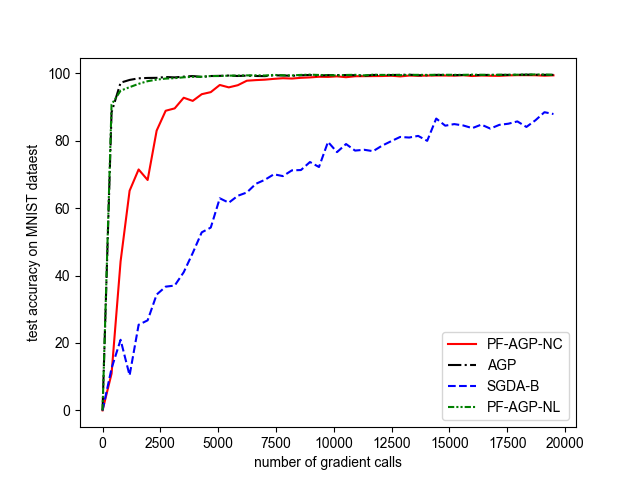}}
		\subfigure[]{
			\label{fig_d}
			\includegraphics[width=160pt]{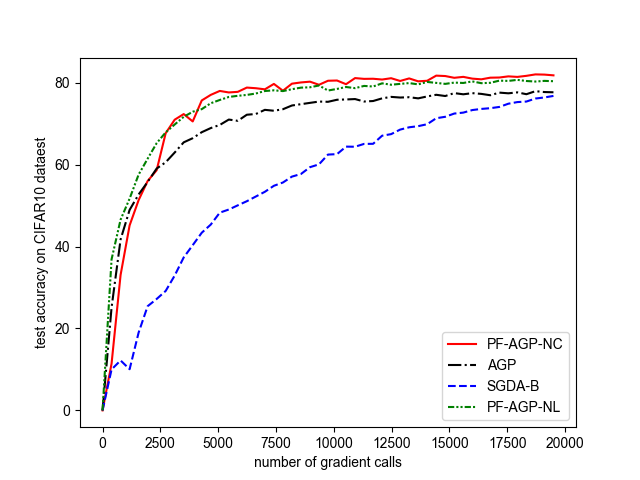}}
		\caption{Numerical Performance of four tested algorithms for solving robust multi-task learning problem.}
		\label{img1}
	\end{figure}
	
	\textbf{Results.}  Figure \ref{fig_c} and Figure \ref{fig_d} show the testing accuracy on the MNIST dataset and  the CIFAR10 dataset respectively. The PF-AGP-NL algorithm and PF-AGP-NC algorithm outperform the SGDA-B algorithm and the AGP algorithm for both datasets.

	\section{Conclusions}
	In this paper, based on the framework of backtracking, we propose two completely parameter-free alternating gradient projection algorithms, i.e., the {PF-AGP-NSC} algorithm and the {PF-AGP-NC} algorithm, for solving nonconvex-(strongly) concave minimax problems respectively, which does not require prior knowledge of parameters such as the Lipschtiz constant $L$ or the strongly concavity modulus  $\mu$.   Moreover, we show that the total number of gradient calls of the {PF-AGP-NSC} algorithm to obtain an $\varepsilon$-stationary point for nonconvex-strongly concave minimax problems is upper bounded by \ {$\mathcal{O}\left( L^2\kappa^3\varepsilon^{-2} \right)$} where $\kappa$ is the condition number,  while the total number of gradient calls of the {PF-AGP-NC} algorithm to obtain an $\varepsilon$-stationary point for nonconvex-concave minimax problems is upper bounded by \ {$\mathcal{O}\left( \log^2(L)L^4\varepsilon^{-4} \right)$}. 
	
	Moreover, we also propose a completely parameter-free alternating gradient projection algorithm, i.e., the {PF-AGP-NL} algorithm, for solving nonconvex-linear minimax problems, which does not require any prior knowledge of parameters.
	As far as we know, the PF-AGP-NSC algorithm, the PF-AGP-NC algorithm and  the PF-AGP-NL algorithm are the first completely parameter-free algorithms for solving nonconvex-strongly concave minimax problems, nonconvex-concave minimax problems and nonconvex-linear minimax problems respectively. Numerical results demonstrate the efficiency of the proposed algorithms.

	\bibliography{sample}

\begin{thebibliography}{47}
\providecommand{\natexlab}[1]{#1}
\providecommand{\url}[1]{\texttt{#1}}
\expandafter\ifx\csname urlstyle\endcsname\relax
  \providecommand{\doi}[1]{doi: #1}\else
  \providecommand{\doi}{doi: \begingroup \urlstyle{rm}\Url}\fi

\bibitem[Arjovsky et~al.(2017)Arjovsky, Chintala, and Bottou]{Arjovsky2017}
M.~Arjovsky, S.~Chintala, and L.~Bottou.
\newblock Wasserstein generative adversarial networks.
\newblock In \emph{Proceedings of the 34th International Conference on Machine
  Learning (ICML)}, pages 214--223, 2017.

\bibitem[Ben-Tal et~al.(2009)Ben-Tal, El~Ghaoui, and Nemirovski]{Ben-Tal2009}
A.~Ben-Tal, L.~El~Ghaoui, and A.~Nemirovski.
\newblock \emph{Robust optimization}.
\newblock Princeton University Press, Princeton, NJ, 2009.

\bibitem[Chen et~al.(2014)Chen, Lan, and Ouyang]{Chen2014}
Y.~Chen, G.~Lan, and Y.~Ouyang.
\newblock Optimal primal-dual methods for a class of saddle point problems.
\newblock \emph{SIAM Journal on Optimization}, 24\penalty0 (4):\penalty0
  1779--1814, 2014.

\bibitem[Chen et~al.(2017)Chen, Lan, and Ouyang]{Chen2017}
Y.~Chen, G.~Lan, and Y.~Ouyang.
\newblock Accelerated schemes for a class of variational inequalities.
\newblock \emph{Mathematical Programming}, 165\penalty0 (1):\penalty0 113--149,
  2017.

\bibitem[Dai et~al.(2018)Dai, Shaw, Li, Xiao, He, Liu, Chen, and Song]{Dai2018}
B.~Dai, A.~Shaw, L.~Li, L.~Xiao, N.~He, Z.~Liu, J.~Chen, and L.~Song.
\newblock Sbeed: Convergent reinforcement learning with nonlinear function
  approximation.
\newblock In \emph{Proceedings of the 35th International Conference on Machine
  Learning (ICML)}, pages 1125--1134, 2018.

\bibitem[Dang and Lan(2014)]{Dang2014}
C.~Dang and G.~Lan.
\newblock Randomized first-order methods for saddle point optimization.
\newblock arXiv preprint arXiv:1409.8625, 2014.

\bibitem[Gao and Kleywegt(2023)]{Gao2023}
R.~Gao and A.~J. Kleywegt.
\newblock Distributionally robust stochastic optimization with wasserstein
  distance.
\newblock \emph{Mathematics of Operations Research}, 48\penalty0 (2):\penalty0
  603--655, 2023.

\bibitem[Giannakis et~al.(2017)Giannakis, Ling, Mateos, Schizas, and
  Zhu]{Giannakis}
G.~B. Giannakis, Q.~Ling, G.~Mateos, I.~D. Schizas, and H.~Zhu.
\newblock Decentralized learning for wireless communications and networking.
\newblock In \emph{Splitting Methods in Communication, Imaging, Science, and
  Engineering}, pages 461--497. Springer, 2017.

\bibitem[Giordano et~al.(2018)Giordano, Broderick, and Jordan]{Giordano}
R.~Giordano, T.~Broderick, and M.~I. Jordan.
\newblock Covariances, robustness, and variational bayes.
\newblock \emph{Journal of Machine Learning Research}, 19\penalty0
  (51):\penalty0 1--49, 2018.

\bibitem[Goodfellow et~al.(2014)Goodfellow, Pouget-Abadie, Mirza, Xu,
  Warde-Farley, Ozair, Courville, and Bengio]{Goodfellow2014}
I.~Goodfellow, J.~Pouget-Abadie, M.~Mirza, B.~Xu, D.~Warde-Farley, S.~Ozair,
  A.~Courville, and Y.~Bengio.
\newblock Generative adversarial nets.
\newblock In \emph{Advances in Neural Information Processing Systems
  (NeurIPS)}, pages 2672--2680, 2014.

\bibitem[Jin et~al.(2020)Jin, Netrapalli, and Jordan]{Jin}
C.~Jin, P.~Netrapalli, and M.~I. Jordan.
\newblock What is local optimality in nonconvex-nonconcave minimax
  optimization?
\newblock In \emph{Proceedings of the 37th International Conference on Machine
  Learning (ICML)}, pages 4880--4889, 2020.

\bibitem[Kong and Monteiro(2021)]{Kong}
W.~Kong and R.~D.~C. Monteiro.
\newblock An accelerated inexact proximal point method for solving
  nonconvex-concave min-max problems.
\newblock \emph{SIAM Journal on Optimization}, 31\penalty0 (4):\penalty0
  2558--2585, 2021.

\bibitem[Krizhevsky and Hinton(2009)]{krizhevsky2009learning}
A.~Krizhevsky and G.~Hinton.
\newblock Learning multiple layers of features from tiny images.
\newblock Technical report, University of Toronto, Toronto, ON, Canada, 2009.

\bibitem[Lan et~al.(2023)Lan, Ouyang, and Zhang]{lan2023optimal}
G.~Lan, Y.~Ouyang, and Z.~Zhang.
\newblock Optimal and parameter-free gradient minimization methods for smooth
  optimization.
\newblock arXiv preprint arXiv:2310.12139, 2023.

\bibitem[LeCun et~al.(1998)LeCun, Bottou, Bengio, and
  Haffner]{lecun1998gradient}
Y.~LeCun, L.~Bottou, Y.~Bengio, and P.~Haffner.
\newblock Gradient-based learning applied to document recognition.
\newblock \emph{Proceedings of the IEEE}, 86\penalty0 (11):\penalty0
  2278--2324, 1998.

\bibitem[Li et~al.(2022)Li, Yang, and He]{Li2022}
X.~Li, J.~Yang, and N.~He.
\newblock Tiada: A time-scale adaptive algorithm for nonconvex minimax
  optimization.
\newblock arXiv preprint arXiv:2210.17478, 2022.

\bibitem[Lin et~al.(2020{\natexlab{a}})Lin, Jin, and Jordan]{Lin2019}
T.~Lin, C.~Jin, and M.~I. Jordan.
\newblock On gradient descent ascent for nonconvex-concave minimax problems.
\newblock In \emph{Proceedings of the 37th International Conference on Machine
  Learning (ICML)}, pages 6083--6093, 2020{\natexlab{a}}.

\bibitem[Lin et~al.(2020{\natexlab{b}})Lin, Jin, and Jordan]{Lin2020}
T.~Lin, C.~Jin, and M.~I. Jordan.
\newblock Near-optimal algorithms for minimax optimization.
\newblock In \emph{Proceedings of the 33rd Conference on Learning Theory
  (COLT)}, pages 2738--2779, 2020{\natexlab{b}}.

\bibitem[Liu and Luo(2022)]{Liu2022}
C.~Liu and L.~Luo.
\newblock Regularized newton methods for monotone variational inequalities with
  hölder continuous jacobians.
\newblock arXiv preprint arXiv:2212.07824, 2022.

\bibitem[Lu et~al.(2020)Lu, Tsaknakis, Hong, and Chen]{Lu}
S.~Lu, I.~Tsaknakis, M.~Hong, and Y.~Chen.
\newblock Hybrid block successive approximation for one-sided non-convex
  min-max problems: algorithms and applications.
\newblock \emph{IEEE Transactions on Signal Processing}, 68:\penalty0
  3676--3691, 2020.

\bibitem[Luo et~al.(2022)Luo, Li, and Chen]{luo2022finding}
L.~Luo, Y.~Li, and C.~Chen.
\newblock Finding second-order stationary points in nonconvex-strongly-concave
  minimax optimization.
\newblock In \emph{Advances in Neural Information Processing Systems
  (NeurIPS)}, pages 36667--36679, 2022.

\bibitem[Mateos et~al.(2010)Mateos, Bazerque, and Giannakis]{Mateos}
G.~Mateos, J.~A. Bazerque, and G.~B. Giannakis.
\newblock Distributed sparse linear regression.
\newblock \emph{IEEE Transactions on Signal Processing}, 58\penalty0
  (10):\penalty0 5262--5276, 2010.

\bibitem[Mescheder et~al.(2018)Mescheder, Geiger, and Nowozin]{Mescheder2018}
L.~Mescheder, A.~Geiger, and S.~Nowozin.
\newblock Which training methods for gans do actually converge?
\newblock In \emph{Proceedings of the 35th International Conference on Machine
  Learning (ICML)}, pages 3481--3490, 2018.

\bibitem[Monteiro and Svaiter(2010)]{Monteiro2010}
R.~D.~C. Monteiro and B.~F. Svaiter.
\newblock On the complexity of the hybrid proximal extragradient method for the
  iterates and the ergodic mean.
\newblock \emph{SIAM Journal on Optimization}, 20\penalty0 (6):\penalty0
  2755--2787, 2010.

\bibitem[Nedić and Ozdaglar(2009)]{Nedic2009}
A.~Nedić and A.~Ozdaglar.
\newblock Subgradient methods for saddle-point problems.
\newblock \emph{Journal of Optimization Theory and Applications}, 142\penalty0
  (1):\penalty0 205--228, 2009.

\bibitem[Nemirovski(2004)]{Nemirovski2004}
A.~Nemirovski.
\newblock Prox-method with rate of convergence o(1/t) for variational
  inequalities with lipschitz continuous monotone operators and smooth
  convex-concave saddle point problems.
\newblock \emph{SIAM Journal on Optimization}, 15\penalty0 (1):\penalty0
  229--251, 2004.

\bibitem[Nesterov(2007)]{Nesterov2007}
Y.~Nesterov.
\newblock Dual extrapolation and its applications to solving variational
  inequalities and related problems.
\newblock \emph{Mathematical Programming}, 109\penalty0 (2):\penalty0 319--344,
  2007.

\bibitem[Nesterov(2015)]{Nesterov2015}
Y.~Nesterov.
\newblock Universal gradient methods for convex optimization problems.
\newblock \emph{Mathematical Programming}, 152\penalty0 (1):\penalty0 381--404,
  2015.

\bibitem[Nesterov and Polyak(2006)]{Nesterov2006}
Y.~Nesterov and B.~T. Polyak.
\newblock Cubic regularization of newton method and its global performance.
\newblock \emph{Mathematical Programming}, 108\penalty0 (1):\penalty0 177--205,
  2006.

\bibitem[Nouiehed et~al.(2019{\natexlab{a}})Nouiehed, Sanjabi, Huang, Lee, and
  Razaviyayn]{Nouiehed}
M.~Nouiehed, M.~Sanjabi, T.~Huang, J.~D. Lee, and M.~Razaviyayn.
\newblock Solving a class of non-convex min-max games using iterative first
  order methods.
\newblock In \emph{Advances in Neural Information Processing Systems
  (NeurIPS)}, pages 14934--14942, 2019{\natexlab{a}}.

\bibitem[Nouiehed et~al.(2019{\natexlab{b}})Nouiehed, Sanjabi, Huang, Lee, and
  Razaviyayn]{Nouiehed2019}
M.~Nouiehed, M.~Sanjabi, T.~Huang, J.~D. Lee, and M.~Razaviyayn.
\newblock Solving a class of non-convex min-max games using iterative
  first-order methods.
\newblock In \emph{Advances in Neural Information Processing Systems
  (NeurIPS)}, pages 14934--14942, 2019{\natexlab{b}}.

\bibitem[Ostrovskii et~al.(2021)Ostrovskii, Lowy, and Razaviyayn]{Ostro}
D.~M. Ostrovskii, A.~Lowy, and M.~Razaviyayn.
\newblock Efficient search of first-order nash equilibria in nonconvex-concave
  smooth min-max problems.
\newblock \emph{SIAM Journal on Optimization}, 31\penalty0 (4):\penalty0
  2508--2538, 2021.

\bibitem[Pan et~al.(2021)Pan, Shen, and Xu]{Pan2021}
W.~Pan, J.~Shen, and Z.~Xu.
\newblock An efficient algorithm for nonconvex-linear minimax optimization
  problem and its application in solving weighted maximin dispersion problem.
\newblock \emph{Computational Optimization and Applications}, 78\penalty0
  (1):\penalty0 287--306, 2021.

\bibitem[Qian et~al.(2019)Qian, Zhu, Tang, Jin, Sun, and Li]{qian2019robust}
Q.~Qian, S.~Zhu, J.~Tang, R.~Jin, B.~Sun, and H.~Li.
\newblock Robust optimization over multiple domains.
\newblock In \emph{Proceedings of the AAAI Conference on Artificial
  Intelligence}, volume~33, pages 4739--4746, 2019.

\bibitem[Rafique et~al.(2022)Rafique, Liu, Lin, and Yang]{Rafique}
H.~Rafique, M.~Liu, Q.~Lin, and T.~Yang.
\newblock Weakly-convex--concave min-max optimization: provable algorithms and
  applications in machine learning.
\newblock \emph{Optimization Methods and Software}, 37\penalty0 (3):\penalty0
  1087--1121, 2022.

\bibitem[Sanjabi et~al.(2018)Sanjabi, Ba, Razaviyayn, and Lee]{Maziar}
M.~Sanjabi, J.~Ba, M.~Razaviyayn, and J.~D. Lee.
\newblock On the convergence and robustness of training gans with regularized
  optimal transport.
\newblock In \emph{Advances in Neural Information Processing Systems
  (NeurIPS)}, pages 7091--7101, 2018.

\bibitem[Shafieezadeh-Abadeh et~al.(2015)Shafieezadeh-Abadeh,
  Mohajerin~Esfahani, and Kuhn]{ShafieezadehAbadeh2015}
S.~Shafieezadeh-Abadeh, P.~Mohajerin~Esfahani, and D.~Kuhn.
\newblock Distributionally robust logistic regression.
\newblock In \emph{Advances in Neural Information Processing Systems
  (NeurIPS)}, pages 1576--1584, 2015.

\bibitem[Thekumparampil et~al.(2019)Thekumparampil, Jain, Netrapalli, and
  Oh]{Thekumparampil2019}
K.~K. Thekumparampil, P.~Jain, P.~Netrapalli, and S.~Oh.
\newblock Efficient algorithms for smooth minimax optimization.
\newblock In \emph{Advances in Neural Information Processing Systems
  (NeurIPS)}, pages 12659--12670, 2019.

\bibitem[Vandenberghe and Boyd(2004)]{Vandenberghe}
L.~Vandenberghe and S.~Boyd.
\newblock \emph{Convex optimization}.
\newblock Cambridge University Press, Cambridge, UK, 2004.

\bibitem[Xu et~al.(2024)Xu, Zhang, Aybat, and Gürbüzbalaban]{Xu2024}
Q.~Xu, X.~Zhang, N.~S. Aybat, and M.~Gürbüzbalaban.
\newblock A stochastic gda method with backtracking for solving nonconvex
  (strongly) concave minimax problems.
\newblock arXiv preprint arXiv:2403.07806, 2024.

\bibitem[Xu et~al.(2023)Xu, Zhang, Xu, and Lan]{xu2023unified}
Z.~Xu, H.~Zhang, Y.~Xu, and G.~Lan.
\newblock A unified single-loop alternating gradient projection algorithm for
  nonconvex–concave and convex–nonconcave minimax problems.
\newblock \emph{Mathematical Programming}, 201\penalty0 (1):\penalty0 635--706,
  2023.

\bibitem[Yang et~al.(2020)Yang, Kiyavash, and He]{Yang2020}
J.~Yang, N.~Kiyavash, and N.~He.
\newblock Global convergence and variance-reduced optimization for a class of
  nonconvex-nonconcave minimax problems.
\newblock arXiv preprint arXiv:2002.09621, 2020.

\bibitem[Yang et~al.(2022)Yang, Li, and He]{Yang2022}
J.~Yang, X.~Li, and N.~He.
\newblock Neada: Nest your adaptive algorithm for parameter-agnostic nonconvex
  minimax optimization.
\newblock In \emph{Advances in Neural Information Processing Systems
  (NeurIPS)}, pages 11202--11216, 2022.

\bibitem[Zhang et~al.(2022)Zhang, Wang, Xu, and Dai]{zhang2022primal}
H.~Zhang, J.~Wang, Z.~Xu, and Y.-H. Dai.
\newblock Primal dual alternating proximal gradient algorithms for nonsmooth
  nonconvex minimax problems with coupled linear constraints.
\newblock \emph{arXiv preprint arXiv:2212.04672}, 2022.

\bibitem[Zhang et~al.(2020)Zhang, Xiao, Sun, and Luo]{Zhang2020}
J.~Zhang, P.~Xiao, R.~Sun, and Z.~Luo.
\newblock A single-loop smoothed gradient descent-ascent algorithm for
  nonconvex-concave min-max problems.
\newblock In \emph{Advances in Neural Information Processing Systems
  (NeurIPS)}, pages 7377--7389, 2020.

\bibitem[Zhang et~al.(2021)Zhang, Yang, Guzmán, Kiyavash, and He]{Zhang2021}
S.~Zhang, J.~Yang, C.~Guzmán, N.~Kiyavash, and N.~He.
\newblock The complexity of nonconvex-strongly-concave minimax optimization.
\newblock In \emph{Proceedings of the 37th Conference on Uncertainty in
  Artificial Intelligence (UAI)}, pages 482--492, 2021.

\bibitem[Zhang et~al.(2025)Zhang, Xu, and Aybat]{Zhang2024}
X.~Zhang, Q.~Xu, and N.~S. Aybat.
\newblock Agda+: Proximal alternating gradient descent ascent method with a
  nonmonotone adaptive step-size search for nonconvex minimax problems.
\newblock arXiv preprint arXiv:2406.14371v2, 2025.

\end{thebibliography}
	\appendix
	
	\section{A restarted parameter-free single-loop  algorithm for nonconvex-strongly concave minimax problems}  \label{app1}
	In this section, we propose a restarting parameter-free alternating gradient projection algorithm (rPF-AGP-NSC) for the nonconvex-strongly concave minimax problem. The algorithm is a restarting variant of PF-AGP-NSC that does not require the prior knowledge of the Lipschtiz constant $L$, but requires the prior knowledge of two additional parameters $\mu$ and $\underline{S}$. The algorithm  is formally presented in Algorithm \ref{restart_al_nc_sc4}. 
	
	Next, we prove the iteration complexity of Algorithm \ref{restart_al_nc_sc4} under the nonconvex-strongly concave setting. Denote   
	$\Phi(x):=\max_{y\in\mathcal{Y}}f(x,y)$ and 
	$y^*(x):=\arg\max_{y\in\mathcal{Y}}f(x,y)$.
	By Lemma B.1 in \citep{Nouiehed}, $\Phi(x)$ is $L_\Phi$-Lipschitz smooth with $L_\Phi=L+\frac{L^2}{\mu}$. Moreover,  we have
	\begin{align}
		\nabla_x\Phi(x)=\nabla_x f (x, y^*(x)) \label{gradla:nsc}.
	\end{align}
	\begin{algorithm}[t]
		\caption{A restarted parameter-free alternating gradient projection (rPF-AGP-NSC) algorithm for nonconvex-strongly concave minimax problems}
		\begin{algorithmic}
			\STATE \textbf{Step 1:} Input  $x_{1,1}, y_{1,1}$, $l_{1}$,  $\mu$, $\varepsilon$, $\underline{S}$; Set $i=1$. Compute: $S_{1}=2\max_{y \in \mathcal{Y}}f(x_{1,1},y)-f(x_{1,1}, y_{1,1})$. 
			\STATE \textbf{Step 2:} Comppute $\beta_i= 3l_i$, $\alpha_i=\frac{158l_i^3}{\mu^2}$, $\eta_i  =\frac{(2\beta_i+\mu)(\beta_i+l_i)}{\mu\beta_i}$, 
			$\tilde{d}^i_1 =\min \{\frac{\alpha_i-\frac{l_i(l_i+\beta_i)^2\eta_i^2}{\beta_i^2}					-\frac{l_i^2}{\mu}-\frac{3l_i}{2}}{2\alpha_i^2},\frac{ \beta_i-\frac{3l_i}{2}}{\beta_i^2+2l_i^{2}} \}$, $ \tilde{T}_i = \frac{S_{1}-\underline{S}}{\varepsilon^2 \tilde{d}^i_1}$;  Set $k =1$; 
			\STATE \textbf{Step 3:} {\bf Update $x_{i,k+1}$ and $y_{i,k+1}$}:
			\STATE
			\quad  \textbf{(a):}   \textbf{If} $k \geq \tilde{T}_i$ \textbf{then}, go to \textbf{Step 4};
			\STATE	\quad  \textbf{(b):}  Update $x_{i,{k+1}}$ and $y_{i,{k+1}}$
			\begin{align}
				y_{i,{k+1}} &=  \mathcal{P}_\mathcal{Y}\left(y_{i,k} + \frac{1}{\beta_i} \nabla_y f(x_{i,{k}}, y_{i,k})\right), \label{update-ys}\\
				x_{i,{k+1}} &=  \mathcal{P}_\mathcal{X}\left(x_{i,k} - \frac{1}{\alpha_i} \nabla_x f(x_{i,k}, y_{i,k+1}) \right)\label{update-xs}.
			\end{align}
			\STATE	\quad  \textbf{(c):}   Set $k = k+1$ go to \textbf{Step 3}.
			\STATE \textbf{Step 4:} \textbf{If} $\|\nabla G(x_{i,k}, y_{i,k})\| \leq \varepsilon$, \textbf{stop}; \textbf{Otherwise} $x_{i+1,1}=x_{1,1}$, $y_{i+1,1}=y_{1,1}$, $l_{i+1} = 2l_i$, $i=i+1$, go to \textbf{Step 2}. \\ 	
		\end{algorithmic}
		\label{restart_al_nc_sc4}
	\end{algorithm}
	\begin{lemma}\label{lem1}
		Suppose that Assumption \ref{as} holds. Let $\{\left(x_{i,k},y_{i,k}\right)\}$ be a sequence generated by Algorithm \ref{restart_al_nc_sc4}. Let $\eta_i=\frac{(2\beta_i+\mu)(\beta_i+L)}{\mu\beta_i}$, then $\forall k,i \ge 1$, and for any $a>0$,
		\begin{align}\label{lem1:iq1}
			&\Phi(x_{i,k+1}) -\Phi(x_{i,k})\nonumber \\
			\leq& \langle  \nabla _x f( x_{i,k},y_{i,k+1}),x_{i,k+1}-x_{i,k} \rangle+\frac{L^2(L+\beta_i)^2\eta_i^2}{2a\beta_i^2}\|y_{i,k+1}-y_{i,k}\|^2 +\frac{a+L_\Phi}{2}\|x_{i,k+1}-x_{i,k}\|^2.
		\end{align}
	\end{lemma}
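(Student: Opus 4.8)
The plan is to start from the $L_\Phi$-smoothness of the envelope $\Phi$, which is already in hand (by the result quoted just above, $\Phi$ is $L_\Phi$-smooth with $L_\Phi = L + L^2/\mu$), and then trade the true gradient $\nabla\Phi(x_{i,k})$ for the gradient the algorithm actually uses when updating $x$, namely $\nabla_x f(x_{i,k}, y_{i,k+1})$. First I would write the descent inequality
\begin{equation*}
\Phi(x_{i,k+1}) - \Phi(x_{i,k}) \le \langle \nabla\Phi(x_{i,k}),\, x_{i,k+1} - x_{i,k}\rangle + \tfrac{L_\Phi}{2}\|x_{i,k+1} - x_{i,k}\|^2,
\end{equation*}
and then invoke \eqref{gradla:nsc}, $\nabla\Phi(x_{i,k}) = \nabla_x f(x_{i,k}, y^*(x_{i,k}))$, to split the inner product as $\langle \nabla_x f(x_{i,k}, y_{i,k+1}),\, x_{i,k+1} - x_{i,k}\rangle + \langle \nabla_x f(x_{i,k}, y^*(x_{i,k})) - \nabla_x f(x_{i,k}, y_{i,k+1}),\, x_{i,k+1} - x_{i,k}\rangle$. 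The first piece is already the leading term in the claimed inequality.

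Next I would dispatch the cross term by Young's inequality with the free parameter $a>0$, bounding it by $\tfrac{1}{2a}\|\nabla_x f(x_{i,k}, y^*(x_{i,k})) - \nabla_x f(x_{i,k}, y_{i,k+1})\|^2 + \tfrac{a}{2}\|x_{i,k+1} - x_{i,k}\|^2$; the $\tfrac a2$ term combines with $\tfrac{L_\Phi}{2}$ to give the $\tfrac{a+L_\Phi}{2}\|x_{i,k+1}-x_{i,k}\|^2$ term of the statement. It then remains to control $\|\nabla_x f(x_{i,k}, y^*(x_{i,k})) - \nabla_x f(x_{i,k}, y_{i,k+1})\|$, which by Assumption \ref{as} (the cross constant $L_{12}\le L$ bounds the Lipschitz modulus of $\nabla_x f(x,\cdot)$ in $y$ as well, by symmetry of the mixed second derivatives) is at most $L\,\|y^*(x_{i,k}) - y_{i,k+1}\|$.

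The crux — and where I expect the actual work to be — is the error bound $\|y^*(x_{i,k}) - y_{i,k+1}\| \le \tfrac{(L+\beta_i)\eta_i}{\beta_i}\,\|y_{i,k+1} - y_{i,k}\|$, which is exactly what turns the squared gradient difference into $\tfrac{L^2(L+\beta_i)^2\eta_i^2}{2a\beta_i^2}\|y_{i,k+1}-y_{i,k}\|^2$. I would get this in two steps. First, since $y^*(x_{i,k})$ is a fixed point of the projected-gradient-ascent map $y\mapsto\mathcal{P}_\mathcal{Y}(y+\tfrac1{\beta_i}\nabla_y f(x_{i,k},y))$ while $y_{i,k+1}$ is one application of that map at $y_{i,k}$ (cf. \eqref{update-ys}), non-expansiveness of $\mathcal{P}_\mathcal{Y}$ together with the $L$-Lipschitzness of $\nabla_y f(x_{i,k},\cdot)$ yields $\|y^*(x_{i,k}) - y_{i,k+1}\| \le \tfrac{\beta_i+L}{\beta_i}\|y^*(x_{i,k}) - y_{i,k}\|$. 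Second, I would bound $\|y^*(x_{i,k}) - y_{i,k}\|$ by $\eta_i\|y_{i,k+1}-y_{i,k}\|$: testing the variational inequality for the $y$-update \eqref{update-ys} at $u=y^*(x_{i,k})$ and combining it with $\mu$-strong concavity of $f(x_{i,k},\cdot)$ and the $y$-smoothness gives $\|y^*(x_{i,k}) - y_{i,k+1}\|\le\tfrac{L+\beta_i}{\mu}\|y_{i,k+1}-y_{i,k}\|$, and then a triangle-inequality step produces a bound on $\|y^*(x_{i,k})-y_{i,k}\|$ whose constant is dominated by $\eta_i=\tfrac{(2\beta_i+\mu)(\beta_i+L)}{\mu\beta_i}$ — this is where the factor $(2\beta_i+\mu)/\mu$ enters. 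Substituting this chain of estimates back collects all the $\|y_{i,k+1}-y_{i,k}\|^2$ contributions into the stated coefficient and finishes the proof. One point worth noting is that no part of this argument requires $\beta_i\ge L$: the error bound rests only on strong concavity and the optimality of the $y$-step, so the inequality holds for every $k,i\ge 1$ as claimed.
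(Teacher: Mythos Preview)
Your proof is correct and the first half---$L_\Phi$-smoothness, the add-and-subtract of $\nabla_x f(x_{i,k},y_{i,k+1})$, Young's inequality with parameter $a$, and the reduction to bounding $\|y^*(x_{i,k})-y_{i,k+1}\|$---is exactly what the paper does. The only genuine difference is in how the error bound on $\|y^*(x_{i,k})-y_{i,k+1}\|$ is obtained. The paper does not argue from scratch: it invokes a black-box error-bound result (Lemma~1 of \citep{zhang2022primal}) giving
\[
\|y_{i,k+1}-y^*(x_{i,k})\|\le \eta_i\left\|y_{i,k+1}-\mathcal{P}_{\mathcal{Y}}\!\Big(y_{i,k+1}+\tfrac{1}{\beta_i}\nabla_y f(x_{i,k},y_{i,k+1})\Big)\right\|,
\]
and then bounds the residual on the right by $\tfrac{L+\beta_i}{\beta_i}\|y_{i,k+1}-y_{i,k}\|$ using the update rule \eqref{update-ys} and non-expansiveness. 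You instead derive an error bound yourself from the optimality conditions and strong concavity; that is perfectly valid and arrives at the same constant $\tfrac{(L+\beta_i)\eta_i}{\beta_i}$. In fact your intermediate estimate $\|y^*(x_{i,k})-y_{i,k+1}\|\le\tfrac{L+\beta_i}{\mu}\|y_{i,k+1}-y_{i,k}\|$ is already strong enough on its own (since $\tfrac{L+\beta_i}{\mu}\le\tfrac{(L+\beta_i)\eta_i}{\beta_i}$), so the detour through $\|y^*(x_{i,k})-y_{i,k}\|$ and the fixed-point expansion is unnecessary, though harmless. The upside of your route is that it is self-contained; the paper's route is shorter because it outsources the work to a cited lemma.
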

	\begin{proof}
		Since $\Phi(x)$ is $L_\Phi$ smooth with respect to $x$, and by \eqref{gradla:nsc}, we have
		\begin{align} \label{lem1:2}
			\Phi(x_{i,k+1})-\Phi(x_{i,k})
			\le&\langle \nabla _xf(x_{i,k},y^*(x_{i,k}))-\nabla _xf(x_{i,k},y_{i,k+1}) ,x_{i,k+1}-x_{i,k} \rangle  \nonumber\\
			&+\frac{L_\Phi}{2}\|x_{i,k+1}-x_{i,k}\|^2+\langle \nabla _xf(x_{i,k},y_{i,k+1}) ,x_{i,k+1}-x_{i,k} \rangle.
		\end{align}
		By the Cauchy-Schwarz inequality, for any $a>0$, we obtain
		\begin{align}
			&\langle \nabla _xf(x_{i,k},y^*(x_{i,k}))-\nabla _xf(x_{i,k},y_{i,k+1}) ,x_{i,k+1}-x_{i,k} \rangle\nonumber\\
			\le&\frac{L^2}{2a}\|y_{i,k+1}-y^*(x_{i,k})\|^2+\frac{a}{2}\|x_{i,k+1}-x_{i,k}\|^2.\label{lem1:3}
		\end{align}
		By Lemma 1 in \citep{zhang2022primal}, we have
		\begin{align}\label{lem1:4}
			&\|y_{i,k+1}-y^*(x_{i,k})\|
			\le \eta_i \left\| y_{i,k+1}-\mathcal{P}_{\mathcal{Y}}\left( y_{i,k+1}+ \frac{1}{\beta_i} \nabla _yf\left( x_{i,k},y_{i,k+1} \right) \right)\right\|.
		\end{align}
		By \eqref{update-ys}, the non-expansive property of the projection operator $\mathcal{P}_{\mathcal{Y}}(\cdot)$ and Assumption \ref{as}, we obtain
		\begin{align}\label{lem1:5}
			&\left\| y_{i,k+1}-\mathcal{P}_{\mathcal{Y}}\left( y_{i,k+1}+ \frac{1}{\beta_i} \nabla _yf\left( x_{i,k},y_{i,k+1} \right) \right)\right\|
			\le\frac{L+\beta_i}{\beta_i}\|y_{i,k+1}-y_{i,k}\|.
		\end{align}
		Combing \eqref{lem1:2}, \eqref{lem1:3}, \eqref{lem1:4} and \eqref{lem1:5}, we get
		\begin{align}\label{lem1:6}
			\Phi(x_{i,k+1})-\Phi(x_{i,k})
			\le&\frac{L^2\eta_i^2(L+\beta_i)^2}{2a\beta_i^2}\|y_{i,k+1}-y_{i,k}\|^2+\frac{a+L_\Phi}{2}\|x_{i,k+1}-x_{i,k}\|^2\nonumber\\
			&+\langle \nabla _x f(x_{i,k},y_{i,k+1}) ,x_{i,k+1}-x_{i,k} \rangle,
		\end{align}
		which completes the proof.
	\end{proof}
	Next, we provide an lower bound for the difference between $f(x_{i,k+1},y_{i,k+1})$ and $f(x_{i,k},y_{i,k})$.
	\begin{lem}\label{lem2}
		Suppose that Assumption \ref{as} holds. Let $\{\left(x_{i,k},y_{i,k}\right)\}$ be a sequence generated by Algorithm \ref{restart_al_nc_sc4}.
		Then $\forall i,k \ge 1$,
		\begin{align}\label{lem2:iq1}
			&f(x_{i,k+1},y_{i,k+1})-f( x_{i,k},y_{i,k})\nonumber \\
			\geq&\left(\beta_i-\frac{L}{2}\right)\|y_{i,k+1}-y_{i,k}\|^2-\frac{L}{2}\| x_{i,k+1}-x_{i,k} \|^2
			+\langle \nabla _{x}f(x_{i,k},y_{i,k+1}),x_{i,k+1}-x_{i,k} \rangle,
		\end{align}.
	\end{lem}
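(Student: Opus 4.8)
The plan is to split the one-step change $f(x_{i,k+1},y_{i,k+1})-f(x_{i,k},y_{i,k})$ across the two updates in the order they are performed — the $y$-step \eqref{update-ys} first, then the $x$-step \eqref{update-xs} — and to lower-bound each piece using the smoothness of $f$ (Assumption \ref{as}) together with the optimality conditions of the two projected gradient steps. Concretely, I would write
\begin{align*}
f(x_{i,k+1},y_{i,k+1})-f(x_{i,k},y_{i,k})
&= \bigl[f(x_{i,k},y_{i,k+1})-f(x_{i,k},y_{i,k})\bigr] \\
&\quad + \bigl[f(x_{i,k+1},y_{i,k+1})-f(x_{i,k},y_{i,k+1})\bigr].
\end{align*}

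For the second bracket, I would apply the ``descent lemma'' lower bound furnished by $L_{11}$-Lipschitz continuity of $\nabla_x f(\cdot,y_{i,k+1})$, namely
\[
f(x_{i,k+1},y_{i,k+1})-f(x_{i,k},y_{i,k+1}) \geq \langle \nabla_x f(x_{i,k},y_{i,k+1}),\, x_{i,k+1}-x_{i,k}\rangle - \tfrac{L_{11}}{2}\|x_{i,k+1}-x_{i,k}\|^2,
\]
and then bound $L_{11}\le L$. Note the $x$-update \eqref{update-xs} is used only implicitly here (it produces $x_{i,k+1}$); no property of the step size $\alpha_i$ is needed for this lemma.

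For the first bracket, I would combine two facts about the $y$-update. By $L_{22}$-smoothness of $f(x_{i,k},\cdot)$,
\[
f(x_{i,k},y_{i,k+1})-f(x_{i,k},y_{i,k}) \geq \langle \nabla_y f(x_{i,k},y_{i,k}),\, y_{i,k+1}-y_{i,k}\rangle - \tfrac{L_{22}}{2}\|y_{i,k+1}-y_{i,k}\|^2,
\]
while the optimality condition of the projection in \eqref{update-ys}, tested at $y=y_{i,k}\in\mathcal{Y}$, reads
\[
\langle y_{i,k}+\tfrac1{\beta_i}\nabla_y f(x_{i,k},y_{i,k})-y_{i,k+1},\, y_{i,k}-y_{i,k+1}\rangle \leq 0,
\]
which rearranges to $\langle \nabla_y f(x_{i,k},y_{i,k}),\, y_{i,k+1}-y_{i,k}\rangle \geq \beta_i\|y_{i,k+1}-y_{i,k}\|^2$. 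Substituting this into the smoothness estimate and using $L_{22}\le L$ gives $f(x_{i,k},y_{i,k+1})-f(x_{i,k},y_{i,k}) \geq \bigl(\beta_i-\tfrac{L}{2}\bigr)\|y_{i,k+1}-y_{i,k}\|^2$. Adding the two bracket bounds yields exactly \eqref{lem2:iq1}.

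I do not anticipate a genuine obstacle: this is a routine sufficient-ascent/descent argument. The only points requiring care are (i) using the ``$-\tfrac{L}{2}\|\cdot\|^2$'' side of the smoothness two-sided estimate, since a \emph{lower} bound is wanted; (ii) the sign bookkeeping in the projection inequality (in particular that $\langle y_{i,k+1}-y_{i,k},\,y_{i,k}-y_{i,k+1}\rangle=-\|y_{i,k+1}-y_{i,k}\|^2$); and (iii) replacing the individual constants $L_{11},L_{22}$ by their common upper bound $L=\max\{L_{11},L_{12},L_{22}\}$. Unlike the PF-AGP-NC and PF-AGP-NL algorithms, rPF-AGP-NSC uses no regularizer, so one works directly with $f$, and since $\beta_i$ is the prescribed step-size parameter rather than a backtracked estimate, none of the conditions \eqref{C1}--\eqref{C4} enter the proof. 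This bound will subsequently be paired with the upper bound on $\Phi(x_{i,k+1})-\Phi(x_{i,k})$ from Lemma \ref{lem1} to produce the potential-function recursion driving the complexity analysis.
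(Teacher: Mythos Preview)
Your proposal is correct and follows essentially the same approach as the paper: split the change as $[f(x_{i,k},y_{i,k+1})-f(x_{i,k},y_{i,k})]+[f(x_{i,k+1},y_{i,k+1})-f(x_{i,k},y_{i,k+1})]$, lower-bound the first bracket using $L$-smoothness in $y$ together with the projection optimality condition of \eqref{update-ys} (giving $\langle\nabla_y f(x_{i,k},y_{i,k}),y_{i,k+1}-y_{i,k}\rangle\ge\beta_i\|y_{i,k+1}-y_{i,k}\|^2$), and lower-bound the second bracket via $L$-smoothness in $x$. The paper's proof is identical in structure and in the inequalities used.
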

	\begin{proof}
		The optimality condition for $y_{i,k}$ in \eqref{update-ys} implies that $\forall y\in \mathcal{Y}$ and $\forall i,k\geq 1$,
		\begin{equation}\label{lem2:3}
			\langle \nabla _y f(x_{i,k},y_{i,k}),y_{i,k+1}-y_{i,k} \rangle \ge \beta_i\|y_{i,k+1}-y_{i,k}\|^2.
		\end{equation}
		By  Assumption \ref{as}, the gradient of $f(x,y)$ is Lipschitz continuous with respect to $y$ and  \eqref{lem2:3}, we get
		\begin{align}\label{lem2:4}
			f(x_{i,k},y_{i,k+1})-f( x_{i,k},y_{i,k})
			\ge&\left(\beta_i-\frac{L}{2}\right)\|y_{i,k+1}-y_{i,k}\|^2.
		\end{align}
		Since that the gradient of $f(x,y)$ is Lipschitz continuous with respect to $x$, we obtain
		\begin{align}\label{lem2:5}
			& f(x_{i,k+1},y_{i,k+1})-f( x_{i,k},y_{i,k+1})\nonumber\\
			\ge & \langle \nabla _{x}f(x_{i,k},y_{i,k+1}),x_{i,k+1}-x_{i,k} \rangle -\frac{L}{2}\| x_{i,k+1}-x_{i,k} \|^2.
		\end{align}
		The proof is then completed by adding \eqref{lem2:4} and \eqref{lem2:5}.
	\end{proof}
	We now establish an important recursion for the Algorithm \ref{restart_al_nc_sc4}.
	\begin{lem}\label{lem3}
		Suppose that Assumption \ref{as} holds. Denote
		$
		S(x,y)=2\Phi(x)-f(x,y).
		$
		Let $\{\left(x_{i,k},y_{i,k}\right)\}$ be a sequence generated by Algorithm \ref{restart_al_nc_sc4}. Then $\forall i,k \geq 1$,
		\begin{align}\label{lem3:1}
			S(x_{i,k+1},y_{i,k+1})-S( x_{i,k},y_{i,k})
			\le&-\left(\alpha_i-\frac{L(L+\beta_i)^2\eta_i^2}{\beta_i^2}
			-\frac{L^2}{\mu}-\frac{3L}{2}\right)\|x_{i,k+1}-x_{i,k}\|^2 \nonumber \\
			&-\left(\beta_i-\frac{3L}{2}\right)\|y_{i,k+1}-y_{i,k}\|^2.
		\end{align}
	\end{lem}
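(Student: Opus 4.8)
The plan is to combine the two one-step estimates already in hand — Lemma \ref{lem1} for the increment of $\Phi$ and Lemma \ref{lem2} for the increment of $f$ — through the identity $S(x_{i,k+1},y_{i,k+1}) - S(x_{i,k},y_{i,k}) = 2\big(\Phi(x_{i,k+1}) - \Phi(x_{i,k})\big) - \big(f(x_{i,k+1},y_{i,k+1}) - f(x_{i,k},y_{i,k})\big)$, which is immediate from $S(x,y) = 2\Phi(x) - f(x,y)$. First I would multiply the bound in Lemma \ref{lem1} by $2$ and subtract the bound in Lemma \ref{lem2}; since the latter is a lower bound, the subtraction flips it into an upper bound, and the two copies of the cross term $\langle \nabla_x f(x_{i,k},y_{i,k+1}), x_{i,k+1}-x_{i,k}\rangle$ coming from Lemma \ref{lem1} cancel the single such term from Lemma \ref{lem2}, leaving precisely one such inner product on the right-hand side.

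Next I would dispose of this residual inner product using the optimality (variational) condition for the $x$-step \eqref{update-xs}: since $x_{i,k+1} = \mathcal{P}_{\mathcal{X}}\big(x_{i,k} - \tfrac{1}{\alpha_i}\nabla_x f(x_{i,k},y_{i,k+1})\big)$, testing the projection inequality against $x_{i,k}$ yields $\langle \nabla_x f(x_{i,k},y_{i,k+1}), x_{i,k+1}-x_{i,k}\rangle \le -\alpha_i\|x_{i,k+1}-x_{i,k}\|^2$. Plugging this in and using $L_\Phi = L + \tfrac{L^2}{\mu}$, the coefficient multiplying $\|x_{i,k+1}-x_{i,k}\|^2$ becomes $-\alpha_i + a + L_\Phi + \tfrac{L}{2} = -\alpha_i + a + \tfrac{L^2}{\mu} + \tfrac{3L}{2}$ and the coefficient multiplying $\|y_{i,k+1}-y_{i,k}\|^2$ becomes $\tfrac{L^2(L+\beta_i)^2\eta_i^2}{a\beta_i^2} - \beta_i + \tfrac{L}{2}$, where $a>0$ is the free parameter inherited from Lemma \ref{lem1}.

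Finally I would select $a = \tfrac{L(L+\beta_i)^2\eta_i^2}{\beta_i^2}$. This single choice simultaneously turns the $\|x_{i,k+1}-x_{i,k}\|^2$ coefficient into exactly $-\big(\alpha_i - \tfrac{L(L+\beta_i)^2\eta_i^2}{\beta_i^2} - \tfrac{L^2}{\mu} - \tfrac{3L}{2}\big)$ and the $\|y_{i,k+1}-y_{i,k}\|^2$ coefficient into exactly $-\big(\beta_i - \tfrac{3L}{2}\big)$, which is exactly \eqref{lem3:1}. I do not expect a genuine obstacle: the argument is essentially bookkeeping, and the only point requiring a little care is noticing that the $\|y\|^2$ term produced by Lemma \ref{lem1} scales like $1/a$ while the surplus in the $\|x\|^2$ term scales like $a$, so that one value of $a$ drives both coefficients down to their target values at once.
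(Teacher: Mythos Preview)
Your proposal is correct and follows essentially the same route as the paper: combine $2\times$Lemma~\ref{lem1} minus Lemma~\ref{lem2} to reach the intermediate bound with one surviving cross term $\langle \nabla_x f(x_{i,k},y_{i,k+1}),x_{i,k+1}-x_{i,k}\rangle$, eliminate it via the projection optimality condition for \eqref{update-xs}, and then set $a=\tfrac{L(L+\beta_i)^2\eta_i^2}{\beta_i^2}$ together with $L_\Phi=L+\tfrac{L^2}{\mu}$ to obtain exactly the stated coefficients.
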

	\begin{proof}
		Denote 	$S(x,y)=2\Phi(x)-f(x,y)$. 
		Combining \eqref{lem1:iq1} and  \eqref{lem2:iq1}, we have
		\begin{align}\label{lem3:2}
			&S(x_{i,k+1},y_{i,k+1})-S( x_{i,k},y_{i,k})\nonumber \\
			\le&-\left(\beta_i-\frac{L}{2}-\frac{(L+\beta_i)^2\eta_i^2L^2}{a\beta_i^2}\right)\|y_{i,k+1}-y_{i,k}\|^2+\left(a+L_\Phi+\frac{L}{2}\right)\|x_{i,k+1}-x_{i,k}\|^2 \nonumber \\
			&+\langle  \nabla _x f( x_{i,k},y_{i,k+1}),x_{i,k+1}-x_{i,k}\rangle.
		\end{align}
		The optimality condition for $x_{i,k}$ in \eqref{update-xs} implies that $\forall x\in \mathcal{X}$ and $\forall k\geq 1$,
		\begin{equation}\label{lem3:4}
			\langle \nabla _xf(x_{i,k},y_{i,k+1}),x_{i,k+1}-x_{i,k} \rangle \le -\alpha_i\|x_{i,k+1}-x_{i,k}\|^2.
		\end{equation}
		Plugging \eqref{lem3:4} into \eqref{lem3:2}, then we obtain
		\begin{align}\label{lem3:6}
			&S(x_{i,k+1},y_{i,k+1})-S( x_{i,k},y_{i,k})\nonumber \\
			\le&-\left(\beta_i-\frac{L}{2}-\frac{L^2(L+\beta_i)^2\eta^2}{a\beta_i^2}\right)\|y_{i,k+1}-y_{i,k}\|^2 -\left(\alpha_i-a-L_\Phi-\frac{L}{2}\right)\|x_{i,k+1}-x_{i,k}\|^2.
		\end{align}
		The proof is then completed by the definiton of $L_\Phi=L+\frac{L^2}{\mu}$ and choosing $a=\frac{L(L+\beta_i)^2\eta_i^2}{\beta_i^2}$ in \eqref{lem3:6}.
	\end{proof}
	We are now ready to establish the iteration complexity for the {rPF-AGP-NSC}  algorithm. In particular, letting $\nabla G(x_{i,k}, y_{i,k})$ be defined as in Definition \ref{gap} and $\varepsilon > 0$ be a given target accuracy, we provide a bound on $T_i(\varepsilon)$, the  inner iteration index to achieve an $\varepsilon$-stationary point, i.e., $\|\nabla G(x_{i,k}, y_{i,k})\| \leq \varepsilon$, which is equivalent to
	\begin{equation}\label{r_alg_sp}
		T_i(\varepsilon) = \min\{k \mid \|\nabla G(x_{i,k}, y_{i,k})\| \leq \varepsilon\}.
	\end{equation}
	The following theorem gives an upper bound on $T_i(\varepsilon)$.  
	\begin{thm}\label{thm1}
		Suppose that Assumptions  \ref{as} hold. Let $\{\left(x_{i,k},y_{i,k}\right)\}$ be a sequence generated by Algorithm \ref{restart_al_nc_sc4}. Let $\eta_i=\frac{(2\beta_i+\mu)(\beta_i+L)}{\mu\beta_i}$. If
		\begin{align}\label{thm1:1}
			&\beta_i>\frac{3L}{2}, \alpha_i>\frac{L(L+\beta_i)^2\eta_i^2}{\beta_i^2}
			+\frac{L^2}{\mu}+\frac{3L}{2},
		\end{align}
		then $\forall \varepsilon>0$, we have
		$$ T_i\left( \varepsilon \right) \le \frac{d_2^i}{\varepsilon ^2 d^i_1},$$
		where $d^i_1:=\min\{{\frac{\alpha_i-\frac{L(L+\beta_i)^2\eta_i^2}{\beta_i^2}
				-\frac{L^2}{\mu}-\frac{3L}{2}}{2\alpha_i^2},\frac{ \beta_i-\frac{3L}{2}}{\beta_i^2+2L^{2}} \} }$, $d_2^i:=S(x_{i,1},y_{i,1})-\underbar{S}$ with $\underline{S} := \min\limits_{x\in\mathcal{X}}\max\limits_{y\in \mathcal{Y}}f(x,y)$.
	\end{thm}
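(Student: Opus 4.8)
The plan is to bound the stationarity gap $\nabla G(x_{i,k},y_{i,k})$ (interpreted as in Definition \ref{gap}, but with the stepsizes of Algorithm \ref{restart_al_nc_sc4}: $\alpha_i$ in the $x$-block and $\beta_i$ in the $y$-block) in terms of the consecutive iterate gaps $\|x_{i,k+1}-x_{i,k}\|$ and $\|y_{i,k+1}-y_{i,k}\|$, then to feed this into the one-step recursion for $S(x,y)=2\Phi(x)-f(x,y)$ established in Lemma \ref{lem3}, and finally to telescope.

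First I would read off directly from the $y$-update \eqref{update-ys} that $\|(\nabla G_k)_y\| = \beta_i\|y_{i,k+1}-y_{i,k}\|$, since $y_{i,k+1}$ is exactly the projected point appearing in the $y$-block of the gap (note the order of updates is reversed here relative to Algorithm \ref{al_nc_sc}, so it is now the $x$-block, not the $y$-block, that needs an extra term). For the $x$-block: the $x$-update \eqref{update-xs} uses $\nabla_x f(x_{i,k},y_{i,k+1})$ rather than $\nabla_x f(x_{i,k},y_{i,k})$, so I would insert $x_{i,k+1}=\mathcal{P}_\mathcal{X}(x_{i,k}-\tfrac{1}{\alpha_i}\nabla_x f(x_{i,k},y_{i,k+1}))$, apply the triangle inequality, nonexpansiveness of $\mathcal{P}_\mathcal{X}$, and the (cross-)Lipschitz bound $\|\nabla_x f(x_{i,k},y_{i,k+1})-\nabla_x f(x_{i,k},y_{i,k})\|\le L\|y_{i,k+1}-y_{i,k}\|$ to get $\|(\nabla G_k)_x\|\le \alpha_i\|x_{i,k+1}-x_{i,k}\| + L\|y_{i,k+1}-y_{i,k}\|$. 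Combining the two blocks with Cauchy--Schwarz then yields
\[
\|\nabla G(x_{i,k},y_{i,k})\|^2 \le 2\alpha_i^2\|x_{i,k+1}-x_{i,k}\|^2 + (\beta_i^2+2L^2)\|y_{i,k+1}-y_{i,k}\|^2.
\]

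Next I would note that condition \eqref{thm1:1} is precisely what makes both coefficients on the right-hand side of Lemma \ref{lem3} strictly positive, and that $d_1^i$ as defined in the statement is at most each of $\big(\alpha_i-\tfrac{L(L+\beta_i)^2\eta_i^2}{\beta_i^2}-\tfrac{L^2}{\mu}-\tfrac{3L}{2}\big)/(2\alpha_i^2)$ and $\big(\beta_i-\tfrac{3L}{2}\big)/(\beta_i^2+2L^2)$. Multiplying the displayed bound by $d_1^i$ and invoking Lemma \ref{lem3} gives $d_1^i\|\nabla G(x_{i,k},y_{i,k})\|^2 \le S(x_{i,k},y_{i,k}) - S(x_{i,k+1},y_{i,k+1})$ for every $k\ge 1$. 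To finish, set $T=T_i(\varepsilon)$; if $T=1$ the claim is trivial, otherwise $\|\nabla G(x_{i,k},y_{i,k})\|>\varepsilon$ for $k=1,\dots,T-1$, and telescoping over these indices yields $(T-1)d_1^i\varepsilon^2 \le S(x_{i,1},y_{i,1}) - S(x_{i,T},y_{i,T})$. Then I would use $S(x,y)=\Phi(x)+(\Phi(x)-f(x,y))\ge\Phi(x)\ge\underline{S}$ to bound the right-hand side by $S(x_{i,1},y_{i,1})-\underline{S}=d_2^i$, giving $T_i(\varepsilon)\le d_2^i/(d_1^i\varepsilon^2)+1$, which is the asserted bound (the statement drops the harmless additive constant, as also done in the main text's Theorem \ref{th1}).

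The main obstacle I anticipate is the $x$-block estimate of the gap: because Algorithm \ref{restart_al_nc_sc4} evaluates $\nabla_x f$ at the updated $y_{i,k+1}$, a term proportional to $\|y_{i,k+1}-y_{i,k}\|$ must be absorbed, which is exactly why the $y$-coefficient of $d_1^i$ carries the extra $2L^2$ and why the Lyapunov-type recursion of Lemma \ref{lem3} — built to dominate such $\|y_{i,k+1}-y_{i,k}\|^2$ terms through the $\big(\beta_i-\tfrac{3L}{2}\big)$ factor — is indispensable. Verifying that \eqref{thm1:1} matches the positivity requirements of both coefficients in $d_1^i$, and that $\eta_i$ (with the true $L$) is the correct error-bound constant invoked in Lemma \ref{lem3}, is the remaining routine check.
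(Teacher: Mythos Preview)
Your proposal is correct and follows essentially the same route as the paper's own proof: bound the two blocks of $\nabla G(x_{i,k},y_{i,k})$ via \eqref{update-ys}, \eqref{update-xs}, nonexpansiveness and the cross-Lipschitz estimate to obtain $\|\nabla G_k\|^2\le 2\alpha_i^2\|x_{i,k+1}-x_{i,k}\|^2+(\beta_i^2+2L^2)\|y_{i,k+1}-y_{i,k}\|^2$, multiply by $d_1^i$, invoke Lemma \ref{lem3}, telescope, and lower-bound $S$ by $\underline{S}$. Your handling of the additive $+1$ is in fact slightly more careful than the paper's, which simply asserts $\varepsilon^2\le d_2^i/(T_i(\varepsilon)d_1^i)$ directly.
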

	\begin{proof}
		By \eqref{update-ys}, we immediately obtain
		\begin{equation}\label{lem4:2}
			\left\| \beta_i\left( y_{i,k}-\mathcal{P}_{\mathcal{Y}}\left( y_{i,k}+ \frac{1}{\beta_i} \nabla _yf\left( x_{i,k}, y_{i,k},\right) \right) \right) \right\|=\beta_i\|y_{i,k+1}-y_{i,k}\|.
		\end{equation}
		On the other hand, by \eqref{update-xs} and the Cauchy-Schwartz inequality, we have
		\begin{align}\label{lem4:3}
			&\|	\alpha_i ( x_{i,k}-\mathcal{P}_{\mathcal{Y}}( x_{i,k}-\frac{1}{\alpha_i}\nabla _xf\left( x_{i,k}, y_{i,k} \right) ) )\|
			\leq  \alpha_i\|x_{i,k+1}-x_{i,k}\| +L\|y_{i,k+1}-y_{i,k}\|,
		\end{align}
		where the last inequality is by the nonexpansive property of the projection operator and Assumption \ref{as}. 
		Combing \eqref{lem4:2}, \eqref{lem4:3}, and using Cauchy-Schwarz inequality, we have
		\begin{align}\label{lem4:1}
			\|\nabla G( x_{i,k}, y_{i,k})\|^2
			\le (\beta_i^2+2L^{2})\|y_{i,k+1}-y_{i,k}\|^2+2\alpha_i^2\|x_{i,k+1}-x_{i,k}\|^2.
		\end{align}
		By \eqref{thm1:1}, it can be easily checked that $d^i_1>0$. By multiplying $d^i_1$ on the both sides of \eqref{lem4:1} and using \eqref{lem3:1} in Lemma \ref{lem3}, we get
		\begin{align}
			d^i_1\|\nabla G( x_{i,k}, y_{i,k})\|^2
			\le S(x_{i,k},y_{i,k})-S(x_{i,k+1},y_{i,k+1}).\label{thm1:5}
		\end{align}
		Summing both sides of \eqref{thm1:5} from $k=1$ to $T_i(\varepsilon)$, we then obtain
		\begin{align}
			&\sum_{k=1}^{T_i\left( \varepsilon \right)}{d^i_1\|\nabla G( x_{i,k}, y_{i,k}) \| ^2}
			\le S(x_{i,1},y_{i,1})-S(x_{i,T_i(\varepsilon)+1},y_{i,T_i(\varepsilon)+1}).\label{thm1:6}
		\end{align}
		Then, by \eqref{thm1:6} we obtain
		$
		\sum_{k=1}^{T_i\left( \varepsilon \right)}{d^i_1\|\nabla G( x_{i,k}, y_{i,k}) \| ^2}\le S(x_{i,1},y_{i,1})-\underbar{S}=d_2^i.
		$
		In view of the definition of $T_i(\varepsilon)$, the above inequality implies that $\varepsilon ^2\le \frac{d_2^i}{T_i( \varepsilon )d^i_1}$ or equivalently, $T_i\left( \varepsilon \right) \le \frac{d_2^i}{\varepsilon ^2 d^i_1}$. This completes  the proof. 
	\end{proof}
	\begin{remark}
		Let $ \beta_i = 3l_i, \alpha_i = \frac{158l_i^3}{\mu^2}$, then the outer iteration of the rPF-AGP-NSC algorithm requires at most $\mathcal{O}(\log(L))$ steps, so that $ \beta_i$ and $\alpha_i$ satisfy \eqref{thm1:1}. According to Theorem \ref{thm1}, the inner iteration of $\|\nabla G(x_{i,k}, y_{i,k})\| \leq \varepsilon$ requires at most $\frac{d_2^i}{\varepsilon ^2 d^i_1}$ steps.
	\end{remark}
	\begin{remark}
		Denote $\kappa=L/\mu$. If we set $\beta_i=3l_i$, by Theorem \ref{thm1} we have that $\eta_i=\mathcal{O}(\kappa)$, $d_1^i=\mathcal{O}(\frac{1}{L\kappa^2})$ and $d_2^i = O(1)$, which implies the total number of gradient calls  is upper bounded by $\sum\limits_{i=1}^{\log_2(L)+1} T_i (\varepsilon)=\mathcal{O}(\log(L)L\kappa^2\varepsilon^{-2})$ under the nonconvex-strongly concave setting. It means that if two additional parameters $\mu$ and $\underline{S}$ are known in advance, the gradient complexity of the rPF-AGP-NSC algorithm is improved to $\mathcal{O}\left(\log(L) L\kappa^2\varepsilon^{-2}\right)$, which is better than that of the SGDA-B and the PF-AGP-NSC algorithm. 
	\end{remark}

	\newpage
	
	\appendix
	
	
	
	
	
	
	

\end{document}